\numberwithin{equation}{section}
\title[Balanced metrics on Calabi-Yau threefolds]{Balanced metrics on
non-K\"ahler Calabi-Yau threefolds}
\author{Jixiang Fu}
\author{Jun Li}
\author{Shing-Tung Yau}
\address{Institute of Mathematics\\ Fudan University \\ Shanghai 200433\\
China}
 \email{majxfu@fudan.edu.cn}
\address{Department of Mathematics, Stanford University\\ Stanford, CA94305\\ USA}
\email{jli@math.stanford.edu}
\address{Department of Mathematics\\ Harvard University\\ Cambridge,
MA 02138\\ USA} \email{yau@math.harvard.edu}
\date{}
\newtheorem{prop}{Proposition}[section]
\newtheorem{theo}[prop]{Theorem}
\newtheorem{lemm}[prop]{Lemma}
\newtheorem{coro}[prop]{Corollary}
\newtheorem{rema}[prop]{Remark}
\newtheorem{defi}[prop]{Definition}
\newtheorem{prob}[prop]{Question}
\DeclareMathOperator{\Aut}{Aut}
\def\neib{{\mathcal N}}
\def\cO{{\mathcal O}}
\def\cX{{\mathcal X}}
\def\bE{{\mathbf E}}
\def\bt{{\mathbf t}}
\def\bs{{\mathbf s}}
\def\bU{{\mathbf U}}
\def\brr{{\mathbf r}}
\def\ZZ{{\mathbb Z}}
\def\RR{{\mathbb R}}
\def\CC{{\mathbb C}}
\def\and{\quad{\rm and}\quad}
\def\bl{\bigl(}
\def\br{\bigr)}
\def\bs{{\mathbf s}}
\def\bt{{\mathbf t}}
\def\dbar{\bar\partial}
\def\defeq{\triangleq}
\def\eps{\epsilon}
\let\lra=\longrightarrow
\def\pri{^{\prime}}
\def\Po{{\mathbf P}^1}
\def\sub{\subset}
\def\sta{^{\ast}}
\def\upmo{^{-1}}
\def\sm{\mathrm{sm}}
\def\norml{\parallel\!}
\def\normr{\!\parallel}
\def\lab#1{\label{#1}[{#1}]\  }
\def\vsp{\vskip3pt}
\def\sumt{^{\oplus 2}}
\def\dbar{\bar\partial}
\def\neib{U}
\def\bbr{\mathbf{r}}
\def\ba{{\mathbf a}}
\def\bb{{\mathbf b}}
\def\va{\vskip4pt}
\def\ha{\frac{1}{2}}
\def\lab{\label}
\begin{document}

\maketitle

\begin{abstract}
We construct balanced metrics on the family of non-K\"ahler
Calabi-Yau threefolds that are obtained by smoothing after
contracting $(-1,-1)$-rational curves on  a  K\"ahler
Calabi-Yau threefold. \black As an application, we construct
balanced metrics on complex manifolds diffeomorphic to  the
connected
  sum \black of $k\geq 2$ copies of $S^3\times S^3$.
\end{abstract}

\section{Introduction}

We construct balanced metrics on the class of complex threefolds
that are  obtained by \black conifold transitions of K\"ahler
Calabi-Yau threefolds; this class includes complex structures on
 the \black connected sum of $k\ge 2$ copies of $S^3\times
S^3$.

A central problem in studying  compact  complex manifolds is to find
special hermitian metrics on them. (All complex manifolds in this
paper are compact, unless otherwise stated.) The most distinguished
class of metrics on complex manifolds are K\"ahler metrics. A
K\"ahler metric is a hermitian metric whose hermitian form $\omega$
satisfies $d\omega=0$. K\"ahler metrics offer many advantages: their
 (hermitian) \black connections are torsionless; their $d$,
$\partial$ and $\dbar$-harmonic forms coincide, which lead to the
Hodge structure on their cohomology groups. The drawback is that
many important complex manifolds do not admit K\"ahler metrics.

In search for a wider class of special metrics on an $n$-dimensional
complex manifold $X$, since the vanishing
$d\omega^k=0$ automatically yields
$d\omega=0$ when $2\le k\le n-2$ (see
\cite{GH}), the only weaker condition along this line is the balanced
condition
$$d\omega^{n-1}=0.
$$
(One can generalize the K\"ahler condition along other directions,
like the pluriclosed metric: $\partial\dbar\omega=0$. In this paper,
we concentrate on  balanced metrics.)

The balanced metrics on $X$ (an $n$-dimensional complex manifold)
form an important class of hermitian metrics. First, the form
$\omega^{n-1}$ defines a cohomology class in $H^{2n-2}(X,\RR)$, thus
can be used to define the degree of vector bundles on $X$; the
balanced metrics also occur as part of the Strominger system, a
system that generalizes the complex Monge-Ampere equations and
hermitian-Yang-Mills equations (see \cite{Str}). Paired with
cohomologically \black K\"ahler requirement on the manifold $X$,
(i.e. the validity of the $\partial\dbar$-Lemma on $X$), we expect
 that a \black  balanced metric would yield properties
resembling that of a K\"ahler metric.

The existence of balanced metrics is also more robust than that of
K\"ahler metrics; more so when the base manifold is cohomologically
K\"ahler. For a pair of birational complex manifolds, Alessandrini
and Bassanelli \cite{AB2,AB3} proved that one admits balanced
metrics if the other admits balanced metrics; when $X$ is
cohomologically K\"ahler and has balanced metrics, then small
deformations of the complex structure of $X$ is also cohomologically
K\"ahler \cite{Voi,Wu} and admits balanced metrics \cite{Wu}.

This leads to the natural question whether balanced metrics are
preserved under singular transitions of the underlying manifold. A
singular transition of a complex manifold $Y$ is a contraction $Y\to
X_0$ followed by a smoothing $X_0 \rightsquigarrow X_t$, (i.e. $X_t$
are small deformations of $X_0$ such that $X_t$ are smooth for
general $t$.) The simplest such case is the conifold transition:
%

\begin{defi}\label{coni-def}
A conifold transition consists of a smooth compact threefold $Y$, a
holomorphic map to a singular complex space $\pi: Y\to X_0$ and an
analytic family of complex spaces $X_t$, $t\in\Delta\sub \CC$, such
that
\begin{enumerate}
\item
$X_0$ is compact and smooth away from a finite set $\Lambda=\{p_1,\cdots,p_\ell\}$;
\item $\pi\upmo(p_i)\defeq E_i$ are $(-1,-1)$-curves; i.e., they are smooth rational curves, and
the normal bundles $N_{E_i/Y}$ are isomorphic to $\cO_{E_i}(-1)^{\oplus 2}$;
\item
$\pi|_{Y-\pi\upmo(\Lambda)}: Y-\pi\upmo(\Lambda)\to X_0-\Lambda$
is a biholomorphism;
\item $X_t$ are compact smooth complex manifolds for $t\ne 0$.
\end{enumerate}
\end{defi}

In this paper, we prove the existence of balanced metrics under
conifold transitions.

\begin{theo}
Let $Y$ be a smooth K\"ahler Calabi-Yau threefold and let $Y\to
X_0\rightsquigarrow X_t$ be a conifold transition.
Then for sufficiently small $t$, $X_t$ admits smooth balanced metrics.
\end{theo}

Here a smooth Calabi-Yau threefold is a three dimensional complex
manifold with finite fundamental group and trivial canonical line
bundle. There are plenty of conifold transitions of K\"ahler
Calabi-Yau threefolds. Given such a threefold $Y$,   let $\bf
E$  be a union of mutually disjoint $(-1,-1)$-curves $E_i$. \black
By contracting  $\bf E$, \black we obtain a singular complex
space $X_0$. When the homology classes $[E_i]\in H_2(Y,\ZZ)$
satisfies the criterion of Friedman \cite{Fri1,Fri2}, $X_0$ can be
smoothed to a family of Calabi-Yau threefolds $X_t$. The theorem
states that for sufficiently small $t$, all $X_t$ have balanced
metrics.

The connected sum $\#_k (S^3\times S^3)$ of $k\ge
2$ copies of $S^3\times S^3$ can be given a complex structure in
this way \cite{Fri2,LT}. As a corollary of the Theorem,

\begin{coro}\label{cor1.3}
The complex structures on $\#_k (S^3\times S^3)$ for any $k\geq 2$
constructed from the conifold transitions admit balanced metrics.
\end{coro}

On the other hand, according to Lemma 2 in \cite{Bo1}, any
pluriclosed metric $\omega$ on $\#_k(S^3\times S^3)$ can be written
as $\omega=\partial\bar\phi+\bar\partial\phi$ for a $(1,0)$-form
$\phi$. We claim that in this case there is no balanced metric on
it. Otherwise, a balanced metric $\tilde\omega$ would give
$0<\int_{\#_k(S^3\times S^3)}\omega\wedge\tilde\omega^{n-1}=0$, a
contradiction.

Combining Corollary \ref{cor1.3} and the above discussion, we prove
a result stated in \cite{Bo1}. (The proof of this statement in
\cite{Bo1} was incomplete; the reason given in \cite{Bo1} for $T=0$
is insufficient.)

\begin{coro}
There exists no pluriclosed metric on the complex structures on $\#_k(S^3\times S^3)$ for any $k\geq 2$ constructed from the conifold transition.
\end{coro}

This shows that the balanced metrics are the {\sl only} known special  hermitian metrics
on these manifolds (c.f. \cite{Bo1}). We add that in \cite{Bo2} it is proved that their holomorphic tangent bundles are stable with respect to any Gaudchon metric.
\vsp


We believe that the theorem will play an important role in
investigating the geometry of Calabi-Yau threefolds within the
framework of Reid's conjecture. To shed lights on the immense
collection of diverse Calabi-Yau threefolds, Reid conjectured that
all Calabi-Yau threefolds are connected by deformations and singular
transitions \cite{Reid}.  The current work is a step to study
Calabi-Yau threefolds in the framework of metric geometry along
Reid's conjecture.

Our proof of the Theorem is partially constructive in that we
construct balanced metrics $g_t$ on $X_t$ with prescribed limiting
behavior near the singularities of $X_0$. This helps to investigate
the solutions to the Strominger system of supersymmetry with torsion
under  the  conifold transition. Recall that the Strominger system
is an elliptic system on a pair $(g,h)$ of a hermitian metric $g$ on
a Calabi-Yau threefold $Y$ and a hermitian metric $h$ on a vector
bundle $V$ on $Y$ (c.f. \cite{Str, LY1, FY, BBFTY,
FTY}). This system includes an equation on the hermitian form
$\omega$ of $g$:
\begin{equation*}\label{104} d\sta
\omega=\sqrt{-1}(\dbar-\partial)\ln\|\Omega\|_\omega,
\end{equation*}
which is equivalent to the balanced condition \cite{LY1}:
$$d(\parallel\!\Omega\!\parallel_\omega\omega^2)=0.
$$
(Here $\Omega$ is a holomorphic $3$-form of the Calabi-Yau
threefold.) We hope that the solutions to the  Strominger system for
$Y$ can be prolonged through conifold transitions. One can also consult the discussion on this point from CFT in \cite{Ad}.

We add that there are explicit existence results on balanced
metrics. Goldstein and Prokushkin \cite{GP} constructed balanced
metrics on torus bundles over $K3$ surfaces and {over} complex
abelian surfaces (cf. \cite{DRS} and \cite{BBDG}). Later, D.
Grantcharov, G. Grantcharov and Poon \cite{GGP} constructed CYT
structures on torus bundles over more general compact K\"ahler
surfaces; as a consequence they constructed CYT structures on
complex manifolds of topological type $(k-1)(S^2\times S^4)\#
k(S^3\times S^3)$ for $k\geq 1$. However, the canonical line
bundles of these complex manifolds are non-trivial.  Note that for
compact complex manifolds with trivial canonical line bundles, the
existence of CYT structures is equivalent to the existence of
balanced metrics \cite{LYZ}. Along this line, our construction
provides  CYT structures on a large \black class of threefolds,
including those of types $\#_{k\ge 2}( S^3\times S^3)$. \vsp

We now outline the proof of the theorem. Our first step is to modify
a  K\"ahler metric on $Y$ near the contracted curves $E_i$ to get a
balanced metric $\omega_0$ on the contraction $X_0$ so that near the
singularities of $X_0$ the metric $\omega_0$ coincides with the K\"ahler
Ricci-flat metric of Candelas-de la Ossa's (see \cite{CO}).

After this, we deform $\omega_0$ to
smooth almost balanced hermitian metrics $\omega_t$ on $X_t$ so that
they are K\"ahler and Ricci-flat near the singular points of $X_0$.
We achieve the Ricci-flatness by using the deformation of
Candelas-de la Ossa's metric on the cone singularity to smooth
Ricci-flat metrics on the smoothing of the cone singularity.

To get balanced metrics, we consider perturbation
$\omega_t^2+\theta_t+\bar\theta_t$,
with $\theta_t=i\partial \mu_t$ for $\mu_t$ a $(1,2)$-form on
$X_t$ solving
\begin{equation*}
i\partial_t\dbar_t\mu_t=\dbar_t\omega_t^2 \quad\text{subject
to}\quad  \mu_t\perp_{\omega_t}\ker
\partial_t\dbar_t.
\end{equation*}
This way, $d(\omega_t^2+\theta_t+\bar\theta_t)=0$ automatically. We
then prove that the $C^0$-norms
$\parallel\!\!\theta_t\!\!\parallel_{C^0,\omega_t}\to 0$ as $t\to
0$. Thus $\omega_t^2+\theta_t+\bar\theta_t$ is positive definite for
small $t$; $(\tilde\omega_t)^2=\omega_t^2+\theta_t+\bar\theta_t$ is
solvable, and $\tilde\omega_t$ is a family of balanced metrics on
$X_t$.

The technical part is to control  the norms
$\parallel\!\!\theta_t\!\!\parallel_{C^0,\omega_t}$. To this end, we
choose $\gamma_t$ to be the solution to the Kodaira-Spencer equation \cite{KS}
$E_t(\gamma_t)=\dbar\omega_t^2$ subject to $\gamma_t\perp_{\omega_t}
\ker E_t$. The solution $\gamma_t$ satisfies $\partial_t\gamma_t=0$
and $\mu_t=-i\dbar_t^\ast\partial_t^\ast\gamma_t$. Applying the
elliptic estimates, the $L^2$-estimates and the vanishing
theorem \black of $L^2$-cohomology groups, we prove that
$$\lim_{t\to0}|t|^{\kappa}\cdot\parallel\!\!\theta_t\!\!\parallel^2_{C^0,\omega_t}=0\quad\text{for}
\quad \kappa>-\frac 4 3;
$$
this is more than enough to get the desired bound on $
\parallel\!\!\theta_t\!\!\parallel_{C^0,\omega_t}$.
Section 3 and 4 are devoted to
prove this estimate. \vsp

The above construction of the family of hermitian metrics
$\omega_t$ and the estimate on the perturbation terms $\theta_t$
provide a precise control on the local behavior of the metrics
$\tilde\omega_t$ near the singularities of $X_0$. Such information
will be useful in the further study of the geometry of $X_t$. For
instance, using this M.-T. Chuan \cite{Chuan} has proved certian existence of
Hermitian-Yang-Mills metrics on bundles over $X_t$.

It is worthwhile to compare this approach with a possible approach using
Michelsohn's existence criterion of balanced metrics \cite{Mic}. Let
$Y\to X_0\rightsquigarrow X_t$ be a conifold  transition of the
\black Calabi-Yau threefold and suppose $Y$ is cohomologically
K\"ahler and has balanced metrics. In case $X_{t}$ does not have
balanced metrics, by Michelsohn's criterion we find  a non-zero
positive $(1,1)$-current $T_t$ on $X_t$ of the form
$T_t=\bar\partial S_{t}+\partial \bar S_{t}$ with $(1,0)$-current
$S_t$. Suppose  $X_{t_k}$ has no balanced metrics for a sequence
$t_k\to 0$,  then after normalization and passing to a subsequence,
we find a non-zero positive $\partial\bar\partial$-closed
$(1,1)$-current $T_0$ on $Y-E$ that is a weak limit of the $T_{t_k}$
mentioned.  If we can show that $T_0$ extends to a non-zero positive
current $\tilde T_0$ on $Y$ such that $\tilde T_0=\partial \bar
S+\bar\partial S$ for a $(1,0)$-current $S$ on $Y$, we obtain a
contradiction by applying Michelsohn's criterion to our assumption
that $Y$ has balanced metrics.

The extension is guaranteed if {\sl $T_0(\Phi)=0$ for any $d$-closed $(2,2)$-form $\Phi$ on
$Y-E$ with compact support.} One possible approach to such proof is to establish estimates
on a family of $(2,2)$-forms $\theta_t'$ (similar to the $\theta_t$ mentioned before) on $X_t$:
$$\parallel\!\!\theta_t'\!\!\parallel_{C^0,\omega_t'}\to 0\quad\text{as}\quad t\to 0.
$$
Here $\omega'_t$ are the hermitian metrics on $X_t$ that are the
restriction to $X_t$ of a smooth hermitian metric on
$\cX=\coprod_{t}X_t$. (Note for conifold transitions, $\cX$ is a
smooth, non-compact four-fold). At the moment we are unable to prove
this estimate. Though this is weaker than the estimate
$t^{\kappa}\cdot\parallel\!\!\theta_t\!\!\parallel^2_{C^0,\omega_t}\to
0$ mentioned earlier, we can prove the stronger estimate because we
use essentially   the Ricci-flatness \black of $\omega_t$ near
the singularities of $X_0$.

We hope that a refined version of this suggested approach will be useful to attack the
question on balanced metrics via singular transitions.

\begin{prob}
Let $Y$ be a compact cohomologically K\"ahler complex manifold and let
$Y\to X_0\rightsquigarrow X_t$ be a singular transition. Suppose $Y$
has a balanced metric. Does $X_t$ admit balanced metrics for
sufficiently small $t$?
\end{prob}

It will be too optimistic to believe that this question has an
affirmative answer in general. The case of threefolds (or Calabi-Yau
threefolds) holds more hope. Our theorem is the first step toward
answering this question. A more detailed understanding of this
question will be important to the metric geometry of threefolds.

\vsp

{\bf Acknowledgement.} The authors would like to thank  P.-F. Guan,
J.-X. Hong, Q.-C. Ji, L. Saper, V. Tossati and Y.-L. Xin for useful
discussions. The first named author is supported partially by NSFC
grants 10771037 and 10831008; the second named author is supported
partially by NSF grant NSF-0601002, and third named author is
supported partially by NSF grants DMS-0306600 and PHY-0714648.

\section{Balanced metrics with conifold singularity}
\def\co{\mathrm{co}}


Let $(Y,\omega)$ be a K\"ahler threefold. Let $E\sub Y$ be a
$(-1,-1)$-curve. By contracting $E$ we obtain a variety $X_0$ with
ordinary double point singularity. In this section, by modifying the
$4$-form $\omega^2$ we construct a balanced metric $\omega_0^2$ on
$Y- E$ that coincides with Candelas-de la Ossa's cone Ricci-flat
metric near the singular point of $X_0$.

We begin with setting up the convention for the geometry of $Y$ near the $(-1,-1)$-curve $E$.
We let $L$ be the degree $-1$ line bundle on
$E$; we pick a
neighborhood $U$ of $E$ in $Y$ that is biholomorphic to a disk
bundle in $L\sumt$.

To give coordinates to $U$, we fix an isomorphism $E\cong \Po$, pick an $\infty\in E$ and
let $z\in E-\infty=\CC$ be the standard coordinates of $\CC$. Using
$L\sumt|_{E-\infty}\equiv \CC_{E-\infty}\sumt$, and taking $e_1$ and
$e_2$ be the standard basis of $\CC_{E-\infty}\sumt$,  we give
$L\sumt|_{E-\infty}$ the coordinates $(z,u,v)$, meaning the point
$ue_1+ve_2$ over $z\in E-\infty$.

We let $\bbr\ge 0$ be the function
\begin{equation}\lab{bbr}
\bbr(z,u,v)^2=(1+\mid z\mid ^2)(\mid u\mid ^2+\mid v\mid
^2).
\end{equation}
A direct check shows that this function
extends to a smooth hermitian metric of $L\sumt$. Using $\bbr$, we agree that $U\sub Y$ (containing $E$)
is biholomorphic to
the open unit disk in $L\sumt$. For $1\geq c>0$, we let
\begin{equation}\lab{U}
\neib(c)=\{(z,u,v)\in\neib\mid  \bbr(z,u,v)< c\}\sub U(1)=U.
\end{equation}
As $U\sub Y$ is viewed as an open neighborhood of $E\sub Y$, using the above inclusion,
$U(c)$ for $0<c<1$ are open neighborhoods of $E\sub Y$ as well.

\vsp

We recall Candelas-de la Ossa's metric on $\neib$. To make the
forthcoming manipulation more tractable, since both $L\sumt$ and
$\bbr^2$  are invariant under the transitive group $G=U(2)\le
\Aut(E)$,  to study the $G$-invariant property we only need to work
out its restriction to $z=0$ in $E$.

Using \eqref{bbr} and the convention \eqref{U}, we view $\bbr$
as a function on $U\sub Y$. We consider
\begin{eqnarray*}
\begin{aligned}
i\partial\bar\partial \bbr^2=&i(\mid u\mid ^2+\mid v\mid
^2)\,dz\wedge d\bar
z+i(1+\mid z\mid ^2)(du\wedge d\bar u+dv\wedge d\bar v)\\
&\qquad +iz\bar u \, du\wedge d\bar z+i\bar z u\,dz\wedge d \bar u+iz\bar
v\,dv\wedge d\bar z+iv\bar z \,dz\wedge d\bar v.
\end{aligned}
\end{eqnarray*}
Restricting to $0$, and introducing
\begin{equation*}
\lambda_1=dz,\ \ \lambda_2=\frac{\bar u du+\bar vdv}{\sqrt{\mid
u\mid ^2+\mid v\mid ^2}},\ \  \lambda_3=\frac{v du-udv}{\sqrt{\mid
u\mid ^2+\mid v\mid ^2}}\and \lambda_{k\bar
l}=i\lambda_k\wedge\lambda_{\bar l}\,,
\end{equation*}
we obtain
\begin{equation}\lab{201}
i\partial\bar\partial \bbr^2|_{z=0}= \bbr^2\lambda_{1\bar
1}+\lambda_{2\bar 2}+ \lambda_{3\bar 3}.
\end{equation}
For the same reason, $i\partial \bbr^2\wedge \bar\partial \bbr^2$ is
also $G$-invariant, and has the form
\begin{equation}\lab{202}
i\partial \bbr^2\wedge \bar\partial
\bbr^2|_{z=0}=\bbr^2\lambda_{2\bar 2}.
\end{equation}

\begin{defi}\lab{co0}
Let $f_0=\frac{3}{2} (\bbr^2)^{\frac 2 3}$. The two-form
$i\partial\bar\partial f_0$ is the K\"ahler  form of Candelas-de la Ossa's metric on $\neib\setminus E$. It is $G$-invariant.
\end{defi}

We denote this metric by $\omega_{\co,0}$; call it the CO-metric. In explicit form,
\begin{equation}\lab{203}
\omega_{\co,0}|_{z=0}=(\bbr^2)^{\frac 2 3}\lambda_{1\bar
1}+2/3\ (\bbr^2)^{-\frac 1 3}\lambda_{2\bar 2}+(\bbr^2)^{-\frac 1
3}\lambda_{3\bar 3}.
\end{equation}

\vskip3pt

Our next step is to modify $\omega$ using the CO-metric near $E$. For this, we need to
select a cut off function $\chi(s)$.

\begin{lemm} \lab{lem1} There is a constant $C_1$ such that for  any  sufficiently large $n$,
we can find a smooth function $\chi: [0,\infty)\to\RR$ such that
\begin{enumerate}
\item $\chi(s)=s$ when $s\in [0,2^{\frac 4 3}]$;
\item $\chi'(s)\geq
-C_1n^{-\frac {11} {3}}$ and $2\chi'(s)+s\chi''(s)\geq -C_1n^{-\frac
{11}{3}}$ when $s\in [2^{\frac
4 3},(n-1)^{\frac  4 3}]$;
\item $\chi'(s)\geq -C_1n^{-\frac 5 3}$ and
$2\chi'(s)+s\chi''(s)\geq -C_1n^{-\frac 5 3}$ when $s\in [(n-1)^{\frac 4 3},n^{\frac 4 3}]$;
\item $\chi$ is constant when $s\geq
n^{\frac 4 3}$.
\end{enumerate}
\end{lemm}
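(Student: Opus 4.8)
The plan is to construct $\chi$ explicitly by prescribing its derivative $\chi'$ as a carefully chosen bump-type function, then integrating. Write $s_1 = 2^{4/3}$, $s_2 = (n-1)^{4/3}$, and $s_3 = n^{4/3}$. On $[0,s_1]$ we must have $\chi(s)=s$, so $\chi'\equiv 1$ there; on $[s_3,\infty)$ we need $\chi'\equiv 0$. The task is to interpolate on $[s_1,s_3]$ so that $\chi'$ drops from $1$ to $0$ while keeping both $\chi'(s)$ and the quantity $2\chi'(s)+s\chi''(s)$ bounded below by a small multiple of $n^{-11/3}$ on the long interval $[s_1,s_2]$ and by a (larger, since the window is shorter) small multiple of $n^{-5/3}$ on the short interval $[s_2,s_3]$. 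The key observation is that $2\chi'(s)+s\chi''(s) = s^{-1}\frac{d}{ds}\bigl(s^2\chi'(s)\bigr)$, so if we set $g(s) = s^2\chi'(s)$ then the second condition reads $g'(s)\geq -C_1 n^{-11/3}\,s$ (resp. $-C_1 n^{-5/3}\,s$), i.e.\ $g$ is \emph{almost nondecreasing}, while the first condition is $g(s)\geq -C_1 n^{-11/3}\,s^2$ (resp.\ $-C_1 n^{-5/3}s^2$), essentially automatic once $\chi'$ stays close to $0$ from below.

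Concretely, I would choose a smooth nonincreasing profile $\phi:[0,\infty)\to[0,1]$ with $\phi\equiv 1$ near $0$ and $\phi\equiv 0$ for argument $\geq 1$, and a small parameter: set
\begin{equation*}
\chi'(s) = \phi\!\left(\frac{s-s_1}{s_2-s_1}\right)\quad\text{on }[s_1,s_2],
\end{equation*}
which decreases monotonically from $1$ to $0$, so $\chi'\geq 0 \geq -C_1 n^{-11/3}$ trivially. Since $s_2 - s_1 \sim n^{4/3}$, we get $|\chi''|\lesssim n^{-4/3}$ on $[s_1,s_2]$, hence $s\chi''(s)\gtrsim -s_2\cdot n^{-4/3}\sim -n^{4/3}\cdot n^{-4/3}$, which is only $O(1)$ — too weak. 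To fix this, I instead do \emph{not} let $\chi'$ reach $0$ on $[s_1,s_2]$: I let $\chi'$ decrease from $1$ only down to some value $\delta_n$ on $[s_1,s_2]$ and then down to $0$ on the short interval $[s_2,s_3]$. The point is that on $[s_1,s_2]$ we may take $\chi''$ of order $n^{-4/3}$ but \emph{nonnegative in the relevant combination}: choosing $\chi'$ convex and decreasing with $\chi''\geq 0$ makes $2\chi'+s\chi''\geq 2\chi'\geq 0$ wherever $\chi''\geq 0$. One cannot have $\chi'$ globally convex and nonincreasing while also eventually constant, but on $[s_1,s_2]$ we can arrange $\chi'' \geq -c\,n^{-11/3}$ by spreading the concave part thinly: where $\chi''<0$ it is bounded by $n^{-11/3}$ in absolute value (this is where the total drop in $\chi'$, which is $O(1)$, gets distributed over a region that, multiplied by $s\leq s_2\sim n^{4/3}$, contributes $n^{4/3}\cdot n^{-11/3}=n^{-7/3}\to 0$, well within $-C_1 n^{-11/3}$ after renaming constants only if we are careful — so actually we localize the concavity to near $s=s_1$ where $s\leq 2s_1=O(1)$, making $s\chi''$ itself $O(n^{-11/3})$). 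On $[s_2,s_3]$, of length $\sim n^{1/3}$, $\chi'$ must fall from $\delta_n$ to $0$, giving $|\chi''|\sim \delta_n n^{-1/3}$ and $s|\chi''|\sim \delta_n n^{4/3}n^{-1/3}=\delta_n n$; demanding this be $\lesssim n^{-5/3}$ forces $\delta_n\lesssim n^{-8/3}$, which is compatible with the drop of $1$ absorbed mostly on $[s_1,s_2]$. One then integrates $\chi'$ from $0$ with $\chi(0)=0$, and verifies (1) holds since $\chi'\equiv 1$ on $[0,s_1]$, (4) holds since $\chi'\equiv 0$ beyond $s_3$, and (2),(3) hold by the derivative bounds just arranged, with $C_1$ an absolute constant depending only on the fixed profile $\phi$.

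The main obstacle is the competition in condition (2): keeping $s\chi''(s)$ bounded below by $-C_1 n^{-11/3}$ on an interval where $s$ itself can be as large as $\sim n^{4/3}$ forces $\chi''$ to be nonnegative (or extraordinarily small in magnitude, $O(n^{-3})$) on the bulk of $[s_1,s_2]$, which in turn means essentially all of the required total decrease of $\chi'$ from $1$ down toward $0$ must be confined either to a region of $s$ of size $O(1)$ near $s_1$ or deferred to the short final window $[s_2,s_3]$ — and in that final window the factor $s\sim n^{4/3}$ again amplifies $\chi''$, which is why only the weaker bound $n^{-5/3}$ is claimed there and why $\chi'$ must already be as small as $\sim n^{-8/3}$ by the time we reach $s_2$. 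Balancing these three regimes — a sharp but $s$-small initial drop, a flat middle with $\chi''\geq 0$, and a gentle terminal drop over length $n^{1/3}$ — is the crux; once the budget is laid out, the construction is a routine mollification argument and the estimates follow by direct differentiation of the explicit formula for $\chi$.
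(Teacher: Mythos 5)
Your reformulation $2\chi'(s)+s\chi''(s)=s^{-1}\bigl(s^{2}\chi'(s)\bigr)'$ is exactly the right lever, and your endpoint calibrations are correct: the value $\chi'\bigl((n-1)^{4/3}\bigr)\asymp n^{-8/3}$ and the terminal window of length $\sim n^{1/3}$ both match what the constraints force. But the mechanism you propose for bringing $\chi'$ down from $1$ to $\delta_n$ on the long interval $[s_1,s_2]$ --- a ``sharp but $s$-small initial drop'' near $s=s_1$ followed by a flat middle with $\chi''\geq 0$ --- cannot work, and your own identity shows why. Set $g(s)=s^2\chi'(s)$. Condition (2) reads $g'(s)\geq -C_1n^{-11/3}s$, so integrating over $[s_1,s_2]$ gives
\begin{equation*}
g(s_2)\ \geq\ g(s_1)-\tfrac{1}{2}C_1n^{-11/3}\bigl(s_2^2-s_1^2\bigr)\ =\ s_1^2-O(n^{-1}).
\end{equation*}
Thus $g$ may decrease by at most $O(n^{-1})$ in total on $[s_1,s_2]$, no matter where the decrease is placed. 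A drop of $\chi'$ from $1$ to near $0$ concentrated on an $O(1)$-neighbourhood of $s_1$ would decrease $g$ by roughly $s_1^2=2^{8/3}$ there, violating the bound by a factor of order $n$. (Equivalently: to drop $\chi'$ by $1$ over a length-$\ell$ window near $s_1$ forces $\chi''\approx-1/\ell$, hence $s\chi''\approx -s_1/\ell\leq -1$ for $\ell\leq s_1$, while $2\chi'\leq 2$; the combination $2\chi'+s\chi''$ becomes of order $-1$, not $-n^{-11/3}$.) Your intermediate claim that $|\chi''|\leq n^{-11/3}$ wherever $\chi''<0$ is likewise incompatible with the required decrease: it caps the total drop of $\chi'$ over the concave set (of length $\leq s_2\sim n^{4/3}$) at $O(n^{-7/3})$, leaving $\chi'(s_2)\approx 1$ instead of $\approx n^{-8/3}$.

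The two inequalities $g(s_2)\geq s_1^2-O(n^{-1})$ and $\chi'(s_2)\lesssim n^{-8/3}$ together show the construction is essentially forced: $g$ must be nearly constant, i.e.\ $\chi'(s)\approx\tau s^{-2}$ throughout $[s_1,s_2]$, with the $O(1)$ decrease of $\chi'$ spread over the \emph{entire} interval so that the negative term $s\chi''=-2\tau s^{-2}$ is exactly cancelled by $2\chi'=2\tau s^{-2}$. This is what the paper does: after a short cubic transition on $[c_1,c_2]$ (chosen so that $2\phi'+s\phi''$ decreases from $2$ to $0$ while $\phi'$ barely moves), it sets $\chi(s)=\chi(c_2)+c_2\chi'(c_2)-c_2^2\chi'(c_2)s^{-1}$ on $[c_2,(n-1)^{4/3}]$, so that $2\chi'+s\chi''\equiv 0$ and $\chi'>0$ there, and then interpolates $\chi'$ to $0$ on $[(n-1)^{4/3},n^{4/3}]$ by an explicit cubic whose coefficients yield the $n^{-5/3}$ bounds. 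If you replace your initial-drop-plus-flat-middle profile by this $s^{-2}$ profile, the remainder of your outline (the terminal drop and the final smoothing) does go through.
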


\begin{proof}
We first construct a $C^2$-function $\chi$ that satisfies the
required properties. We let $c_1= 2^{\frac 4 3}$; we define
$$\chi(s)=s, \quad \text{for}\ s\in[0,c_1].
$$
We consider
$\phi(s)=c_1+(s-c_1)-(s-c_1)^3$; $\chi$ and $\phi$ have identical
derivatives up to second order at $s=c_1$.

We let $c_2$ to be the (unique) element in $[c_1,\infty)$ so that
$2\phi'(c_2)+c_2\phi''(c_2)=0$. This way,
$\phi'(s)> 0$ and $ 2\phi'(s)+s\phi''(s)\geq 0$ for $s\in[c_1,c_2]$.
We define
$$\chi(s)=\phi(s), \quad \text{for}\ s\in [c_1,c_2].
$$

Next, we pick $c_3=(n-1)^{\frac 4 3}$; $c_3> c_2$ for $n$ large. We define
$$\chi(s)=\chi(c_2)+c_2\chi'(c_2)-{c_2^2\chi'(c_2)}\cdot{s\upmo},\quad
\text{for}\ [c_2,c_3].
$$
One checks that for $s\in[c_2,c_3]$,
$\chi'(s)>0$ and $2\chi'(s)+s\chi''(s)=0$.

To extend $\chi$ to $[c_3,c_4]$ with $c_4=n^{\frac{4}{3}}$, we let
$$\psi(s)=a_0+a_1(s-c_3)+a_2(s-c_3)^2+a_3(s-c_3)^3;
$$
we choose $a_i$ so that $\psi(c_3)=\chi'(c_3)$,
$\psi'(c_3)=\chi''(c_3)$ and $\psi(c_4)=\psi'(c_4)=0$. Solving
explicitly and using $\tau=c_2^2\chi'_2(c_2)$, we get
$$a_0={\tau}c_3^{-2},\
a_1=-2\tau c_3^{-3},\
a_2=\frac{\tau(4c_4-7c_3)}{c_3^3(c_4-c_3)^2},\
a_3=\frac{2\tau(2c_3- c_4)}{c_3^{3}(c_4-c_3)^{3}}.
$$
Using the explicit form of $c_3$ and $c_4$, we see that there is a
constant $C_1$ independent of $n$ so that for large $n$, $-C_1n^{-\frac {10} 3}\leq
a_2< 0$ and $0<a_3\leq C_1n^{-\frac {11}{3}}$. Therefore, over
$[c_3,c_4]$ we have
$\psi(s)\geq -C_1n^{-\frac 5 3}$ and $2 \psi(s)+s\psi'(s)\geq
-C_1n^{-\frac 5 3}$.
We define
$$\chi(s)=\int_{c_3}^s\psi(\tau)d\tau+\chi(c_3),\quad\text{for}\  s\in [c_3,c_4],
$$
and define $\chi$ to be a constant function over $[c_4,\infty)$.

In the end, after a small perturbation of the function $\chi$, we
obtain a smooth function that satisfies the requirements stated.
This proves the Lemma.
\end{proof}

 From now on, we let $n$ be a large integer satisfying the conclusion of
Lemma \ref{lem1}. We introduce some auxiliary functions depending on $n$.
We will use subscript $n$ to emphasize their dependence on $n$. Later we will drop the
subscript $n$ when $n$ is fixed.

We set $\bs_n=n^{\frac 4 3}(\bbr^2)^{\frac 2 3}$
and continue to denote $f_0=\frac 3 2(\bbr^2)^{\frac 2 3}$, both are
functions of $(z,u,v)$.  Using the function $\chi$, we construct a
$d$-closed real $(2,2)$-form on $U\setminus E$:
\begin{equation*}
\Phi_n=\frac{3}{2}\, i\partial\bar\partial\Bigl(n^{-\frac 4
3}\chi(\bs_n)(i\partial\bar\partial f_0)\Bigr);
\end{equation*}
since $\bbr$ is smooth on $U\setminus E$, it is well-defined.
Expanding,
\begin{equation*}
\Phi_n=\chi'(\bs_n)(i\partial\bar\partial f_0)\wedge
(i\partial\bar\partial f_0)+2/3\  n^{\frac 4 3}(\bbr^2)^{-\frac 2
3}\chi''(\bs_n)(i\partial \bbr^2\wedge\bar\partial
\bbr^2)\wedge(i\partial\bar\partial f_0).
\end{equation*}
Restricting $\Phi_n$ to $z=0$ in $E$, from (\ref{202}) and
(\ref{203}), we get
\begin{eqnarray*}
\begin{aligned}
n^{\frac 2 3}\Phi_n|_{z=0}=& 2/3\
\bigl(2\chi'(\bs_n)+\bs_n\chi''(\bs_n)\bigr)\bs_n^{\frac 1 2}\lambda_{1\bar
1}\wedge \lambda_{2\bar 2}+2\chi'(\bs_n)\bs_n^{\frac 1 2}\lambda_{1\bar
1}\wedge \lambda_{3\bar
3}\\
&+2/3\ \bigl(2\chi'(\bs_n)+\bs_n\chi''(\bs_n)\bigr)\bs_n^{\frac 1
2}\bbr^{-2}\lambda_{2\bar 2}\wedge \lambda_{3\bar 3}.
\end{aligned}
\end{eqnarray*}

\begin{lemm}\lab{lem2}
The $(2,2)$-form $\Phi_n$ satisfies:
\begin{enumerate}
\item over {$\neib(\frac 2 n)\setminus E$}, $\displaystyle
\Phi_n=\omega_{\co,0}^2$ is positive;
\item over $\neib\setminus\neib(\frac{2}{n})$, there is a constant
$C_2$ such that for sufficiently large $n$,
\begin{equation*}
n^{\frac 2 3}\Phi_n|_{z=0}\geq -C_2n^{-1}\sum_{k\not= j}\lambda_{k\bar
k}\wedge\lambda_{j\bar j};
\end{equation*}
\item   $\Phi_n$  has compact support (contained in) $U$.
\end{enumerate}
\end{lemm}

\begin{proof}
This follows from Lemma \ref{lem1}.
\end{proof}

Since $\Phi_n$ has compact support (contained in) $
U$; using extension by zero, we can view it as a global form on
{$Y\setminus E$}. \vsp

Next we investigate the restriction of $\omega$ to $\neib$. We
let $\iota: E\to Y$ be the inclusion and consider the restriction
(pull back) $\omega|_E=\iota\sta\omega$; it is a K\"ahler metric on
$E$. With $\pi: \neib\to E$ the tautological projection induced by
the bundle structure of $L\sumt$, the form
$$\tilde \omega_E=\pi\sta(\omega|_E)
$$
is a closed semi-positive $(1,1)$-form on $\neib$.

\begin{lemm} There is a smooth function $h$ of $\neib$ such that
$
\omega|_{\neib}=\tilde\omega_E+i\partial\bar\partial h.
$
\end{lemm}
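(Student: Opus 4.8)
The plan is to show that the smooth real $(1,1)$-form
\[
\beta:=\omega|_\neib-\tilde\omega_E
\]
is $i\partial\bar\partial$-exact on $\neib$; the desired function $h$ is then any potential of $\beta$. Two ingredients are needed: that $\beta$ is $d$-exact, and that $H^{0,1}_{\bar\partial}(\neib)=0$.

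For the first ingredient, let $\iota\colon E\hookrightarrow\neib$ be the zero section, so that $\pi\circ\iota=\mathrm{id}_E$. Then $\iota\sta\tilde\omega_E=\iota\sta\pi\sta(\omega|_E)=\omega|_E=\iota\sta(\omega|_\neib)$, i.e.\ $\beta$ vanishes identically on $E$. Moreover the fibrewise dilation $F(t,z,u,v)=(z,tu,tv)$, $t\in[0,1]$, maps $\neib$ into $\neib$ because $\bbr(z,tu,tv)=t\,\bbr(z,u,v)$, and it is a smooth deformation retraction of $\neib$ onto $E$; hence $\iota\sta\colon H^2_{\mathrm{dR}}(\neib)\to H^2_{\mathrm{dR}}(E)$ is an isomorphism. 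Since $\iota\sta[\beta]=0$ we get $[\beta]=0$, so $\beta=d\alpha$ for a smooth $1$-form $\alpha$; as $\beta$ is real we may replace $\alpha$ by $\mathrm{Re}\,\alpha$ and assume $\alpha$ real.

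For the second ingredient, $\pi\colon\neib\to E\cong\Po$ has Stein fibres (they are balls in $\CC^2$, since the fibre over a point is $\{(u,v):(1+|z|^2)(|u|^2+|v|^2)<1\}$), so $R^q\pi_*\sO_\neib=0$ for $q\geq1$ and the Leray spectral sequence gives $H^1(\neib,\sO_\neib)\cong H^1(\Po,\pi_*\sO_\neib)$. Expanding holomorphic functions in Taylor series along the fibres, the $I_E$-adic filtration of $\pi_*\sO_\neib$ has graded pieces $S^k\bigl((L\dual)\sumt\bigr)\cong\sO_{\Po}(k)^{\oplus(k+1)}$ for $k\geq0$, each with vanishing $H^1$ on $\Po$; since $\Po$ is a curve an elementary argument with this filtration gives $H^1(\Po,\pi_*\sO_\neib)=0$, hence $H^{0,1}_{\bar\partial}(\neib)=0$ by the Dolbeault isomorphism.

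Granting both ingredients, write $\alpha=\xi+\bar\xi$ with $\xi$ of type $(1,0)$. Comparing bidegrees in $d\alpha=\beta$, which is of type $(1,1)$, forces $\partial\xi=0$, $\bar\partial\bar\xi=0$, and $\beta=\bar\partial\xi+\partial\bar\xi$. Since $\bar\xi$ is a $\bar\partial$-closed $(0,1)$-form and $H^{0,1}_{\bar\partial}(\neib)=0$, there is a smooth $g\colon\neib\to\CC$ with $\bar\xi=\bar\partial g$, hence $\xi=\overline{\bar\xi}=\partial\bar g$ and
\[
\beta=\bar\partial\partial\bar g+\partial\bar\partial g=\partial\bar\partial g-\partial\bar\partial\bar g=i\,\partial\bar\partial h,\qquad h:=-i(g-\bar g)=2\,\mathrm{Im}\,g,
\]
which is a smooth real function on $\neib$, as required. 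The one step that is not soft is the vanishing $H^{0,1}_{\bar\partial}(\neib)=0$: because $\neib$ is non-compact one cannot invoke the $\partial\bar\partial$-lemma, and one must instead use the explicit structure of $\neib$ as a disk bundle over $\Po$ sitting inside the negative bundle $L\sumt$; the rest of the argument is topology of the retraction and bookkeeping of bidegrees.
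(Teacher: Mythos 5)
Your argument is correct and is essentially the paper's own proof: you establish $[\omega|_\neib]=[\tilde\omega_E]$ (the paper asserts this directly, you justify it via the fibrewise retraction onto $E$), split the real primitive into $(1,0)$ and $(0,1)$ parts, and reduce to $H^{0,1}_{\bar\partial}(\neib)=0$, which both you and the paper obtain from the Leray spectral sequence together with the negativity of the normal bundle $L\sumt$. The only difference is that you supply more detail than the paper's footnote on the vanishing of $H^1(\Po,\pi_*\sO_\neib)$ via the $I_E$-adic filtration with graded pieces $\sO_{\Po}(k)^{\oplus(k+1)}$.
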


\begin{proof}
Since $[\omega|_U]=[\tilde \omega_E]\in H_{dR}^2(\neib,\mathbb R)$,
there exists a real 1-form $\alpha$ such that $
\omega|_U-\tilde\omega_E=d\alpha$. Since $\alpha$ is real, we can
write $\alpha=\beta+\bar\beta$ for $\beta$ a $(0,1)$-form. Therefore
from
\begin{equation*}
\omega|_U-\tilde\omega_E=\partial\bar\beta+(\partial\beta+\dbar\bar\beta)+\dbar\beta,
\end{equation*}
we obtain $\dbar\beta=0$.

We now prove that the Dolbeault cohomology group $H^{0,1}_{\dbar}(U,\CC)=0$.
Let $0,\infty\in E$ be the standard $0$ and $\infty$ in $\Po$ using the isomorphism
$E\cong \Po$ fixed at the beginning of this section. Continue to denote by $\pi: U\to E$
the projection, we introduce open subsets
$U_+=U\setminus \pi\upmo(\infty)$, $U_-=U\setminus \pi\upmo(0)$ and $B=U_-\cap U_+$.
Following the argument leading to \eqref{bbr}, $U_+\sub\CC^3$ is the domain $\{(z,u,v)\mid \bbr(z,u,v)^2<1\}$.
Since $\bbr(z,u,v)^2$ is pluri-subharmonic, $U_+$ is Levi-pseudo-convex; thus is a domain of holomorphy.
Applying the Dolbeault theorem \cite[Thm 6.3.1]{Krantz},
$H^{0,1}_{\dbar}(U_+,\CC)=0$. For the same reason,
$H^{0,1}_{\dbar}(U_-,\CC)=0$.

Let $\gamma\in H^{0,1}_{\bar\partial}(U,\mathbb C)$.
Then there exist functions $h_+$ on $U_+$ and $h_-$ on $U_-$ such that
$\gamma|_{U_+}=\bar\partial h_+$ and $ \gamma|_{U_-}=\bar\partial h_-$.
Thus $h_0=(h_+-h_-)|_B$ is holomorphic (on $B$). We claim that we can find holomorphic
$a_+$ on $U_+$ and holomorphic $a_-$ on $U_-$ so that $(a_+-a_-)|_B=h_0$;
therefore $h_+-a_+$ on $U_+$ and $h_--a_-$ on $U_-$ patch along $B$ to form a smooth function
$\tilde h$ on $U$ so that $\dbar \tilde h=\gamma$. This will prove $H^{0,1}_{\dbar}(U,\CC)=0$.

We now prove the claim. We keep the embedding $U_+\sub \CC^3$ mentioned;
using $B\sub U_+$ we have the induced embedding $B\sub U_+\sub\CC^3$.
Let $h_0$ be the holomorphic function on $B$ mentioned. Since for any $c\in \CC\sta$ the slice
$B\cap\{ z=c\}$ is a polydisk in $\CC^2$, $h_0$ has a power series expansion
$h_0=\sum_{i,j\ge 0} a_{ij}(z) u^i v^j$, where $a_{ij}(z)$ are holomorphic on $\CC\sta$.
Using the Laurent series expansions, we can write $a_{ij}(z)=a_{ij}^+(z)+a_{ij}^-(z\upmo)$
so that $a_{ij}^\pm(z)$ are holomorphic on $\CC$ and $a_{ij}^-(0)=0$. (Such
decompositions are unique.)
We let $h_0^+=\sum_{i,j} a^+_{ij}(z) u^i v^j$ and $h_0^-=\sum_{i,j} a^-_{ij}(z\upmo)u^iv^j$.
Using the Cauchy integral formula and applying power series convergence criterion, one checks that
$h_0^+$ extends to a holomorphic function on $U_+$.

It remains to show that $a^-$ extends to a holomorphic function on $U_-$. For this, we use that $U\sub L^{\oplus 2}$
and $L$ is the degree $-1$ line bundle on $\Po\cong E$.
Thus we can embed $U_-\sub \CC^3$ via coordinates $(z',u',v')$
such that the transition function from $U_+$ to $U_-$ is
\begin{equation}
\label{tran}(z',u',v')=(z\upmo, uz, vz).
\end{equation}
(Note that $u\equiv 1$ transforms to $u'=1/z'$, which has a simple pole at
$z'=0$.) Thus $h_0^-=\sum_{i,j} a^-_{ij} (z')(z')^{i+1}(u')^i(v')^j$.
Since $a_{ij}^-(0)=0$, $h_0^-$ converges on $B$ implies that it extends to a holomorphic
function on $U_-$. This proves the claim; hence $H^{0,1}_{\dbar}(U,\CC)=0$.

Because
$H^{0,1}_{\dbar}(\neib,\mathbb C)=0$,
we can find a function $g$ on $\neib$ such that $\beta=\dbar{g}$.
Therefore letting $h=-i(g-\bar{g})$,
$\omega|_U-\tilde\omega_E=i\partial\dbar h$.
\end{proof}

Since
$i\partial\bar\partial h|_E=
\iota\sta(\omega-\tilde\omega_E)=0$, the restriction
$h|_E=\text{const}.$. Thus by subtracting a constant from $h$ we can
assume that $h|_E=0$. Next, using the open $U_+=U\setminus  \pi\upmo(\infty)$
and the embedding $U_+\sub\CC^3$, we introduce directional derivatives:
\begin{equation}\label{direction}
\ba=\frac{\partial}{\partial u}(h|_{U_+})|_{E-\infty}\and \bb=\frac{\partial}{\partial v}(h|_{U_+})|_{E-\infty}.
\end{equation}
Using the embedding $U_-\sub\CC^3$ and the transition function \eqref{tran}, one sees that
the smooth function on $U_+$ defined via
\begin{equation}\label{h1}
h_1|_{U_+} := \ba u+\bar \ba \bar u+\bb v +\bar\bb\bar v
\end{equation}
extends to a smooth function on $U$ that is
$\RR$-linear along the fibers of $\pi:U\to E$; we denote this extension by $h_1$.

Using $h_1$, we now introduce another $(2,2)$-form. We let $h_2=h-h_1$. We pick a
decreasing function $\sigma(s)$ that takes value $1$ when $0\leq
s\leq 1$ and vanishes when $s\geq 4$. We set $\bt_n=n^2\bbr^2$, which
is a function of $(z,u,v)$. Since $\sigma(\bf t)$ has
compact support (contained in) $\overline {U(\frac 2 {n^2})}$, using
extension by zero, we can view it as a function on $Y$ and then use
it to define a real $(2,2)$-form on $Y$:
\begin{equation*}
\Psi_n=\omega^2-i\partial\bar\partial\bigl(\sigma(\bt_n)\cdot
h_2\cdot\bigl(2\tilde\omega_E+i\partial\bar\partial(2h_1+h_2)\bigr)\bigr)
-i\partial\bar\partial\bigl(\sigma(\bt_n)\cdot h_1\cdot
i\partial\bar\partial h_1\bigr).
\end{equation*}
This form satisfies
$$d\Psi_n=0,\qquad \Psi_n|_{Y\setminus \neib(\frac 2 n)}=\omega^2\and
\Psi_n|_{\neib(\frac 1 n)}=0.
$$
Here the first and second follows from the definitions of $\Psi_n$ and
$\sigma(\bt_n)$; the third follows from $i\partial\bar\partial
h_1\wedge \tilde\omega_E=0$.

We now add a multiple of the compactly supported form $\Phi_n$ to
$\Psi_n$:
\begin{equation*}
\Omega_0=\Psi_n+C_0n^{\frac 2 3}
\Phi_n,\quad C_0>0.
\end{equation*}
We emphasize that the form $\Omega_0$ depends on the constant $C_0$
and the integer $n$. We shall specify their choices later.

\begin{lemm}\lab{l5}
The real $(2,2)$-form $\Omega_{0}$ is $d$-closed;
\begin{enumerate}
\item restricting to ${\neib(\frac 1 n)\setminus{E}}$,
$\Omega_{0}|_{{\neib(\frac 1 n)\setminus{E}}}=C_0n^{\frac 2 3}\omega_{\co,0}^2$;
\item restricting to ${Y\setminus \neib}$,
$\Omega_{0}|_{{Y\setminus \neib}}=\omega^2$.
\end{enumerate}
Further, for sufficiently large $C_0$, there is a constant $n(C_0)$
such that for $n\geq n(C_0)$, $\Omega_{0}>0$.
\end{lemm}

\begin{proof}
Because $\Phi_n$ and $\Psi_n$ are both  $d$-closed, $\Omega_0$ is
d-closed. We show that $\Omega_0>0$. By the definitions of $\Phi_n$ and $\Psi_n$,
$$\Omega_0|_{X\setminus U}=\Psi_n|_{X\setminus U}=\omega^2>0
\and \Omega_0|_{U(\frac 1 n)}=C_0n^{\frac 2 3}\Phi_n|_{U(\frac 1
n)}=C_0n^{\frac 2 3}\omega_{\co,0}^2>0.
$$
So
we only need to check the positivity of $\Omega_{0}$ over
$\neib\setminus\neib(\frac 1 n)$. We first look at the region
$\neib(\frac 2 n)\setminus\neib(\frac 1 n)$. Within this region,
\begin{equation}\lab{205}
\begin{aligned}
\Psi_n =&(1-\sigma(\bt_n))\omega^2-i\bigl( h_1\partial\bar\partial
\sigma(\bt_n)+\partial\sigma(\bt_n)\wedge\bar\partial h_1+\partial
h_1\wedge\bar\partial\sigma(\bt_n)\bigr)\wedge i\partial\bar\partial
h_1\\
&-i\bigl( h_2 \partial\bar
\partial\sigma(\bt_n)+\partial \sigma(\bt_n)\wedge \bar\partial
h_2+\partial h_2\wedge\bar\partial\sigma(\bt_n)\bigr)\wedge
\bigl(2\tilde\omega_E+i\partial\bar\partial(2h_1+h_2)\bigr).
\end{aligned}
\end{equation}
Since $1-\sigma(\bt_n)\geq 0$, the first term is non-negative. For the
other two terms, because $E$ is covered by $D=\{|z| \leq 2\}$
and $D'=\{| z| \geq 1\}$, we only need to investigate the
positivity over $D$ and $D\pri$ separately. Because the discussion
is similar, we shall deal with $D$ now.

To begin with, we fix a small  $\delta>0$ (to be determined later). We  consider
$V_{\delta}=\pi^{-1}(D)\cap\neib(\delta)$. Over $V_{\delta}$, the
second term in \eqref{205} is
$$-i\bigl(h_1\partial\bar\partial
\sigma(\bt_n)+\partial\sigma(\bt_n)\wedge\bar\partial h_1+\partial
h_1\wedge\bar\partial\sigma(\bt_n)\bigr)\wedge
i\partial\bar\partial h_1,
$$
which, after expanding, becomes
\begin{equation*}\lab{206}
-n^2h_1\bigl(\sigma'i\partial\dbar \bbr^2+\bt_n\sigma''r^{-2}i\partial
\bbr^2\wedge\dbar \bbr^2\bigr)\wedge i\partial\dbar h_1 -\bt_n\sigma'
\bbr^{-2}(i\partial \bbr^2\wedge \dbar h_1+i\partial h_1\wedge \dbar
\bbr^2)\wedge i\partial\dbar h_1.
\end{equation*}
Over the same region, we expand the relevant terms:
$$\partial \bbr^2=\Gamma^{-2}\bbr^{2}\bar z\lambda_1+\Gamma \bbr\lambda_2
$$
for $\Gamma\triangleq(1+\mid z\mid ^2)^{\frac{1}{2}}$, and
\begin{equation*}\lab{207}
i\partial\dbar \bbr^2=\Gamma^{-2}\bbr^2\lambda_{1\bar 1}
+\Gamma^2\cdot (\lambda_{2\bar 2}+\lambda_{3\bar 3})+\Gamma^{-1}\bbr\cdot (\bar
z\lambda_{1\bar 2}+z\lambda_{2\bar 1}).
\end{equation*}

 For simplicity, we use
subindex $z$ and $\bar z$ to denote the partial derivatives with
respect to $z$ and $\bar z$.  For instance, $\ba_{z}=\frac{\partial\ba}{\partial z}$ and $\bb_{z\bar
z}=\frac{\partial^2\bb}{\partial z\partial \bar z}$. We introduce
$$
c_{1\bar 1}=2\text{Re}\,\bl \frac{\ba_{z\bar z}u+\bb_{z\bar
z}v}{\brr}\br,\quad c_{2\bar 1}=\overline{{c}_{1\bar 2}}=\Gamma\cdot\frac{
\ba_{\bar z}u +\bb_{\bar z} v}{\bbr},\quad c_{3\bar
1}=\overline{{c}_{1\bar 3}}=\Gamma\cdot\frac{\ba_{\bar z}\bar
v-\bb_{\bar z}\bar u}{\bbr},
$$
$$
d_{1\bar 2}= \frac{\ba_zu+\bb_zv+\overline{\ba_{\bar z}}\bar
u+\overline{\bb_{\bar z}}\bar v}{\bbr},\quad d_{2\bar 2}=
\Gamma\cdot\frac{ {\bf a}u+{\bf b}v}{\bbr},\quad d_{3\bar 2}=
\Gamma\cdot\frac{{\bf a}\bar v-{\bf b}\bar u}{\bbr},
$$
where $\text{Re}$ is the real part. Following such convention, we
have
\begin{equation*}
\partial h_1=\bbr d_{1\bar 2}\lambda_1+d_{2\bar 2}\lambda_2+d_{3\bar 2}\lambda_3
\end{equation*}
and
\begin{equation*}
i\partial\bar\partial h_1=\bbr c_{1\bar 1}\lambda_{1\bar 1}+c_{2\bar
1}\lambda_{2\bar 1}+c_{1\bar 2}\lambda_{1\bar 2}+c_{3\bar
1}\lambda_{3\bar 1}+c_{1\bar 3}\lambda_{1\bar 3}.
\end{equation*}

To simply further, we introduce
\begin{eqnarray*}
\begin{aligned}
\alpha_{1\bar 2}=&\overline{\alpha_{2\bar 1}}=-nh_1\sigma'
\Gamma^2c_{2\bar 1}+\bt_n^{\frac 1 2
}\sigma'\Gamma c_{3\bar 1}\overline{ d_{3\bar 2}};\\
\alpha_{2\bar 2}=&-nh_1\bt_n^{\frac 1 2}\sigma'\Gamma^2c_{1\bar 1}
+2\bt_n\sigma'\Gamma^{-2}\text{Re}(zc_{1\bar 3}d_{3\bar 2});\\
\alpha_{2\bar 3}=&\overline{\alpha_{3\bar 2}}=nh_1\bt_n^{\frac 1
2}(\sigma'+\bt_n\sigma'')\Gamma^{-1}\bar z c_{3\bar
1}+\bt_n\sigma'\Gamma^{-2}\bigl(\bar z c_{3\bar
1}\overline{d_{2\bar 2}}+zc_{1\bar 2}d_{3\bar 2}\bigr)\\
&+\bt_n\sigma'\Gamma(c_{3\bar 1}d_{1\bar 2}-c_{1\bar 1}d_{3\bar
2});\\
\alpha_{1\bar
3}=&\overline{\alpha_{3\bar1}}=-nh_1(\sigma'+\bt_n\sigma'')\Gamma^2c_{3\bar
1}-\bt_n^{\frac 1 2 }\sigma'\Gamma(2c_{3\bar
1}\text{Re}d_{2\bar 2}-c_{2\bar 1}d_{3\bar 2});\\
\alpha_{3\bar 3}=&-nh_1\bt_n^{\frac 1
2}(\sigma'+\bt_n\sigma'')\bigl(\Gamma^2c_{1\bar 1}-2\Gamma^{-1}\text{Re}(zc_{1\bar 2})\bigr)\\
&-2\bt_n\sigma'\Gamma\bigl( c_{1\bar 1}\text{Re}d_{2\bar
2}-\text{Re}(c_{2\bar 1}d_{1\bar 2})-\Gamma^{-3}\text{Re}(zc_{1\bar
2}d_{2\bar 2})\bigr).
\end{aligned}
\end{eqnarray*}

Because for $\bbr$ small, $|u| , | v| \leq 2\bbr$, we can find a
constant  $C_3$ depending on $\delta$ so that
\begin{equation*}
 | c_{i\bar j}|,\  |  d_{i\bar j}| \leq C_3\quad \text{for}\ (z,u,v)\in V_\delta.
\end{equation*}
To control the terms $\alpha_{i\bar j}$, we need to bound the term
$nh_1$. For this, since $ n^{-1}\leq \bbr< 2 n^{-1}$ over
$U(\frac{2}{n})\setminus U(\frac{1}{n})$, the term $nh_1$ is bounded
from above uniformly over $V_\delta\cap\bl U(\frac{2}{n})\setminus
U(\frac{1}{n})\br$ for $n>\frac{2}{\delta}$.  Thus $\delta$ must be sufficiently small, which
can be determined accordingly to Lemma \ref{lem1}.
Therefore, enlarging $C_3$ if necessary and for
$n>\frac{2}{\delta}$, we have
$$ |  \alpha_{i\bar
j} |  \leq C_3,\quad (z,u,v)\in
V_{[\frac{1}{n},\frac{2}{n}]}\triangleq \pi\upmo(D)\cap\bl
U(2/n)\setminus U(1/n)\br.$$

Finally, we introduce $\Lambda_{i\bar j}$
$$\Lambda_{i\bar
j}=(i\lambda_{k}\wedge\lambda_{\bar k})\wedge
(i\lambda_{l_1}\wedge\lambda_{\bar l_2})=\lambda_{k\bar
k}\wedge\lambda_{l_1\bar l_2}\ \ \text{for}\ \
\{i,k,l_1\}=\{j,k,l_2\}=\{1,2,3\}.
$$
Simplifying using the
notations introduced, the expression
\begin{eqnarray}\lab{208}
\begin{aligned}
&-i\bigl(h_1\partial\bar\partial
\sigma(\bt_n)\partial\sigma(\bt_n)\wedge\bar\partial h_1+\partial
h_1\wedge\bar\partial\sigma(\bt_n)\bigr)\wedge i\partial\bar\partial h_1\\
&\quad =n\sum_{l=2,3}(\alpha_{1\bar l}\Lambda_{1\bar l}+\alpha_{l\bar
1}\Lambda_{l\bar 1})+\sum_{k,l=2,3}\alpha_{k\bar l}\Lambda_{k\bar
l}.
\end{aligned}
\end{eqnarray}

We now look at the third term in (\ref{205}). This time we consider
\begin{equation*}
\begin{aligned}
&-i(h_2 \partial\bar
\partial\sigma(\bt_n)+\partial \sigma(\bt_n)\wedge \bar\partial
h_2+\partial h_2\wedge\bar\partial\sigma(\bt_n))|_{V_\delta}\\
&\quad =-\bbr^{-2}h_2(\bt_n\sigma'i\partial\dbar \bbr^2-\bt_n^2\sigma''
\bbr^{-2}i\partial \bbr^2\wedge \dbar \bbr^2)
-\bt_n\sigma'\bbr^{-2}(\partial \bbr^2\wedge \dbar h_2+\partial
h_2\wedge \dbar \bbr^2). \end{aligned} \end{equation*} Since
restricting to $E$ the partial derivatives of $h_2$ with respect to
$u$ and $v$ are zero, when $\bbr$ is small, $|  h_2|
=O(\bbr^2)$ and $| \partial h_2| =O(\bbr)$. Also notice that
the mixed term such as $\lambda_{2\bar 3}$ can be controlled by
$\lambda_{2\bar 2}$ and $\lambda_{3\bar 3}$.
Therefore for $n>\frac{2}{\delta}$, over
$V_{[\frac{1}{n},\frac{2}{n}]}$ we have
\begin{equation*}
C_3\sum_{k=1}^3\lambda_{k\bar k}\geq -i(h_2
\partial\bar
\partial\sigma(\bt_n)+\partial \sigma(\bt_n)\wedge \bar\partial
h_2+\partial h_2\wedge\bar\partial\sigma(\bt_n))\geq
-C_3\sum_{k=1}^3\lambda_{k\bar k}.
\end{equation*}
Therefore the third term in (\ref{205}) can be controlled by
$-C_3\sum_{k}\Lambda_{k\bar k}$.
Inserting  this and (\ref{208}) into (\ref{205}), we get
\begin{eqnarray*}\lab{012504}
\Psi_n
\geq n\sum_{l=2,3}(\alpha_{1\bar l}\Lambda_{1\bar l}+\alpha_{l\bar
1}\Lambda_{l\bar 1})+\sum_{k,l=2,3}\alpha_{k\bar l}\Lambda_{k\bar
l}-C_3\sum_{k=1}^3\Lambda_{k\bar k}.
\end{eqnarray*}
On the other hand by a directly calculation, we have
\begin{equation*}\lab{2000}
\begin{aligned}
n^{\frac 2 3}\Phi_n|_{V_{[\frac 1 n,\frac 2 n]}}=& 4/3\ \bt_n^{-\frac 2
3}\Gamma^4n^2\Lambda_{1\bar 1}+4/3\  \bt_n^{-\frac 1 6}n\Gamma\bar
z\Lambda_{2\bar 1}+ 4/3\ \bt_n^{-\frac 1 6}n\Gamma z\Lambda_{1\bar
2}\\
+&2\bt_n^{\frac 1 3}(1-3^{-1}\Gamma^{-2}| z| ^2)\Lambda_{2\bar
2}+ 4/3\ \bt_n^{\frac 1 3}(1-\Gamma^{-2}| z| ^2)\Lambda_{3\bar
3}.
\end{aligned}
\end{equation*}
Combining above two, over $V_{[\frac{1}{n},\frac{2}{n}]}$ we
finally obtain
\begin{eqnarray*}
\begin{aligned}
\Omega_{0}&\geq\Bigl(\frac {4\Gamma^4n^2}{3\bt_n ^{\frac 2
3}}C_0-C_3\Bigr)\Lambda_{1\bar 1}+n\Bigl( \frac {4\Gamma z} {3
\bt_n^{\frac 1 6}} C_0+\alpha_{1\bar 2}\Bigr)\Lambda_{1\bar
2}+n\alpha_{1\bar 3}\Lambda_{1\bar 3}\\
+n&\Bigl(\frac {4\Gamma\bar z} {3 \bt_n^{\frac 1 6}}C_0+\alpha_{2\bar
1}\Bigr)\Lambda_{2\bar 1}+\Bigl(2\bt_n^{\frac 1 3}\bigl(1-\frac {|
z| ^2} {3\Gamma^2}\bigr)C_0-C_3+\alpha_{2\bar
2}\Bigr)\Lambda_{2\bar 2}+\alpha_{2\bar 3}\Lambda_{2\bar
3}\\
&+n\alpha_{3\bar 1}\Lambda_{3\bar 1}+\alpha_{3\bar 2}\Lambda_{3\bar
2}+ \Bigl(\frac 4 3 \bt_n^{\frac 1 3}\bigl(1-\frac {| z| ^2}
{\Gamma^2} ) C_0-C_3+\alpha_{3\bar 3}\Bigr)\Lambda_{3\bar 3}.
\end{aligned}
\end{eqnarray*}

We now prove that we can find a sufficiently large constant
$C_0$
so that for any $n>\frac 2 \delta$, the right hand
side of the above inequality is positive. We let
$e_{ij}$ be the coefficient of the term $\Lambda_{i\bar j}$ in the
above inequality. To prove the mentioned positivity, we only need to
check that under the stated constraint, the three minors of the
$3\times 3$ matrix $[e_{ij}]$ are positive:
$$e_{11}>0, \quad \det [e_{ij}]_{1\leq i,j\leq 2}>0,\quad \det [e_{ij}]_{3\times 3}>0.
$$

We recall that $\bt_n=n^2\bbr^2$ and $\Gamma=(1+| z| ^2)^{\frac
1 2}$. So in the region $V_{[\frac{1}{n},\frac{2}{n}]}$,
$1\leq \bt_n<2$ and $1\leq \Gamma\leq \sqrt{5}$. Therefore by
expanding the determinants, we see immediately that they are all
positive for $n$ positive and $C_0$ large enough. We fix
such a $C_0$ in the definition of $\Omega_0$. Therefore, for
any $n>\frac 2 \delta$, the form $\Omega_0$ is positive in the
region $U(\frac 2 n)\setminus U(\frac 1 n)$.

It remains to consider the region $\neib\setminus \neib(\frac 2 n)$.
Over this region, we shall prove that $\Omega_0$ is positive when
$n$ is large enough. For this purpose, we will use the smooth
homogenous Candelas-de la Ossa metric \cite{CO} on $\neib$:
\begin{equation}\lab{cos}
 \omega_{\co}=i\partial\dbar f(\bbr^2)+i\partial\dbar\log(1+|
z| ^2),
\end{equation}
where $f$ is defined via $f'=\bbr^{-2}\eta$ for
$\eta^2(\eta+3/2)=\bbr^4$. Explicitly,
\begin{equation}\lab{209}
\begin{aligned}
\omega_{\co}|_{z=0}=(\eta+1) \lambda_{1\bar 1}+\frac 2 3
\frac{(\eta+\frac 3 2)^{\frac 1 2}}{\eta+1}\lambda_{2\bar 2}+\frac 1
{(\eta+\frac 3 2)^{\frac 1 2}} \lambda_{3\bar 3}.
\end{aligned}
\end{equation}
By simple estimate,
\begin{equation*}
\omega_{\co}^2|_{z=0}\geq \frac 1 3\sum_{k\not = j}\lambda_{k\bar
k}\wedge\lambda_{j\bar j}
\end{equation*}
Comparing with Lemma \ref{lem2} (2), since both $\omega_{\co}^2$ and
$\Phi_n$ are homogeneous, over $\neib\setminus \neib(\frac 2 n)$ we
get
\begin{equation*}
n^{\frac 2 3}\Phi_n\geq -3C_2n^{-1}\omega_{\co}^2.
\end{equation*}
Therefore, over $\neib\setminus\neib(\frac 2 n)$,
\begin{eqnarray*}
\Omega_{0}\geq \omega^2-3C_0C_2n^{-1}\omega_{\co}^2.
\end{eqnarray*}
This proves that for the fixed $C_0$ and $C_2$, we can choose $n$
big enough so that the real $(2,2)$-form $\Omega_{0}$ is positive
over $\neib\setminus\neib(\frac 2 n)$. This proves the lemma.
\end{proof}

The closed $(2,2)$-form $\Omega_0$ is positive  $(2,2)$-form on
$Y\setminus E$. From \cite{Mic}, there is a positive $(1,1)$-form
$\omega_0$ on $Y\setminus E$ such that $\omega_0^2=\Omega_0$. This
proves

\begin{prop}\lab{6}
For the open subset $E\sub U\sub Y$ chosen
and for sufficiently large $C_0$ and $n$, we can find a balanced
metric $\omega_0$ over $Y\setminus E$ such that 
\begin{enumerate}
\item restricting to $Y\setminus \neib(1)$: $\omega_0=\omega$;
\item restricting to $\neib(\frac 1 n)\setminus E$: $\omega_0=C_0^{\frac 1 2}n^{\frac 1 3}\omega_{\co,0}$;
\item and restricting to
$\neib(1)\setminus\neib(\frac 1 n)$: $\omega_0^2$ is $\partial\dbar$-exact.
\end{enumerate}
\end{prop}

Let $Y$ be a Calabi-Yau manifold and $\omega$ its K\"ahler metric.
Let $\bE\subset Y$ be a union of mutually disjoint $(-1,-1)$-curves
$E_i\sub Y$, $1\le i\le l$. For each $E_i\sub Y$ we choose an open neighborhood
$E_i\sub U_i\sub Y$ as given by Proposition \ref{6}, and form $U_i(c)$ accordingly.
We let $\bU=\cup_{i=1}^l U_i$ and let $\bU(c)=\cup_{i=1}^l U_i(c)$. We have

\begin{coro}\lab{c7}
Let the notation be as stated. Then the Proposition \ref{6} holds
true with $U$ and $U(c)$ replaced by $\bU$ and $\bU(c)$, respectively.
\end{coro}

\begin{proof}
Since the proof of Lemma \ref{l5} is by modifying $\omega^2$ within
the open neighborhood $E\subset U\subset Y$, if we choose
$U_i$ to be mutually disjoint, then we can
modify $\omega^2$ the same way within $\bU$ to obtain the
desired metric $\omega_0$. Note that from the proof of Lemma
\ref{l5}, we can choose a common $n$ and $C_0$ that work for all $i$.
\end{proof}

Because $Y-\bE=X_{0,\sm}$ (the smooth part of $X_0$), $\omega_0$ is
a smooth balanced metric on $X_{0,\sm}$ that is Candelas-de la Ossa's metric near the singular points of $X_0$.

 \section{constructing balanced metrics on the smoothings}

Assuming the threefold $X_0$ can be smoothed to a family of smooth
Calabi-Yau threefolds $X_t$, in this section we shall show that we
can deform the metric $\omega_0$ to a family of smooth balanced
metrics on $X_t$.

\begin{defi}\label{total}
We say $X_t$ is a smoothing of $X_0$ if there is a smooth four
dimensional complex manifold $\mathcal X$ and a proper holomorphic
projection $\mathcal X\to\Delta$ to the unit disk $\Delta$ in $\CC$
so that the general fibers $X_t=\mathcal X\times_{\Delta}t$ are
smooth and the central fiber $\mathcal X\times_\Delta 0$ is
$X_0$.
\end{defi}




Let $X_0$ be a singular space that is a construction of disjoint $(-1,-1)$-curves; let
$\omega_0$ be the balanced metric on $X_{0,\sm}$ constructed in the
previous section. We suppose $X_t$ is a smoothing of $X_0$
with $\mathcal X$ the total space of the smoothing.

We begin with the local geometry of $\mathcal X$ near a singular
point of $X_0$. Let $p\in X_0$ be any singular point that is the
contraction of $E=\pi\upmo(p)$. Since $X_0$ is a contraction of
$(-1,-1)$-curves in $Y$, from the classification of singularities of
threefolds,
a neighborhood of $p$ in $X_0$ is
isomorphic to a neighborhood of $0$ in
$$ w^2_1+w^2_2+w^2_3+w^2_4=0.
$$
Applying the theorem in \cite{Sh}, a neighborhood of $p$ in the total family
$\cX$ is isomorphic to a neighborhood of $0$ in
$$ w^2_1+w^2_2+w^2_3+w^2_4-t=0 \quad(\text{in}\  \CC^4\times\Delta),
$$
as a family over $t\in\Delta$. (Here the $t$ is linear because $\cX$ is smooth.)
More precisely, for some $\eps>0$ and
for
$$\tilde U=\{(w,t)\in \CC^4\times\Delta_\eps \mid |t|<\eps, \,\norml w\normr <1,\, w^2_1+w^2_2+w^2_3+w^2_4-t=0\},
$$
($\norml w\normr^2 =\sum_{i=1}^4 | w_i| ^2$,) there is a holomorphic map
$$\xi:\tilde U\lra \mathcal X,
$$
commuting with the projections $\tilde U\to \Delta_\eps$ and
$\mathcal X\to\Delta$, so that $\mathcal{U}=\xi(\tilde U)$ is an
open neighborhood of $p\in \mathcal X$ and $\xi$ induces an
isomorphism from $\tilde U$ to $\mathcal{U}\sub \mathcal X$.

We fix such an isomorphism $\xi$; we denote by $\tilde U_t$ the
fiber of $\tilde U$ over $t\in\Delta_\eps$, and denote
$U_t=\xi(\tilde U_t)$, which is an open subset of
$X_t\cap\mathcal{U}$. For any $1>c>0$, we let
$$\tilde\neib(c)=\{(w,t)\in\tilde U\mid  \norml w\normr<c\}\and \neib_t(c)=\xi(\tilde\neib(c))\cap X_t.
$$
This way, for fixed $t$, $U_t(c)$ forms an increasing sequence of
open subsets of $X_t$; the variables $(w_1,\cdots,w_4)$ can
{be viewed} as coordinate functions with the constraint $\sum
w_i^2=t$ understood.

In case $t=0$, we can choose $\xi$ so that the $(w_1,\cdots,w_4)$
relates to the coordinate $(z,u,v)$ of \eqref{U} by
$$ w_1=\frac{v-zu}{R},
\ w_2=\frac{v+zu}{iR}, \ w_3=\frac{u-zv}{iR}, \ w_4=\frac{u+zv}{R},
$$
where the constant $R$ is to be determined momentarily. Hence under $\xi$
the function $\bbr$ introduced in Section 2
coincides with the function { $R\cdot (\parallel
w\parallel)|_{U_0}$.} We then define $\bbr$ on $\mathcal U$ to be
$\bbr=r\circ \xi\upmo$; they are extensions of the similarly
denoted $\bbr$ on $X_0$ used in the previous section. Also, the
punctured opens $U_0(c)\sta= U_0(c)-p$ are isomorphic to the opens
$U(c)-E$ used in the previous section under $\xi$ as well. Since we
need to work with different fibers $X_t$ simultaneously, we shall
reserve the subscript
$U_t(c)$ to denote open subsets in $X_t$. 

We now choose $R$. By choosing $R$ large and rescaling $\omega_0$,
we can assume that for $f_0=\frac 3 2(\bbr^2)^{\frac 2 3}$
\begin{equation}\lab{metric}
\Omega_0|_{\neib_0(1)}=\omega_0^2|_{\neib_0(1)}=i\partial\bar\partial(f_0\cdot
i\partial\bar\partial f_0).
\end{equation}
Here since $f_0$ is understood as a function on { $U_0(1)\subset
X_0$,} the partials
  $\partial$ and $\bar\partial$
are holomorphic and anti-holomorphic differentials of $X_0$.

One more convention we need to introduce before we move on. Note
that $X_0$ has several singular points, say $p_1,\cdots,p_l$,
corresponding to contracting $E_i\sub Y$. For each such $p_i$, we
will go through the same procedure as we did for a general singular
$p\in X_0$ moments earlier to pick an open $p_i\in \mathcal{U}_i\sub
\mathcal X$, an isomorphism $\xi_i: \tilde
U\to\mathcal{U}_i\sub\mathcal X$ and the open subsets
$U_{i,t}(c)\sub X_t$, etc. In fixing the $\xi_i$ for various $p_i$,
{since we can choose a common $C_0$ and $n$ for all $i\in\{1,\cdots,
l\}$ in Corollary \ref{c7}, we can} pick a single large enough $R$
that works for all $U_{i,0}$ so that \eqref{metric} holds over
$U_{i,0}$.

We then form $V=\cup_{i=1}^l \mathcal{U}_i\sub\mathcal X$ and
$V(c)=\cup_{i=1}^l \mathcal{U}_i(c) \sub\mathcal X$. Accordingly, we
let $V_t=V\cap X_t$, let $V_t(c)=V(c)\cap X_t$, and let $\bbr$ be
the function on $V$ whose restriction to each $\mathcal{U}_i\sub V$
is the $\bbr=r\circ\xi\upmo_i$ defined moment earlier. This
procedure gives us a $(2,2)$-form $\Omega_0$ on $X_0$ such that
\begin{equation}\lab{metric2}
\Omega_0|_{V_0(1)}=\omega_0^2|_{V_0(1)}=i\partial\bar\partial(f_0\cdot
i\partial\bar\partial f_0).
\end{equation}
\va

With these preparations, we now study the deformation $X_t$ near
the singular points $p_i\in X_0$. For $c\in (0,1]$, we
introduce
$$X_t[c]=X_t\setminus V_t(c).
$$
For small $t$, $X_t[\ha]$ are diffeomorphic to each other. We fix
diffeomorphisms $\psi_t: X_t[\ha]\to X_0[\ha]$ that depend smoothly on
$t$ and  that $\psi_0=\text{id}$.
The diffeomorphisms $\psi_t$ pull back the form on $X_0[\frac 1 2]$ to
form on $X_t[\ha]$.

We then let $\varrho(s)$ be a (decreasing) cut-off function such
that $\varrho(s)=1$ when $s\leq \frac 5 8$ and $\varrho(s)=0$ when
$s\geq\frac 7 8$. This  function defines a cut off function
$\varrho_0$ on $X_0$ by rule $\varrho_0|_{X_0[1]}=0$,
$\varrho_0|_{V_0(\frac 1 2)}=1$ and $\varrho_0|_{V_0(1)\setminus
V_0(\frac 1 2)}=\varrho(\bbr)$. Then
$$\Omega_0-i\partial\bar\partial\bigl(\varrho_0\cdot f_0\cdot
i\partial\bar\partial f_0\bigr)
$$
is a smooth $(2,2)$-form on $X_0$ with compact support {
contained in} $X_0[\ha]$. In particular, for small $t$,
$$\psi_t^\ast\bigl(\Omega_0-i\partial\bar\partial(\varrho_0\cdot f_0\cdot
i\partial\bar\partial f_0)\bigr)$$ is a form on $X_t[\frac 1 2]$
which compact support lies in it. So we can view this form as the
form defined on $X_t$ by defining it to be $0$ on
$V_t(\frac 1 2)$.

Momentarily, we will use $\partial$ and $\bar\partial$ over $X_t$.
In the remainder of this paper, we will take holomorphic and
anti-holomorphic differentials of functions on $X_t$ for either
$t\ne 0$ or $t=0$. To keep the notation simple, we use the same
$\partial f$ and $\bar\partial f$ to mean $\partial|_{X_t} f$ and
$\bar\partial|_{X_t}f$ on either $t=0$ or $t\ne 0$, depending on
whether $f$ is a function on $X_0$ or $X_t$. We hope the meaning of
$\partial$ and $\bar\partial$ will cause no confusions. \va

In order to construct a positive $(2,2)$-form on $X_t$, we need to
extend the function $f_0(\bbr^2)=\frac 3 2 \bbr^{\frac 4 3}$ defined
in Definition \ref{co0}. For $t\ne 0\in \bigtriangleup_{\epsilon}$,
we define $f_t(s)$ to be the function
\begin{equation}\lab{pot}
f_t(s)=2^{-\frac 1 3}| t|^{\frac 2
3}\int_{0}^{\cosh^{-1}(\frac {s}{\mid
t\mid})}(\sinh2\tau-2\tau)^{\frac 1 3}d\tau, \quad s\in (0,1).
\end{equation}
The functions $f_t$ give Candelas-de la Ossa's metrics (CO-metric).

\begin{defi} The two form
$\omega_{\co,t}=i\partial\bar\partial f_t(\bbr^2)$ is the Ricci-flat
K\"ahler form on $V_t(1)$ constructed by Candelas and de la Ossa.
\end{defi}

For later application, we need to confirm the smooth dependence of the metrics
$\omega_{oc,t}$ on $t$.
We denote by $f_t^{(k)}(s)$ the $k$-th derivative in $s$ of $f_t(s)$.

\begin{lemm}\lab{control}
Let $f_0(s)=\frac{3}{2} s^{\frac{2}{3}}$. Then
\newline
(1). for any $\delta>0$ and $k$, restricting to $s\in[\delta,1]$ the functions $f_t^{(k)}(s)$
converges uniformly to $f_0^{(k)}(s)$ when $t$ goes to zero;
\newline
(2). For any pair $0<\delta'<\delta<\frac 1 4$, there exists a
$\alpha_{\delta'}$ such that when $\mid t\mid<\alpha_{\delta'}$ and
$s\in[\delta',\delta]$, $\frac1 2\leq \frac{ f_t'(s)}{f_0'(s)}\leq
2$ and $\frac 1 2 \leq \frac{f_t''(s)}{f_0''(s)}\leq 2$.
\end{lemm}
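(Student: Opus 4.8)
The plan is to analyze the explicit integral formula \eqref{pot} for $f_t$ and its derivatives, reducing everything to elementary estimates of the one-variable functions $\cosh^{-1}(s/|t|)$ and $(\sinh 2\tau - 2\tau)^{1/3}$. First I would differentiate \eqref{pot} once: by the fundamental theorem of calculus,
$$
f_t'(s) = 2^{-1/3}|t|^{2/3}\,(\sinh 2\tau_* - 2\tau_*)^{1/3}\cdot\frac{d}{ds}\cosh^{-1}\!\Big(\frac{s}{|t|}\Big),
$$
where $\tau_* = \cosh^{-1}(s/|t|)$, so that $\cosh 2\tau_* = 2\cosh^2\tau_* - 1 = 2s^2/|t|^2 - 1$ and $\sinh 2\tau_* = 2(s/|t|)\sqrt{s^2/|t|^2 - 1}$. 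Substituting and simplifying, $f_t'(s)$ becomes an explicit algebraic expression in $s$ and $|t|$; letting $|t|\to 0$ with $s\geq\delta$ fixed, one checks directly that $\tau_*\to\infty$, $\sinh 2\tau_* - 2\tau_* \sim \tfrac12 e^{2\tau_*}\sim 2s^2/|t|^2$, and the whole expression converges uniformly on $[\delta,1]$ to $f_0'(s) = s^{-1/3}$. Higher derivatives $f_t^{(k)}$ are handled the same way: each further differentiation in $s$ produces, by the chain rule, a finite sum of terms that are explicit rational combinations of $s$, $|t|$, $(\sinh 2\tau_* - 2\tau_*)^{1/3}$ and negative powers of $\sqrt{s^2/|t|^2 - 1}$; on the compact interval $[\delta,1]$ all of these are bounded away from their singular loci once $|t|<\delta/2$, so term-by-term one gets uniform convergence to the corresponding $f_0^{(k)}$. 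This proves part (1).

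For part (2), the point is that $\delta'$ is allowed to shrink to $0$, so the convergence in (1) is no longer uniform down to $s=0$; instead I would extract a uniform two-sided comparison of ratios. Using the simplified closed form for $f_t'(s)$ from the previous paragraph, write
$$
\frac{f_t'(s)}{f_0'(s)} = \frac{f_t'(s)}{s^{-1/3}} = g\!\Big(\frac{|t|}{s}\Big)
$$
for an explicit continuous function $g$ on $[0,1)$ with $g(0)=1$ — the key structural fact being that $f_t'(s)$ depends on $(s,|t|)$ only through the scale-invariant combination $|t|/s$ after pulling out the power $s^{-1/3}$, which follows from the scaling $f_{\lambda t}(\lambda s) = \lambda^{2/3} f_t(s)$ built into \eqref{pot}. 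Since $g$ is continuous and $g(0)=1$, there is $\beta\in(0,1)$ with $\tfrac12 \leq g(\xi)\leq 2$ for $\xi\in[0,\beta]$; then for $s\in[\delta',\delta]$ the ratio $|t|/s \leq |t|/\delta'$, so choosing $\alpha_{\delta'} = \beta\delta'$ forces $|t|/s\leq\beta$ and hence $\tfrac12\leq f_t'(s)/f_0'(s)\leq 2$. The same argument applied to $f_t''(s)/f_0''(s)$ — again scale-invariant in $|t|/s$ after removing the power $s^{-4/3}$ — gives the second bound, possibly after shrinking $\alpha_{\delta'}$.

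The main obstacle I anticipate is purely bookkeeping: carrying out the substitution $\cosh 2\tau_* = 2s^2/|t|^2 - 1$ cleanly and tracking the chain-rule expansions for the higher derivatives $f_t^{(k)}$ without sign or exponent errors, and verifying that the resulting function $g$ really is continuous up to $\xi=0$ with value $1$ (equivalently, that the asymptotics $\sinh 2\tau_* - 2\tau_* \sim \tfrac12 e^{2\tau_*}$ combine with the algebraic prefactors to cancel the apparent singularity). Once the scale invariance $f_t'(s) = s^{-1/3} g(|t|/s)$ is established in closed form, both parts follow from elementary continuity and compactness, so no hard analysis is involved — the delicacy is entirely in the explicit computation and in noticing that the scaling symmetry of \eqref{pot} is what makes part (2) work uniformly as $\delta'\to 0$.
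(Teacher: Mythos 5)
Your proof is correct and follows essentially the same route as the paper: differentiating the integral formula \eqref{pot} yields exactly the paper's factorization $f_t'(s)=s^{-1/3}g_{|t|}(s)$ with $g_{|t|}$ depending only on the ratio $|t|/s$, and the higher derivatives are handled by the same term-by-term/inductive bookkeeping. Your explicit appeal to this scale invariance together with the continuity of $g$ at $0$ is precisely the content behind the paper's terse remark that part (2) ``follows from the explicit expressions of $f_u'(s)$ and $f_u''(s)$,'' and it correctly produces an $\alpha_{\delta'}$ depending only on $\delta'$.
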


\begin{proof}
Since the dependence on $t\in\Delta_\epsilon$ is via its norm, we
shall substitute $|t|$ by the positive real variable $u$ and define $f_u(s)$
as in \eqref{pot} with $t$ replaced by $u>0$.

At first we consider the convergence of $f_u(s)$. Using
\eqref{pot}, we get
$$\lim_{u\to 0}f_u(s)=\frac 3 2 s^{\frac 2 3}\lim_{u\to 0} g_u(s),
$$
where $g_u(s)={\bigl(\bigl(1-\frac{u^2}{s^2}\bigr)^{\frac 1 2}-\frac{u^2}{s^2}\cosh^{-1}(\frac
s u)\bigr)^{\frac 1 3}}{\bigl(1-\frac{u^2}{s^2}\bigr)^{-\frac 1 2}}$.
Since  $\frac{u^2}{s^2}\cosh^{-1}(\frac s u)\sim
\frac{u^2}{s^2}|\ln u|$ when $s\in[\delta,1)$, $g_u(s)$
converges uniformly to $1$ in $[\delta,1)$; so $f_u(s)$ converges
uniformly to $f_0(s)=\frac 3 2s^{\frac 2 3}$.

For the first and the second derivatives. by \eqref{pot}, we compute
\begin{equation*}
f_u'(s)=s^{-\frac 1 3}g_u(s)\and
f_u''(s)=\bigl(-s^{-1}f_u'(s)+\frac 2 3s^{-2}(f'_u(s))^{-2}\bigr)\bigl(1-\frac
{u^2}{s^2}\bigr)^{-1}.
\end{equation*}
So by inspection, over $[\delta,1)$,  $f'_u(s)$  converges uniformly to $f_0'(s)=s^{-\frac 1 3}$
and $f_u''(s)$ converges
uniformly to $f_0''(s)=-\frac 1 3 s^{-\frac 4 3}$.

Since for any $k>0$, the $k$-th derivative of
$(1-\frac{u^2}{s^2})^{-1}$ converges uniformly to the zero function over
$s\in[\delta,1)$, applying induction proves the remainder cases of (1).

The second part of the Lemma follows form the explicit expressions of $f_u'(s)$
and $f_u''(s)$. This proves the Lemma.
\end{proof}

Our next step is to extend $\Omega_0$ to nearby fibers so that near
the singular point it equals to the CO-metrics $\omega_{\co,t}$. To
this end, we define
\[
\varrho_t(z)=\left\{\begin{array}{ll} (\psi_t^\ast\varrho_0)(z)\ \ & z\in X_t[\frac 1 2],\\
1 \ \ & z\in V_t(\frac 1 2);
\end{array}
\right.
\]
we define
\begin{equation}\lab{Phi}
\Theta_t=\psi_t ^\ast\bigl(\Omega_0- i\partial\bar\partial(\varrho_0\cdot
f_0(\bbr^2)\cdot i\partial\bar\partial
f_0(\bbr^2))\bigr)+i\partial\bar\partial\bigl(\varrho_t\cdot f_
t(\bbr^2)\cdot i\partial\bar\partial f_t(\bbr^2)\bigr).
\end{equation}
It is well-defined and is a $d$-closed $4$-form on $X_t$. It decomposes
$$\Theta_t=\Theta_t^{3,1}+\Theta_t^{2,2}+\Theta_t^{1,3}.
$$

We claim that for $t$ sufficiently small, $\Theta_t^{2,2}$ is
positive definite. Indeed, over $V_t(\frac{1}{2})$, the first term
in \eqref{Phi} is trivial and, since  $\rho_t=1$,
$$\Theta_t^{2,2}|_{V_t(\frac 1
2)}=\Theta_t|_{V_t(\frac 1 2)}=\omega^2_{\co,t}>0.
$$
Over {$X_t[\ha]$}, we argue that
\begin{equation}\lab{lim-2}
\lim_{t\to 0}\Theta_t|_{{X_t[\ha]}}=\Omega_0|_{{X_0[\ha]}}
\end{equation}
uniformly. From the expression of $\Theta_t$ it is clear that
$\Theta_t$ only involves $f_u(s)$ and its derivatives up to second
order. Hence by {(1) of} the previous Lemma, we see that over
$V_t(1)\setminus V_t(\frac 1 2)$, $f_t(\brr)$ and its partial
derivatives up to second order all converges uniformly to that of
$f_0(\brr)$. Hence since {$X_0[\ha]$} is compact and is disjoint
from the singular points, the limit \eqref{lim-2} holds uniformly.
In the end, since the part $\Theta_ t^{3,1}$ and $\Theta_t^{1,3}$
are trivial over $V_t(\ha)$ and that the complex structure of $X_t$
varies smoothly in $t$, the part $\Theta_t^{1,3}$ and
$\Theta_t^{3,1}$ converges to zero uniformly as $t\to 0$.
Consequently, for sufficiently small $\eps$, $\Theta_t^{2,2}$ is
positive on $X_t[\ha]$ for $|t|<\eps$. Combined with the positivity
of $\Theta_t^{2,2}$ over $V_t(\ha)$, we obtained the desired
positivity of $\Theta_t^{2,2}$ for $t$ sufficiently small.

We let $\omega_t$ be the hermitian form on $X_t$ such that
$(\omega_t)^2=\Theta^{2,2}_t$. Note that for small $t$, these
metrics have uniform geometry on $X_t[\ha]$ and are Ricci-flat
K\"ahler metric over $V_t(\frac 1 2)$.
In the following we will use $\omega_t$
as our background metric on $X_t$. Therefore objects such as norms
and volume forms on $X_t$ are all taken with respect to $\omega_t$.

\va
Recall that our goal is to find balanced metrics on $X_t$. We will
achieve this by modifying the form $\Theta_t^{2,2}$ to make it both
closed and positive definite.

Since $\Theta_t$ is $d$-closed on $X_t$,
$$\bar\partial
\Theta_t^{2,2}=-\partial\Theta_t^{1,3}.
$$
We claim that for sufficiently small $t$,
$H^{1,3}(X_t,\mathbb{C})=0$.  By the Dolbeault theorm and Serre
duality, $H^{1,3}(X_t,\CC)=H^3(X_t,T^\ast_{X_t})=H^0(X_t, T_{X_t})$.
Thus $H^{1,3}(X_t,\mathbb{C})=0$ is equivalent to $H^0(T_{X_t})=0$.
We consider the total family of the deformation $\pi:\cX\to\Delta$
(cf. Definition \ref{total}). Let $\cX^*=\cX-\{p_1,\cdots,p_l\}$.
Then $\cX\sta\to\Delta$ is smooth and the relative tangent bundle
$T_{\cX\sta/\Delta}$ is a vector bundle on $\cX\sta$.

Now suppose for infinitely many $t\in\Delta$ approaching
$0\in\Delta$, $H^0(T_{X_t})\ne 0$, then either by using elliptic
estimations or applying the work of log-differential,
one concludes that $H^0(T_{X_{0,\sm}})\ne 0$, where $X_{0,\sm}=
X_0-\{p_1,\cdots,p_l\}$. Let $\bE\sub Y$ be the union of the
contracted rational curves under the projection $Y\to X_0$; then
$Y-\bE = X_{0,\sm}$, and thus $H^0(T_{X_{0,\sm}})=H^0(T_{Y-\bE})$.
On the other hand, since $Y$ is smooth and $\bE\sub Y$ is a
codimension 2 complex submanifold, by Hartogs' Lemma,
$H^0(T_Y)=H^0(T_{Y-\bE})\ne 0$.

Since $Y$ is K\"ahler and its
fundamental group is finite, we have the following vanishing results.
First, using $H^1(Y,\CC)=0$, we obtain $H^{0,1}(Y,\mathbb C)=H^{1,0}(Y,\mathbb
C)=0$. Since the canonical line bundle of $Y$ is trivial, this implies $H^{3,1}(Y,\mathbb C)=H^{1,3}(Y,\mathbb C)=0$.
Applying the Serre duality, we get $H^0(T_Y)=H^{1,3}(Y,\CC)=0$, contradicting to $H^0(T_Y)\ne 0$
stated earlier. Thus $H^0(T_{X_t})=0$ for sufficiently small $t$.

\vsp

Therefore there is a $(1,2)$-form $\nu_t$ on $X_t$ such that $\dbar
\nu_t=-\Theta_t^{1,3}$. 
We let $\mu_t$ be a $(1,2)$-form on $X_t$ such that
\begin{equation}\lab{301}
i\partial\bar\partial\mu_t=-\partial\Theta_ t^{1,3}=\bar\partial
\Theta_t^{2,2} \and \mu_t\perp_{\omega_t} \ker\partial\bar\partial.
\end{equation}
We  define
\begin{equation}\lab{Omega}
\Omega_t=\Theta_t^{2,2}+\theta_t+\bar\theta_t,\quad
\theta_t=i\partial\mu_t.
\end{equation}
Then (\ref{301}) implies
\begin{equation*}
\bar\partial\Omega_t=\bar\partial\Theta_t^{2,2}+
\bar\partial(i\partial\mu_t)+\bar\partial(-i\bar\partial\bar\mu_t)=0,
\end{equation*}
and since $\Omega_t$ is real, $\Omega_t$ is $d$-closed.

\begin{prop}\lab{po}
For sufficiently small $ t$, $\Omega_t$ is  positive.
\end{prop}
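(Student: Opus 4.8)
The plan is to reduce the positivity of $\Omega_t=\Phi_t^{2,2}+\theta_t+\bar\theta_t$ to an estimate showing that the perturbation term $\theta_t=i\partial\mu_t$ is uniformly small in $C^0(\omega_t)$ as $t\to 0$. Since $\Phi_t^{2,2}$ is a positive $(2,2)$-form (equal to $\omega_t^2$ by construction) with eigenvalues bounded below by a positive constant independent of $t$ in the region $X_t[\ha]$ by the uniform convergence \eqref{lim-2}, and equal to $\omega_{co,t}^2$ on $V_t(\ha)$ where the Candelas--de la Ossa metric is Ricci-flat K\"ahler, the form $\Omega_t$ will be positive as soon as $\norml\theta_t\normr_{C^0(\omega_t)}$ is small compared to the lower bound on the eigenvalues of $\Phi_t^{2,2}$. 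The delicate point is that near the singular points the metric $\omega_t$ degenerates, so ``uniformly small'' must be understood with respect to the degenerating background metric $\omega_t$; this is precisely why the introduction announces an estimate of the form $\lim_{t\to 0}t^{\kappa}\norml\theta_t\normr_{C^0}^2=0$ for $\kappa>-\tfrac 4 3$, which is stronger than what naive scaling would give.

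First I would recall from the construction that $\mu_t$ solves $i\partial\dbar\mu_t=\dbar\Phi_t^{2,2}$ with $\mu_t\perp_{\omega_t}\ker\partial\dbar$, and that $\dbar\Phi_t^{2,2}=-\partial\Phi_t^{1,3}$ is supported in the transition region $V_t(1)\setminus V_t(\ha)$ (since $\Phi_t$ is $d$-closed and $\Phi_t^{1,3},\Phi_t^{3,1}$ vanish on $V_t(\ha)$ and on $X_t[1]$). I would then pass to the Kodaira--Spencer formulation: choose $\gamma_t$ solving $E_t(\gamma_t)=\dbar\omega_t^2$ with $\gamma_t\perp_{\omega_t}\ker E_t$, check (as the introduction asserts) that automatically $\partial_t\gamma_t=0$ and $\mu_t=-i\dbar_t^\ast\partial_t^\ast\gamma_t$, and hence $\theta_t=i\partial_t\mu_t=\partial_t\dbar_t^\ast\partial_t^\ast\gamma_t$. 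The point of this reformulation is that $\gamma_t$ is governed by a genuinely elliptic (Hodge-theoretic) equation on $X_t$, so its size is controlled by $\norml\dbar\omega_t^2\normr$ in suitable weighted Sobolev norms, together with the vanishing of the relevant $L^2$-cohomology group which guarantees solvability and a uniform bound on the Green operator. I would estimate $\norml\dbar\omega_t^2\normr$ directly from the explicit formula \eqref{Phi}: the only contribution comes from the cutoff $\varrho_t$ and the CO-potential $f_t(\bbr^2)$ in the annulus $V_t(1)\setminus V_t(\ha)$, where Lemma \ref{control} gives uniform control of $f_t$ and its first two derivatives in terms of $f_0$, so this right-hand side is bounded uniformly in $t$ (indeed converges to $\dbar\omega_0^2$).

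Next I would run the elliptic and $L^2$ estimates on $X_t$ with the degenerating metric $\omega_t$: use the local model $w_1^2+\cdots+w_4^2=t$ near each $p_i$, where $\omega_t$ is the scaled CO-metric, to set up weighted Sobolev spaces adapted to the conifold neck; the key analytic input is that the $L^2$-cohomology group in the relevant bidegree vanishes (so the Laplacian-type operator $E_t$, resp. $\partial_t\dbar_t^\ast\partial_t^\ast$, has a bounded inverse on the orthogonal complement of its kernel), and that the bound is uniform as $t\to 0$ because the CO-metrics on the neck converge in a scale-invariant way to the cone metric. Combining the $L^2$-bound on $\gamma_t$ with interior elliptic estimates and a bootstrap, and then tracking how the Sobolev embedding constant on the neck depends on the scale $|t|^{1/3}$, yields $\norml\theta_t\normr_{C^0(\omega_t)}^2=o(|t|^{-\kappa})$ for any $\kappa>-\tfrac 4 3$, in particular $\norml\theta_t\normr_{C^0(\omega_t)}\to 0$. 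Finally, I would feed this back into the pointwise positivity: writing $\Omega_t=\omega_t^2+\theta_t+\bar\theta_t$ and using that the eigenvalues of $\omega_t^2$ relative to $\omega_t$ are $\geq$ a positive constant (both on $X_t[\ha]$ by uniform convergence and on $V_t(\ha)$ where it is literally $\omega_{co,t}^2$), smallness of the $C^0(\omega_t)$-norm of $\theta_t$ forces $\Omega_t$ to stay in the positive cone. The main obstacle is the uniformity of the elliptic/$L^2$ estimates as the background metric $\omega_t$ degenerates along the vanishing cycles --- establishing the vanishing of $L^2$-cohomology and a $t$-independent operator-norm bound for the Green operator on the conifold neck is the heart of the matter, and it is exactly what Sections 3 and 4 are set up to carry out.
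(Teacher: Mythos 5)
Your proposal follows essentially the same route as the paper: reduce positivity of $\Omega_t$ to $C^0(\omega_t)$-smallness of $\theta_t$, rewrite $\theta_t=\partial\dbar^\ast\partial^\ast\gamma_t$ via the Kodaira--Spencer operator $E_t$, and control $\gamma_t$ by weighted elliptic and $L^2$ estimates on the conifold neck together with the vanishing of the relevant $L^2$-cohomology. The only difference in execution is that the paper obtains the decay $|t|^{\kappa}\sup|\theta_t|^2\to 0$ by a contradiction argument with a normalized sequence $\tilde\gamma_{t_i}$ (whose limit is shown to be simultaneously of unit $L^2$-norm and zero), rather than by proving a $t$-independent operator-norm bound for the Green operator as you suggest.
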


Once this is proved, then the hermitian form $\tilde\omega_t$
defined via $(\tilde\omega_t)^2=\Omega_t$ is a balanced metric on
$X_t$.

\section{The positivity of $\Omega_t$}

To prove the Proposition, we first show that for the $C^0$-norm $\parallel\!\cdot\!\parallel_{C^0}$
measured using $\omega_t$, we have

\begin{lemm}
Suppose $\lim_{t\to0}\norml \theta_t\normr_{C^0}=0$, then $\Omega_t$
is positive for small $t$.
\end{lemm}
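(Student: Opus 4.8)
The plan is to prove this by an elementary pointwise linear--algebra argument, using in an essential way that both the notion of positivity and the $C^0$--norm in the hypothesis are taken with respect to the background metric $\omega_t$, together with the fact (established just above) that $(\omega_t)^2=\Phi_t^{2,2}$. Recall from \cite{Mic} that a real $(2,2)$--form $\Psi$ on the complex threefold $X_t$ is \emph{positive} precisely when $\Psi\wedge i\alpha\wedge\bar\alpha$ is a positive multiple of the volume form $\mathrm{vol}_{\omega_t}$ for every nonzero $(1,0)$--form $\alpha$ at every point, and that in this case there is a unique positive $(1,1)$--form $\omega$ with $\omega^2=\Psi$, smooth whenever $\Psi$ is. So I would first record the pointwise identity: at any point of $X_t$, choosing an $\omega_t$--unitary coframe of $(1,0)$--forms $\zeta^1,\zeta^2,\zeta^3$ and setting $\beta_k=i\zeta^k\wedge\bar\zeta^k$, one has $\omega_t=\beta_1+\beta_2+\beta_3$, hence for any $(1,0)$--form $\alpha$
\[
\Phi_t^{2,2}\wedge i\alpha\wedge\bar\alpha=(\omega_t)^2\wedge i\alpha\wedge\bar\alpha = 2\,|\alpha|_{\omega_t}^2\,\mathrm{vol}_{\omega_t}.
\]
Thus $\Phi_t^{2,2}$ is \emph{uniformly} positive relative to its own associated metric $\omega_t$, with a constant that does not deteriorate as $t\to0$. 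This is the crucial point, and it is automatic here; it would fail if one measured against a fixed reference metric, since the Candelas--de la Ossa metrics $\omega_{co,t}$ entering $\Phi_t^{2,2}$ near the conifold necks degenerate as $t\to0$.

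Next I would estimate the perturbation $\theta_t+\bar\theta_t$, a real $(2,2)$--form with $\|\theta_t+\bar\theta_t\|_{C^0}\le 2\|\theta_t\|_{C^0}$, all norms with respect to $\omega_t$. Since the map sending a $(2,2)$--form $\eta$ to the coefficient of $\mathrm{vol}_{\omega_t}$ in $\eta\wedge i\alpha\wedge\bar\alpha$ is linear and bounded by a dimensional constant times $\|\eta\|_{\omega_t}|\alpha|_{\omega_t}^2$, there is a universal $C_0$ with
\[
\bigl|(\theta_t+\bar\theta_t)\wedge i\alpha\wedge\bar\alpha\bigr|\le C_0\,\|\theta_t\|_{C^0}\,|\alpha|_{\omega_t}^2\,\mathrm{vol}_{\omega_t}
\]
at every point and for every $(1,0)$--form $\alpha$. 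Combining the two displays gives
\[
\Omega_t\wedge i\alpha\wedge\bar\alpha\ \ge\ \bigl(2-C_0\|\theta_t\|_{C^0}\bigr)\,|\alpha|_{\omega_t}^2\,\mathrm{vol}_{\omega_t}.
\]
By hypothesis $\|\theta_t\|_{C^0}\to0$ as $t\to0$, so for $|t|$ small the right--hand side is strictly positive for all $\alpha\ne0$; hence $\Omega_t$ is a positive $(2,2)$--form. Since $\Phi_t^{2,2}$ is smooth and $\mu_t$ (the solution of the elliptic system \eqref{301}) is smooth, $\Omega_t$ is smooth, so the associated $\tilde\omega_t$ with $(\tilde\omega_t)^2=\Omega_t$ is a smooth positive $(1,1)$--form; together with $d\Omega_t=0$ this exhibits $\tilde\omega_t$ as a balanced metric, which is how the lemma feeds into Proposition \ref{po}.

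I expect no genuinely hard step in this lemma: its whole content is the observation that positivity and the norm in the hypothesis are both referred to $\omega_t$, so that $\Phi_t^{2,2}=(\omega_t)^2$ is positive with a uniform lower bound, and a uniformly small perturbation of a uniformly positive form is still positive. The substantive work — verifying the hypothesis $\lim_{t\to0}\|\theta_t\|_{C^0}=0$ — is the analytic heart of the paper and is carried out in Sections 3 and 4 via the elliptic and $L^2$ estimates for $\mu_t=-i\dbar_t^\ast\partial_t^\ast\gamma_t$.
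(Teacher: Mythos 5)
Your argument is correct and is essentially the same as the paper's: both reduce to the pointwise observation that $\Phi_t^{2,2}=(\omega_t)^2$ is uniformly positive \emph{with respect to $\omega_t$ itself}, so a perturbation that is uniformly small in the $\omega_t$-norm cannot destroy positivity. The paper phrases this via the Hodge star $\ast_t$ (reducing to positivity of the hermitian matrix $\delta_{ij}+\vartheta_{i\bar j}$), while you test against $i\alpha\wedge\bar\alpha$ directly; these are equivalent formulations of the same linear-algebra step.
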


\begin{proof}
We let $\ast_t$ be the Hodge operator associated to the hermitian
metric $\omega_t$. Then
$$\ast_t\Omega_t=\omega_t+\ast_t(\theta_
t+\bar\theta_t)
$$
and $\Omega_t$ is positive if $\omega_t+\ast_t(\theta_
t+\bar\theta_t)$ is positive.

Now let $q_t$ be any closed point of $X_t$ and let $(z_\alpha)$ be a
local chart of $X_t$ at $q_t$ so that
$$\omega_t(q_t)=\sqrt{-1}\delta_{\alpha\beta}dz_\alpha\wedge d\bar z_\beta
\and \ast_t(\theta_
t+\bar\theta_t)(q_t)=\vartheta=\sqrt{-1}\vartheta_{\alpha\bar
\beta}dz_\alpha\wedge d\bar z_\beta.
$$
Thus $\omega_t+\ast_t(\theta_t+\bar\theta_t)$ is positive at $q_t$
if and only if the matrix $\bigl(\delta_{\alpha\beta}+\vartheta_{\alpha\bar
\beta}\bigr)_{1\leq \alpha,\beta\leq 3}$ is positive. Since
$\omega_t(q_t)=\sqrt{-1}\delta_{\alpha\beta}dz_\alpha\wedge d\bar z_\beta$,
\begin{eqnarray*}
\sum_{\alpha,\beta}| \vartheta_{\alpha\bar \beta}| ^2=| \ast_t(\theta_
t+\bar\theta_t)(q_t)| ^2=| (\theta_t+\bar\theta_t)(q_t)| ^2 \leq 4|
\theta(q_t)| ^2.
\end{eqnarray*}
Thus if $| \theta(q_t)|^2$ is small, the matrix $\bigl(\delta_{\alpha
\beta}+\bar\vartheta_{\alpha\bar \beta}\bigr)$ is positive. This proves that if the
$C^0$-norm $\norml\theta_t\normr_{C^0}$ is small, the form
$\ast_t\Omega_t$, and hence the form $\Omega_t$, is positive.
\end{proof}

The estimate of $\parallel\!\!\theta_t\!\!\parallel_{C^0}$ will be
achieved in the remainder part of this section.

To estimate $\theta_t$, we use the $4^{th}$-order differential
operator $E_t$ (first introduced in \cite{KS}) on
$\Lambda^{2,3}(X_t)$:
\begin{equation*}
E_t=\partial\bar\partial\bar
\partial^\ast\partial^\ast
+\partial^\ast\bar\partial\bar
\partial^\ast\partial
+\partial^\ast\partial.
\end{equation*}
Here  the adjoint operators $\partial^\ast=-\ast
\bar\partial\ast$ and $\bar\partial^\ast=-\ast\partial\ast$ (the
same as $\bar\vartheta$ and $\vartheta$  in \cite{MK}) are defined
using the hermitian metric $\omega_t$ on $X_t$. In \cite{KS},
Kodaira and Spencer proved that $E_t$ are self-adjoint, strongly
elliptic of order $4$, and a form $\phi\in \Omega^{2,3}(X_t)$
satisfying $E_t\phi=0$ if and only if
\begin{equation}\lab{302}
\partial\phi=0 \and
\bar\partial^\ast\partial^\ast\phi=0.
\end{equation}

We now let $\gamma_t$ be a solution of
\begin{equation}\lab{303}
E_t(\gamma_t)=-\partial\Theta_t^{1,3}.
\end{equation}
We first check that $-\partial\Theta_t^{1,3}\bot \ker E_t$. Let
$\phi\in\ker E_t$, from (\ref{302}) we have
$\bar\partial^\ast\partial^\ast \phi=0$; from (\ref{301}) we have
\begin{equation*}
(-\partial\Theta_t^{1,3},\phi)=(i\partial\bar\partial
\mu_t,\phi)=(i\mu_t,\bar\partial^\ast\partial^\ast\phi\bigr)=0.
\end{equation*}
This implies $-\partial\Theta_t^{1,3}\bot\ker E_t$. By the theory of
elliptic operators, there is a unique smooth solution $\gamma_ t\bot
\ker E_t$ of (\ref{303}).

We claim that the $\gamma_t$ and the $\mu _t$ defined in \eqref{301}
are related by
\begin{equation}\lab{304}
i\mu_t=\bar\partial^\ast\partial^\ast\gamma_t\quad \text{and}\quad
\partial\gamma_t=0.
\end{equation}
This can be seen as follows. From (\ref{301}) and (\ref{303}), we
get $E_t(\gamma_t)-i\partial\bar\partial\mu_t=0$, which, from the
definition of the operator $E_t$, is equivalent to
\begin{equation*}
\partial\bar\partial(\bar\partial^\ast
\partial^\ast\gamma_t-i\mu_t)
+\partial^\ast(\bar\partial\bar\partial^\ast+1)
\partial\gamma_t=0.
\end{equation*}
By taking  the $L^2$-norm of the left hand side, we get
\begin{equation}\lab{305}
\partial\bar\partial(\bar\partial^\ast
\partial^\ast\gamma_t-i\mu_t)
=0\ \ \ \text{and}\ \ \
\partial^\ast(\bar\partial\bar
\partial^\ast+1)\partial\gamma_t=0.
\end{equation}
On the other hand, for any $\phi\in\ker
\partial\bar\partial$, we have
$(\bar\partial^\ast\partial^\ast\gamma_t,\phi) =(\gamma_
t,\partial\bar\partial\phi)=0$.
Since $\mu_t\bot\ker\partial\bar\partial$,
\begin{equation}\lab{306}
(\bar\partial^\ast\partial^\ast\gamma_t-i\mu_t)\bot \ker
\partial\bar\partial.
\end{equation}
Combining (\ref{305}) and (\ref{306}), we obtain
$\bar\partial^\ast\partial^\ast\gamma_t-i\mu_t=0$, which is the
first identity in (\ref{304}). The second in (\ref{304}) follows
from the second equality of (\ref{305}), since
\begin{equation*} 0=\int_{X_t}\langle \partial^\ast(\bar\partial\bar\partial^\ast+1)
\partial\gamma_
t,\gamma_t\rangle=\int_{X_t}(| \bar\partial^\ast\partial\gamma_t|
^2+|
\partial\gamma_t| ^2).
\end{equation*}

Summarizing

\begin{lemm}\lab{eq-10}
We let $\gamma_t$ be the unique solution to
$E_t(\gamma_t)=-\partial\Theta_t^{1,3}$ subject to the constraint
$\gamma_t\bot \ker E_t$. Then $\gamma_t$ satisfies
$\partial\gamma_t=0$ and the $\theta_t$ defined in \eqref{Omega} is
of the form
{$\theta_t=\partial\bar\partial\sta\partial\sta\gamma_t$.}
\end{lemm}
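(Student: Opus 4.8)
The plan is to deduce both assertions from the single identity $E_t(\gamma_t)=i\partial\bar\partial\mu_t$, which is obtained by combining the definition \eqref{303} of $\gamma_t$ with \eqref{301}. First I would check that the right-hand side of \eqref{303} is admissible data, i.e.\ $-\partial\Phi_t^{1,3}\perp\ker E_t$: if $\phi\in\ker E_t$ then the Kodaira--Spencer characterization \eqref{302} gives $\bar\partial^\ast\partial^\ast\phi=0$, so
\[
(-\partial\Phi_t^{1,3},\phi)=(i\partial\bar\partial\mu_t,\phi)=(i\mu_t,\bar\partial^\ast\partial^\ast\phi)=0 .
\]
Since $E_t$ is self-adjoint and strongly elliptic of order $4$ \cite{KS}, this produces the unique smooth solution $\gamma_t\perp\ker E_t$.

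Next I would expand $E_t\gamma_t-i\partial\bar\partial\mu_t=0$ using the definition of $E_t$ and regroup the terms as
\[
\partial\bar\partial\bigl(\bar\partial^\ast\partial^\ast\gamma_t-i\mu_t\bigr)+\partial^\ast(\bar\partial\bar\partial^\ast+1)\partial\gamma_t=0 .
\]
The key observation is that the two summands are $L^2$-orthogonal: writing $A=\bar\partial^\ast\partial^\ast\gamma_t-i\mu_t$ and $B=(\bar\partial\bar\partial^\ast+1)\partial\gamma_t$, one has $(\partial\bar\partial A,\partial^\ast B)=(\partial^2\bar\partial A,B)=0$ because $\partial^2=0$. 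Hence taking the squared $L^2$-norm of the displayed identity forces each summand to vanish separately:
\[
\partial\bar\partial\bigl(\bar\partial^\ast\partial^\ast\gamma_t-i\mu_t\bigr)=0 \and \partial^\ast(\bar\partial\bar\partial^\ast+1)\partial\gamma_t=0 .
\]
From the second relation, pairing with $\gamma_t$ and integrating by parts yields $\|\bar\partial^\ast\partial\gamma_t\|^2+\|\partial\gamma_t\|^2=0$, so $\partial\gamma_t=0$. For the first relation I would note that $\bar\partial^\ast\partial^\ast\gamma_t$ is orthogonal to $\ker\partial\bar\partial$, since $(\bar\partial^\ast\partial^\ast\gamma_t,\phi)=(\gamma_t,\partial\bar\partial\phi)=0$ for $\phi\in\ker\partial\bar\partial$, while $\mu_t\perp\ker\partial\bar\partial$ by \eqref{301}; thus $A$ lies in $\ker\partial\bar\partial$ and is orthogonal to it, forcing $A=0$, i.e.\ $i\mu_t=\bar\partial^\ast\partial^\ast\gamma_t$. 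Consequently $\theta_t=i\partial\mu_t=\partial(i\mu_t)=\partial\bar\partial^\ast\partial^\ast\gamma_t$, the asserted formula.

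I expect the only genuinely delicate point to be the orthogonal splitting of $E_t\gamma_t-i\partial\bar\partial\mu_t$ into its two summands; once the cross term is seen to vanish via $\partial^2=0$, the rest is the Fredholm theory for $E_t$ (available verbatim from \cite{KS}), one integration by parts, and the two orthogonality-to-$\ker\partial\bar\partial$ facts. I would also keep an eye on the bidegree bookkeeping on the threefold $X_t$ — that $\partial^\ast$ and $\bar\partial^\ast$ move $(2,3)$-forms into exactly the spaces where $\partial\bar\partial\bar\partial^\ast\partial^\ast$ and $\partial^\ast\bar\partial\bar\partial^\ast\partial$ are defined, and that $B$ is of type $(3,3)$ so the cross-term manipulation is legitimate — but this is purely mechanical.
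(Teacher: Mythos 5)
Your argument is correct and follows the paper's own proof essentially verbatim: the orthogonality check $-\partial\Phi_t^{1,3}\perp\ker E_t$, the regrouping of $E_t\gamma_t-i\partial\bar\partial\mu_t=0$ into the two summands, their separate vanishing, the integration by parts giving $\partial\gamma_t=0$, and the two orthogonality-to-$\ker\partial\bar\partial$ facts forcing $i\mu_t=\bar\partial^\ast\partial^\ast\gamma_t$. The only difference is that you make explicit (via $\partial^2=0$) the cross-term cancellation that the paper compresses into the phrase ``by taking the $L^2$-norm of the left hand side,'' which is a welcome clarification rather than a deviation.
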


Because of this Lemma, we can apply elliptic estimate to bound the
norm of $\gamma_t$ by that of $\partial\Theta_t^{1,3}$. We first
check that for any given $0<c<\frac 1 2$, $E_t$ converges uniformly
to the $E_0$ on $X_0[c]$. Since $E_t$ depends on the complex
structure of $X_t$ and the Hodge star operator of the background
metric $\omega_t$, it depends on the derivatives of the components
of $\omega_t$ of order at most four. By Lemma \ref{control} and the
discussion following the Lemma, for $c>0$, over $X_t[c]$ the
Hermition forms $\omega_t$ converges to $\omega_0$ in $C^4$. Then
because for any $0<c<\frac 1 2$ and $t$ sufficiently small, the
Riemannian manifolds with boundaries $(X _t[c], \omega_t)$ have
uniform geometry, there is a constant $C$ independent of $t$ small
so that
\begin{equation}\lab{307}
\norml\gamma_t\normr_{L^2_4\bl X_t[2c]\br} \leq
C\Bigl(\norml\gamma_t\normr_{L^2\bigl(X_
t[c]\bigr)}+\norml\partial\Theta_t^{1,3}\normr_{L^2\bigl(X_t[c]\bigr)}\Bigr).
\end{equation}

To proceed, we argue that the quantity $\int_{ X_t}|
\partial\Theta^{1,3}_t| ^2$ is bounded by $C|t|^2$ for some constant
$C$.  Indeed,  using the explicit expression
\eqref{Phi}, $\Theta_t^{1,3}$ has compact support contained in
$X_t[\frac 1 2]$ and only depend on $\psi_t$ and the complex
structure of $X_t$. Because $\psi_t$ are smooth in $t$ and the
complex structure are also smooth on $X_t[\frac 1 2]$,
$\partial\Theta_t^{1,3}$ are smooth in $t$. Then because $\Theta_0=\Omega_0$
is of type $(2,2)$ and $d$-closed, we have
$\partial\Theta_0^{1,3}=(\partial\Theta_0)^{2,3}=0$ and therefore
\begin{equation}\lab{310}
\sup_{z\in X_t} \norml \partial\Theta_t^{1,3}(z)\normr<C| t|.
\end{equation}
This provides a bound on the last term in the inequality
\eqref{307}.


\va

Proposition \ref{po} will follow from the following stronger estimate.

\begin{prop}\lab{7}
For any $ \kappa>-\frac  4 3$,
\begin{equation*}
  \lim_{t\to
0}\bigl(| t|^{\kappa}\sup_{X_t}| \theta_t|
^2_{\omega_t}\bigr)=0.
\end{equation*}
\end{prop}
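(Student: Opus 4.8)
\emph{Proof plan.} By Lemma \ref{eq-10} it suffices to bound $\theta_t=\partial\dbar^\ast\partial^\ast\gamma_t$, where $\gamma_t\perp\ker E_t$ solves $E_t\gamma_t=-\partial\Phi_t^{1,3}$ and satisfies $\partial\gamma_t=0$. The plan is to split $X_t$ into the fixed-size conifold neighborhood $V_t(c)$ of the nodes (with $c<\ha$) and its complement, to bound $\sup|\theta_t|^2_{\omega_t}$ on each piece, and then to assemble the two bounds by keeping track of the powers of $|t|$. The tools already in hand are: the interior elliptic estimate \eqref{307}, valid on $X_t[2c]$ with a constant uniform in small $t$ since $(X_t[c],\omega_t)$ has uniform geometry; the pointwise estimate \eqref{310}, together with the smooth dependence on $t$, away from the nodes, of $\Phi_t^{1,3}$, of the diffeomorphisms $x_t$, and of the complex structure of $X_t$; and the fact that over $V_t(c)$, where \eqref{metric2} holds, the metric $\omega_t=\omega_{co,t}$ is, by the scaling of the potential \eqref{pot}, isometric---after the substitution $w\mapsto|t|^{-1/2}w$---to $|t|^{2/3}$ times a fixed asymptotically conical Ricci-flat metric on the deformed conifold.

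\emph{A global $L^2$ bound.} First I would establish $\norml\gamma_t\normr_{L^2(X_t,\omega_t)}\le C|t|$. Pairing the equation with $\gamma_t$, integrating by parts and using $\partial\gamma_t=0$ gives $\norml\dbar^\ast\partial^\ast\gamma_t\normr_{L^2}^2=(-\partial\Phi_t^{1,3},\gamma_t)\le\norml\partial\Phi_t^{1,3}\normr_{L^2}\,\norml\gamma_t\normr_{L^2}$, so it is enough to have a uniform Poincar\'e inequality $\norml\gamma_t\normr_{L^2}\le C\norml\partial\Phi_t^{1,3}\normr_{L^2}$ on the orthogonal complement of $\ker E_t$. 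I would prove this by contradiction, using the vanishing of the relevant $L^2$-cohomology: a normalized sequence saturating it would subconverge either to a nonzero $L^2$-harmonic $(2,3)$-form on the central fibre, or---after the conifold rescaling of the next step---to a nonzero $L^2$-harmonic $(2,3)$-form on the deformed conifold, and both vanish, the latter by the same Hartogs-and-K\"ahlerness argument already used above to prove $H^0(T_{X_t})=0$. Since $\partial\Phi_t^{1,3}$ is supported in $X_t[\ha]$, which has bounded volume, \eqref{310} yields $\norml\partial\Phi_t^{1,3}\normr_{L^2}\le C|t|$, hence the asserted bound and, with it, $\norml\dbar^\ast\partial^\ast\gamma_t\normr_{L^2(X_t,\omega_t)}\le C|t|$.

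\emph{The two regions.} On $X_t[2c]$ the metrics have uniform geometry and the source $\partial\Phi_t^{1,3}$ is smooth in $t$ and $O(|t|)$ in every order, so bootstrapping \eqref{307} and applying Sobolev embedding in real dimension $6$ gives $\sup|\theta_t|^2_{\omega_t}\le C|t|^2$ away from a fixed neighborhood of the nodes---much more than needed there. On $V_t(c)$ the source vanishes, so $E_t\gamma_t=0$; rescaling the metric by $|t|^{-2/3}$ turns $\gamma_t$ into a form, harmonic for the fourth-order part of the rescaled operator, on a ball of radius $\sim|t|^{-1/3}$ in the fixed deformed conifold, with prescribed data near its outer boundary and with rescaled $L^2$-norm $\le C|t|^{5/3}$ by the previous step and the scaling of the $L^2$-norm. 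Interior elliptic estimates with constants uniform in $t$ (the rescaled geometries being uniformly controlled and converging to that of the deformed conifold), together with the decay of such harmonic forms toward the tip of the cone---whose admissible rates are constrained by the vanishing of $L^2$-cohomology in this bidegree---control $\gamma_t$ and its first three derivatives, hence $\theta_t$, near the collapsing core $S^3$. Transporting the $|t|^{2/3}$ weight back through the rescaling and optimizing over the radial variable then shows that the worst contribution, from the core $S^3$ of size $|t|^{1/3}$, is $o(|t|^{-\kappa})$ for every $\kappa>-\tfrac43$; combining the two estimates proves the proposition.

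\emph{The main difficulty.} The technical heart---and the origin of the exact exponent $-\tfrac43$---is the near-node estimate: one must control $\gamma_t$ together with three of its derivatives uniformly while the core $S^3$ collapses at rate $|t|^{1/3}$. This forces the reduction to the fixed deformed conifold via the explicit Candelas--de la Ossa scaling, the use of the vanishing of the relevant $L^2$-cohomology both for the global Poincar\'e inequality and to exclude too-slowly-decaying radial modes near the tip, and tracking the metric weight $|t|^{2/3}$ through the rescaling precisely enough to land at $\kappa=-\tfrac43$.
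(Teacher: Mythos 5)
Your outline replaces the paper's argument by a two-step scheme (a uniform Poincar\'e inequality $\norml\gamma_t\normr_{L^2}\le C\norml\partial\Phi_t^{1,3}\normr_{L^2}$, then a conifold rescaling with indicial-root decay near the nodes), whereas the paper never asserts a uniform spectral gap for $E_t$: it proves only the weaker statement $\lim_{t\to0}|t|^{-2/3+\kappa}\int_{X_t}|\gamma_t|^2=0$ by a compactness/contradiction argument, and controls $\sup_{V_t(\frac14)}|\theta_t|^2$ by Moser iteration and weighted Bochner identities (Lemmas \ref{313}--\ref{est2}). Both of your key steps have genuine gaps. For the first: pairing $E_t\gamma_t=-\partial\Phi_t^{1,3}$ with $\gamma_t$ controls $\norml\dbar^\ast\partial^\ast\gamma_t\normr_{L^2}$, not $\norml\gamma_t\normr_{L^2}$; to upgrade this you need the first nonzero eigenvalue of the fourth-order operator $E_t$ to stay bounded below as the fibre degenerates, which is exactly the kind of statement that can fail for a degenerating family and which the paper is structured to avoid. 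Your one-sentence justification --- that a normalized blow-up limit ``vanishes by the same Hartogs-and-K\"ahlerness argument used for $H^0(T_{X_t})=0$'' --- does not apply: the limit is an $L^2$ $(2,3)$-form on the singular $X_{0,sm}$ lying in $\ker E_0$, not a holomorphic section on the smooth $Y$, so Hartogs extension is unavailable. Showing that this limit vanishes is precisely the second half of the paper's proof: one must establish $\dbar^\ast\partial^\ast\tilde\gamma_0=0$, then $\partial^\ast\tilde\gamma_0=0$ via cutoffs near the nodes using the weighted estimate of Lemma \ref{est2} and $H^{1,3}(Y)=0$, and finally $\tilde\gamma_0=0$ using Ohsawa's vanishing of the $L^2$-Dolbeault cohomology $H^{3,2}_{(2)}$ of the punctured cone together with $H^{3,2}(Y)=0$; one must also show no $L^2$ mass escapes to the nodes (Lemma \ref{est}) so that the limit is nonzero. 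None of this is supplied by your sketch.

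For the second step: after rescaling, $V_t(c)$ becomes a ball of radius $\sim|t|^{-1/2}$ in the fixed deformed conifold, and passing from an $L^2$ bound supported near the outer boundary to a $C^0$ bound on $\theta_t$ at the core means propagating estimates across an asymptotically conical annulus whose conformal size blows up as $t\to0$. ``Interior elliptic estimates with uniform constants'' do not accomplish this on their own; one needs either an actual computation of the admissible decay rates for $E$-harmonic $(2,3)$-forms on the cone (which you assert are ``constrained by $L^2$-cohomology'' but never determine), or the paper's substitute: the Michael--Simon Sobolev inequality on the minimal submanifold $\tilde U_t\sub\CC^4$ feeding a Moser iteration with weight $\bbr^{-4}$ (Lemma \ref{313}), plus weighted Bochner estimates using the curvature bound $|R|\lesssim\bbr^{-4/3}$ from the Appendix (Lemma \ref{HK}). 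The threshold $\kappa>-\frac43$ is produced by these specific weights via $\bbr^2\ge|t|$, so this is not a step that can be left schematic: as written, the proposal names the right ingredients but proves neither of the two estimates on which the proposition actually rests.
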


\begin{proof}
First, according to the Sobolev imbedding theorem, since $X_t[\frac
1 8]$ have uniform geometry and $E_t$ converges uniformly to $E_0$
on $X_0[\frac 1 8]$,  there is a constant $C$ independent of $t$ so
that for $p>6$,
\begin{equation*}
\parallel\!\!\gamma_t\!\!\parallel_{C^3(X_t[1/4])}\leq
C\bl\parallel\!\!\gamma_t\!\!\parallel_{L^2(X_t[1/8])}+\parallel\!\!\partial\Theta_t^{1,3}\!\!\parallel_{L^p(X_t[1/8])}\br.
\end{equation*}
Because of the identities in Lemma \ref{eq-10} and the inequality
(\ref{310}), there is a constant $C$ independent of $t$ so that
\begin{equation*}
\ \ \ \sup_{X_t[\frac 1 4]}| \theta_t| ^2\leq C\bl
|t|^2+\int_{X_t[\frac 1 8]} | \gamma_t|^2\br.
\end{equation*}
Multiplying by $| t|^{\kappa}$ on both sides, we get
\begin{equation}\lab{312}
\lim_{t\to   0}\bigl(| t|^{\kappa}\!\sup_{ X_t[\frac 1 4]}|
\theta_t| ^2\bigr)\leq C\lim_{t\to 0}| t|^{\kappa}\int_{X_t}|
\gamma_t| ^2.
\end{equation}
This provides us the bound we need over $X_t[\frac{1}{4}]$.

To control that over its complement, namely that inside $V_t(\frac{1}{4})$, we need the following two
Lemmas whose proofs will be postponed until the next section.

\begin{lemm}\lab{313}
There is a constant $C$ independent  of $t$ such that
\begin{eqnarray*}
\sup_{V_t(\frac 1 4)}| \theta_t| ^2\leq  C\int_{V_ t(\frac 1 4)}|
\theta_t| ^2\bbr^{-4}+C\sup_{X_t[\frac 1 4]}| \theta_t| ^2.
\end{eqnarray*}
\end{lemm}

\begin{lemm}\lab{HK}
There is a constant $C$ independent of $t$ such that
\begin{equation*}
\int_{V_t(\frac 1 4)}| \theta_t| ^2\bbr^{-\frac 8 3}\leq
C\int_{X_t[\frac 1 8]}(| \gamma_t| ^2+|
\partial\Theta^{1,3}_t| ^2).
\end{equation*}
\end{lemm}



We continue the proof of Proposition \ref{7}. Until the end of this
section, all constant $C$'s are independent of $t$; also when it
depends on some other data, like a choice of $\delta>0$, we shall
use $C(\delta)$ to indicate so.

Since $\bbr^2\geq |t|$ over $V_t(1)$, Lemma \ref{HK}
implies
\begin{eqnarray*}
\begin{aligned}
\int_{V_t(\frac 1 4)}| \theta_t| ^2\bbr^{-4} \leq C_1|
t|^{-\frac 2 3}\int_{X_t[\frac 1 8]}(| \gamma_t| ^2+|
\partial\Theta^{1,3}_t| ^2).
\end{aligned}
\end{eqnarray*}
Combined with Lemma \ref{313}, we have
\begin{eqnarray*}
\sup_{V_t(\frac 1 4)}| \theta_t| ^2\leq C_2|t| ^{-\frac 2
3}\int_{X_t[\frac 1 8]}(|\gamma_t|^2+|\partial\Theta^{1,3}_t|
^2)+C\sup_{X_t[\frac 1 4]}| \theta_t| ^2.
\end{eqnarray*}
Then multiplying $|t|^{\kappa}$ to both sides and taking limit
$t\to 0$, we find that by (\ref{310}) the second term on the right hand
vanishes since $-\frac 2 3+\kappa>-2$, and the third one can be
controlled by the first one  in view of (\ref{312}). So we get
\begin{equation*}\lab{011308}
\lim_{t\to    0}\Bigl(| t|^{\kappa}\sup_{V_t(\frac 1 4)}| \theta_t|
^2\Bigr) \leq C_3\lim_{t\to    0}| t|^{-\frac 2 3+\kappa}\int_{X_t}|
\gamma_t|^2.
\end{equation*}
Therefore by (\ref{312}) and the above inequality, should Proposition
\ref{7} fail we must have
\begin{equation*}
\overline{\lim}_{t\to    0}| t|^{-\frac 2 3+\kappa}\int_{X_t}|
\gamma_t| ^2>0.
\end{equation*}
In this case, there is a positive $\alpha>0$ and a sequence $t_i\to
0$ such that
\begin{equation*}
| t_i|^{-\frac 2 3+\kappa}\int_{X_{t_i}}| \gamma_{t_i}|
^2=\alpha_{i}^2\geq \alpha^2.
\end{equation*}
Normalizing
$\tilde\gamma_{t_i}=t_i^{-\frac 1 3+\frac \kappa
2}\alpha_{i}^{-1}\gamma_{t_i}$,
it satisfies
\begin{equation}\lab{314}
E_{t_i}(\tilde\gamma_{t_i})=-t_i^{-\frac 1 3 +\frac \kappa
2}\alpha_{i}^{-1}\partial\Theta_{t_i}^{1,3}
\end{equation}
and
\begin{equation}\lab{3014}
\int_{X_{t_i}}| \tilde\gamma_{t_i}| ^2=1.
\end{equation}
Since $-\frac 1 3+\frac \kappa 2>-1$, \eqref{310} implies that the
$C^0$-norm of the right hand side of (\ref{314}) uniformly goes to
zero when $i\to\infty$. Therefore by passing to a subsequence,
there exists a smooth $(1,3)$-form $\tilde\gamma_0$ on
$X_{0,\sm}$\footnote{$X_{0,\sm}$ is the smooth loci of $X_0$.} such
that $E_0(\tilde\gamma_0)=0$ and $\tilde\gamma_{t_i}\to\tilde
\gamma_0$ pointwise.

To make sure that the limit is non-trivial, we check that
$\norml \tilde\gamma_0\normr_{L^2}>0$.
For this, we need the following estimate that will be proved in the next section.

\begin{lemm}\lab{est}
For any $0<\iota<\frac 1 3$, there is a constant $C$ such that for
any $0<\delta<\frac 1 4$ and $|t|<\delta$,
\begin{equation*}
\int_{V_t(\delta)}| \gamma_t| ^2\bbr^{-\frac 4 3}\leq
C\delta^{2\iota}\int_{X_t[\frac 1 8]}(| \gamma_t| ^2+|
\partial\Theta_t^{1,3}| ^2).
\end{equation*}
\end{lemm}

We continue our proof of $\norml \tilde\gamma_0\normr_{L^2}>0$. By
(\ref{310}) and (\ref{3014}), for large $i$
\begin{equation}\lab{315}
\int_{V_ {t_i}(\delta)}| \tilde\gamma_{t_i}| ^2\bbr^{-\frac 4 3}\leq
C_4\delta^{2\iota}\int_{X_ {t_i}[\frac 1 8]}(| \tilde\gamma_{t_i}|
^2+ {t_i^{-\frac 2 3+\kappa}\alpha_i^{-2}}{|
\partial_{t_i}\Theta_{t_i}^{1,3}| ^2})\leq C_5\delta^{2\iota}.
\end{equation}
Letting $i\to\infty$ and using Lemma \ref{control}(2),
we get
\begin{equation}\lab{3151}
\int_{V_0(\delta)\sta}| \tilde \gamma_{0}| ^2\bbr^{-\frac 4
3}\leq C_5\delta^{2\iota},
\end{equation}
where $V_0(\delta)\sta=V_0(\delta)\setminus\{p_1,\cdots, p_l\}$.
Because of (\ref{3014}) and the pointwise convergence
$\tilde\gamma_{t_i}\to\tilde\gamma_0$ over $X_{0,\sm}$, we have
\begin{equation*}
\int_{X_{0,\sm}}| \tilde\gamma_0| ^2\geq 1-C_5\delta^{2\iota};
\end{equation*}
since $\delta$ is arbitrary, we obtain
\begin{equation}\lab{316}
\int_{X_{0,\sm}}| \tilde\gamma_0| ^2= 1.
\end{equation}

To obtain a contradiction to complete the proof of Proposition \ref{7}, we now show that
$\tilde\gamma_0=0$. We first show that
$\partial\sta\tilde\gamma_0=0$. Since $\partial\gamma_t=0$,
$$E_t(\gamma_t)=\partial\dbar\dbar^\ast\partial^\ast\gamma_t;
$$
consequently,
\begin{equation*}
\int_{X_t}| \bar\partial^\ast\partial^\ast\gamma_t|
^2=\int_{X_t}\langle E_t(\gamma_t),\gamma_t\rangle.
\end{equation*}
Substituting $\tilde\gamma_{t_i}$ and applying the H\"older
inequality, (\ref{3014}), (\ref{314}) and (\ref{310}), we obtain
\begin{eqnarray*}
\int_{X_{t_i}}| \bar\partial^\ast\partial^\ast\tilde\gamma_{t_i}| ^2
\leq \Bigl(\int_{X_{t_i}}| \tilde\gamma_{t_i}| ^2\Bigr)^{\frac 1
2}\Bigl(\int_{X_{t_i}}| E_{t_i}(\tilde\gamma_{t_i})| ^2\Bigr)^{\frac
1 2}\leq C_6|t_i| ^{\frac 2 3+\frac \kappa 2}.
\end{eqnarray*}
Taking $i\to\infty$ and noticing $\kappa>-\frac 4 3$, we get
$\dbar^\ast\partial^\ast\tilde\gamma_0=0$.

We next pick a cut-off function $\tau(s)$ that vanishes when
$s\leq 0$ and $\tau(s)=1$ when $s\geq 1$. For any $0<\delta<1$, we
put ${\bf s}_\delta=\frac {2\bbr-\delta}{\delta}$. (Note that
$\bbr$ is a function on $V_0(1)$ defined in (\ref{bbr}) and is equal
to $r\circ \xi_i^{-1}$.) We define
$$\tau_\delta=\tau(\bs_\delta).
$$
It vanishes in a small neighborhood of $\{p_1,\cdots,p_l\}\sub X_0$;
it takes value $1$ near the boundary of
$V_0(1)$. We then extend to a function on $X_0$ by assigning value $1$
elsewhere. We
still denote this extension by $\tau_\delta$.

Using (\ref{202}) and
(\ref{203}), over $V_0(\delta)\setminus V_0(\frac\delta 2)$ and for
a constant $C_7$ independent of $\delta$, we have
\begin{equation}\lab{322}
| \partial\tau_\delta| _{\omega_{\co,0}}^2=\frac 4 {\delta^2}|
\tau'(s)| ^2| \partial \bbr| _{\omega_{\co,0}}^2\leq C_7\bbr^{-\frac 4
3}.
\end{equation}

We now fix a $\delta_1<\frac 1 8$. Since
$\tau_{\delta_1}\partial^\ast\tilde\gamma_0$ has compact support, we
can view $\tau_{\delta_1}\partial^\ast\tilde\gamma_0$ as a
$(1,3)$-form on $Y$. Since
$H^{1,3}(Y,\mathbb{C})=0$, (cf. discussion preceding \eqref{301}), there exists a smooth $(1,2)$-form
$\varsigma_{\delta_1}$ on $Y$ such that
$$\tau_{\delta_1}\partial^\ast\tilde\gamma_0=\dbar\varsigma_{\delta_1}.
$$
Then for any $\delta<\frac {\delta_1} 2$, by integration by parts
and using $\dbar^\ast\partial^\ast\tilde\gamma_0=0$,
\begin{equation}\lab{323}
\int_{X_0}\tau_{\delta_1}| \partial^\ast\tilde\gamma_0| ^2
=\int_{X_0}\tau_\delta\tau_{\delta_1}|
\partial^\ast\tilde\gamma_0| ^2 =\int_{X_0}\tau_\delta\langle
\partial^\ast\tilde\gamma_0,\dbar\varsigma_{\delta_1}\rangle
=
\int_{X_0}
\langle\ast(\partial\tau_\delta\wedge\ast\partial^\ast\tilde\gamma_0),\varsigma_{\delta_1}\rangle.
\end{equation}
By the H\"older inequality and the definition of $\tau_\delta$, the
right hand side obeys
\begin{equation}\lab{324}
\int_{X_0}\langle\ast(\partial\tau_\delta\wedge\ast\partial^\ast
\tilde\gamma_0),\varsigma_{\delta_1}\rangle \leq
\Big(\int_{V(\delta)\setminus V(\frac \delta 2)}|
\partial\tau_\delta| ^2| \partial^\ast\tilde\gamma_0| ^2\Bigr)^{\frac
1 2}\Bigl(\int_{V(\delta)\setminus V(\frac\delta 2)}|
\varsigma_{\delta_1}| ^2\Bigr)^{\frac 1 2}.
\end{equation}
We then apply the following estimate to be proved in the next section:

\begin{lemm}\lab{est2} For any $0<\iota<\frac 1 3$, there is a constant $C$ such that for
any $0<\delta<\frac 1 4$ and any $|t|<\delta$,
\begin{equation*}
\int_{V_t(\delta)}| \partial^\ast\gamma_t| ^2\bbr^{-\frac 4
3}<C\delta^{2\iota}\int_{X_t[\frac 1 8]}(| \gamma_t| ^2+|
\partial\Theta_t^{1,3}| ^2).
\end{equation*}
\end{lemm}

From this Lemma, (\ref{310}) and (\ref{3014}), we obtain for large
$i$,
\begin{equation*}
\int_{V_ {t_i}(\delta)}| \partial^\ast\tilde\gamma_{t_i}|
^2\bbr^{-\frac 4 3}<C_8\delta^{2\iota}\int_{X_ {t_i}[\frac 1
8]}(|\tilde\gamma_{t_i}|^2+\alpha_i^{-2}| t_i| ^{-\frac 2 3+\kappa}|
\partial\Theta_{t_i}^{1,3}|^2)\leq C_8\delta^{2\iota},
\end{equation*}
where $C_8$ is independent of $\delta$. Taking limit $i\to
\infty$ and using Lemma \ref{control}(2), we get
\begin{equation*}
\int_{ V_0(\delta)\sta}|
\partial^\ast\tilde\gamma_0| ^2\bbr^{-\frac 4 3}\leq
C_8\delta^{2\iota}.
\end{equation*}
This inequality and (\ref{322}) imply
\begin{equation}\lab{319}
\int_{V_0(\delta)\setminus V_0(\frac \delta 2)}|
\partial\tau_\delta| ^2| \partial^\ast\tilde\gamma_0| ^2 \leq
C_9\delta^{2\iota}.
\end{equation}

Next, we denote by $\bU(\delta)$ the union of all neighborhoods
$U_i(\delta)$ of $E_i$ in $Y$, defined in the previous section for
$0<\delta<1$. Over $V_0(1)^\ast=V_0(1)-\{p_1,\cdots,p_l\}\cong \bU(1)\setminus \bE$
we have three metrics:
$$\omega_e=i\partial\bar\partial \bbr^2,\quad
\omega_{\co,0}=i\frac 3 2
\partial\bar\partial(\bbr^2)^{\frac 2 3}\and
\omega_{\co}.
$$
(Recall that $\omega_{\co,0}$ is the cone Ricci-flat metric and
$\omega_{\co}$ is the Ricci-flat metric on $\bU(1)$ (see
\eqref{cos}). Via isomorphism $\xi$,
$\xi^\ast(\omega_e)=i\partial\bar\partial r^2$ is a metric induced
from the Euclidean metric on $\mathbb C^4$.) Since all these metrics
are homogeneous, to compare them we only need to compare their
restrictions over a single point in one $E_i$, say at $z=0$.

We now compare the metrics $\omega_{\co,0}$ and $\omega_{e}$ by (\ref{203})
and (\ref{201}); compare the metrics $\omega_e$ and
$\omega_{\co}$ by (\ref{201}) and (\ref{209}). Since
$\varsigma_{\delta_1}$ is a $(1,2)$-form, the second factor in
(\ref{324}) fits into the inequalities
\begin{equation}\lab{le}
\int_{V_0(\delta)\setminus V_0(\frac\delta 2)}|
\varsigma_{\delta_1}| ^2 \leq  C \int_{V_0(\delta)\setminus
V_0(\frac \delta 2)}| \varsigma_{\delta_1}|
^2_{\omega_e}\text{vol}_{\omega_e} \leq
C_{10}\int_{V_0(\delta)\setminus V_0(\frac\delta 2)}|
\varsigma_{\delta_1}|
^2_{\omega_{\co}}\bbr^{-4}\text{vol}_{\omega_{e}}.
\end{equation}
Since $\varsigma_{\delta_1}$ and $\omega_{\co}$ are
smooth on $\bU(\delta_1)$, there exists a constant
$C_{11}(\delta_1)$, possibly depending on $\delta_1$, such that
$$\max_{\bU(\delta_1)}| \varsigma_{\delta_1}| ^2_{\omega_{\co}}\leq
C_{11}(\delta_1).
$$
Therefore
\begin{equation*}
\int_{V_0(\delta)\setminus V_0(\frac\delta 2)}|
\varsigma_{\delta_1}|^2_{\omega_{\co}}\bbr^{-4}\text{vol}_{\omega_e}\leq
C_{11}(\delta_1)\int_{\{\bbr=1\}}\int_{\frac \delta 2}^\delta \bbr
d\bbr dS\leq C_{12}(\delta_1)\delta^2,
\end{equation*}
where $dS$ is the volume element of the surface $\{\bbr=1\}$. Combined
with (\ref{le}), we get
\begin{equation*}
\int_{V_0(\delta)\setminus V_0(\frac\delta 2)}|
\varsigma_{\delta_1}| ^2 \leq C_{13}(\delta_1)\delta^2.
\end{equation*}
Then combined with (\ref{319}) and (\ref{324}), we obtain
\begin{equation*}
\int_{X_0}\langle\ast(\partial\tau_\delta\wedge\ast
\partial^\ast\tilde\gamma_0),\varsigma_{\delta_1}\rangle\leq
C_{14}(\delta_1)\delta^{1+\iota},
\end{equation*}
and with  (\ref{323}),
\begin{equation*}
\int_{X_0}\tau_{\delta_1}| \partial^\ast\tilde\gamma_0| ^2\leq
C_{15}(\delta_1)\delta^{1+\iota}.
\end{equation*}
Taking $\delta\to 0$ and then $\delta_1\to 0$, we get
$\int_{X_{0,\sm}}| \partial^\ast\tilde\gamma_0| ^2=0$; hence
$\partial^\ast\tilde\gamma_0=0$.
\\

It remains to show that $\tilde\gamma_0=0$. Since
$\partial\gamma_t=0$, we have $\partial\tilde\gamma_0=0$.
Let $\varphi_0\defeq \bar{\tilde\gamma}_0|_{V_0(\frac 1 4)\sta}$ be the complex conjugate. Then
$\bar\partial\varphi_0=\dbar^\ast\varphi_0=0$. On the other hand,
comparing the metrics (\ref{201}) and (\ref{203}), and using
(\ref{3151}),  we have
\begin{equation*}
\int_{V_0(\frac 1 4)\sta}| \varphi_0|
_{\omega_e}^2\text{vol}_{\omega_e}\leq C\int_{V_0(\frac 1 4)\sta}|
\varphi_0| ^2\bbr^{-\frac 4 3}<+\infty.
\end{equation*}
Therefore, $\varphi_0\in H^{3,2}_{(2)}\big(V_0(\frac 1
4)\sta,\omega_e\bigr)$ is an $L^2$-Dolbeault cohomology class of
$V_0(\frac{1}{4})\sta$, with respect to $\omega_e$.

We claim that this cohomology group vanishes. First, for any
$0<\delta<\frac 1 4$,
$V_0(\delta)\sta=V_0(\delta)\setminus\{p_1,\cdots, p_l\}$. If we let
$\tilde V_0(\delta)=\xi^{-1}(V_0(\delta))$, then $\tilde
V_0(\delta)$ is a disjoint union of $l$ copies of $\tilde
U_0(\delta)$, 
$$\tilde U_0(\delta)=\Bigl\{(w_1,\cdots, w_4)\in \mathbb C^4\mid
w_1^2+\cdots+w_4^2=0,r<\delta\Bigr\}.
$$
Let $\tilde\omega_e=i\partial\bar\partial r^2$ on $\tilde
U_0(\delta)^\ast=\tilde U_0(\delta)\setminus\{0\}$ be the metric
induced by the flat metric on $\CC^4$. From \cite{Oh}, we have
$\lim_{\delta\to 0}H^{3,2}_{(2)}\big(\tilde
U_0(\delta)\sta,\tilde\omega_e\bigr)=0$. Since
$\omega_e=\xi^\ast(\tilde\omega_e)$ via the isomorphism $\xi$ and
since $V_0(\delta)^\ast$ is a disjoint union of $l$ connected open
sets each of which is isomorphic to $\tilde U_0(\delta)^\ast$, we
also have $\lim_{\delta\to
0}H^{3,2}_{(2)}\big(V_0(\delta)\sta,\omega_e\bigr)=0$. Therefore,
 there exists a
$\delta_2<\frac 1 4$ and a $(3,1)$-form $\nu_0$ on
$V_0(\delta_2)^\ast$ such that $\dbar \nu_0=\varphi_0$ and
\begin{equation}\lab{327}
\int_{V_0(\delta_2)^\ast}|
\nu_0|^2_{\omega_e}\text{vol}_{\omega_e}<+\infty.
\end{equation}

Let
\begin{equation*}
\varphi_{\delta_2}=\varphi_0-\dbar\bigl((1-\tau_{\delta_2})\nu_0\bigr).
\end{equation*}
Then $\varphi_{\delta_2}$ has compact support in $X_{0,\sm}$ and
$\dbar\varphi_{\delta_2}=0$. By extension by $0$, we view
$\varphi_{\delta_2}$ as a $(3,2)$-form on $Y$. Since
$H^{3,2}(Y,\mathbb{C})=0$, which follows from $H^{0,1}(Y,\CC)=0$ and the Serre duality,
there exists a smooth function $\nu_{\delta_2}$ on $Y$
such that $\varphi_{\delta_2}=\dbar \nu_{\delta_2}$.

Now for any $\delta<\delta_2$, since $\dbar^\ast\varphi_0=0$,
\begin{equation}\lab{328}
\begin{aligned}
\int_{X_{0,\sm}}\tau_\delta| \varphi_0| ^2=&\int_{X_{0,\sm}}
\tau_\delta\langle\varphi_0,\varphi_0-\dbar((1-\tau_{\delta_2})\nu_0)
+\dbar((1-\tau_{\delta_2})\nu_0)\rangle\\
=&\int_{X_{0,\sm}}\tau_{\delta}\langle\varphi_0,\dbar(\nu_{\delta_2}+(1-\tau_{\delta_2})\nu_0)\rangle\\
=&\int_{X_{0,\sm}}
\langle\ast(\partial\tau_\delta\wedge\ast\varphi_0),\nu_{\delta_2}+(1-\tau_{\delta_2})\nu_0\rangle\\
\leq& C \Bigl(\int_{V_0(\delta)\setminus V_0(\frac \delta 2)} |
\partial\tau_\delta| ^2| \varphi_0| ^2\Bigr)^{\frac 1
2}\Bigl(\int_{V_0(\delta)\setminus V_0(\frac \delta 2 )}|
\nu_{\delta_2}| ^2+|\nu_0| ^2\Bigr)^{\frac 1 2}.
\end{aligned}
\end{equation}
Applying (\ref{3151}) and (\ref{322}), and adding
$\varphi_0=\bar{\tilde\gamma}_0|_{V_0(\frac1 4)\sta}$, we obtain
\begin{equation*}
\int_{V_0(\delta)\setminus V_0(\frac \delta 2)}|
\partial\tau_\delta| ^2| \varphi_0| ^2\leq C_{16}\delta^{2\iota},
\end{equation*}
where $C_{16}$ is independent of $\delta$. On the other hand, since
$\nu_{\delta_2}$ is a smooth form in $Y$ and $\omega_{\co}$ is a
smooth metric on $\bU(\delta_2)$, there exists a constants
$C_{17}(\delta_2)$, possibly depending on $\delta_2$, such that
$\max_{\bU (\delta_2)}|\nu_{\delta_2}|^2_{\omega_{\co}}\leq
C_{17}(\delta_2)$. This, (\ref{201}), (\ref{203}) and (\ref{209})
imply that
\begin{equation*}
\int_{V_0(\delta)\setminus V_0(\frac \delta 2)}| \nu_{\delta_2}|
^2\leq C_{18}\int_{V_0(\delta)\setminus V_0(\frac \delta
2)}|\nu_{\delta_2}|^2_{\omega_{\co}}\bbr ^{-\frac
{10}{3}}\text{vol}_{\omega_e}\leq C_{19}(\delta_2)\delta^{\frac 7
3}.
\end{equation*}
Applying (\ref{327}), we get
\begin{equation*}
\int_{V_0(\delta)\setminus V_0(\frac \delta 2)}|\nu_0|^2\leq
C\int_{V_0(\delta)\setminus V_0(\frac \delta
2)}|\nu_0|^2_{\omega_e}\bbr^{\frac 2 3}\text{vol}_{\omega_e}\leq
C_{20}\delta^{\frac 2 3}.
\end{equation*}

Substituting the above three inequalities  into (\ref{328}),  we get
\begin{equation*}
\int_{X_{0,\sm}}\tau_\delta| \varphi_0| ^2\leq
C_{21}(\delta_2)\delta^{\iota+\frac 1 3}.
\end{equation*}
Taking $\delta\to0$, since
$0< \iota< \frac 1 3$ and we have $\delta_2$ fixed, we obtain
$\int_{X_{0,\sm}}| \varphi_0| ^2=0$. This proves $\tilde\gamma_0=0$,
a contradiction that proves Proposition \ref{7}, and hence
Proposition \ref{po}.
\end{proof}

\section{Proofs of Lemmas \ref{313} to \ref{est2}}
\def\ti{\tilde}
\def\half{\frac{1}{2}}

We keep the notations introduced in the previous section. Among other
things, we have the subsets $\tilde U_i\sub \CC^4$, the
biholomorphic maps $\xi_i:\tilde U_i\to \mathcal U_i=\xi_i(\ti
U_i)\sub \cX$ and $V=\cup_{i=1}^l \mathcal U_i\sub\cX$. Using the
fiber $X_t$ of $\cX$ over $t\in\Delta$, we have biholomorphisms
$\xi_{i,t}: \ti U_{i,t}\to U_{i,t}\sub X_t$ and $V_t=\cup_{i=1}^l
U_{i,t}\sub X_t$.

Looking at the statements of Lemmas \ref{313} to \ref{est2}, they are of the form that terms of the form
$\int_{V_t(\half)}\cdot$ are bounded by a constant multiple of terms of the form $\int_{X_t[c]}\cdot$.
Since $V_t(\half)$
is a disjoint union of $l$ copies of $U_{i,t}(\half)$, by increase the multiple by $l$-fold, the Lemmas are
consequence of similar statement with $V_t(\half)$ replaced by $U_{i,t}(\half)$.

But then since all geometry of $U_{i,t}$ is alike, we only need to
prove the case where $V_t(\half)$ is replace by $U_{1,t}(\half)$.
For notational simplicity, we use $\ti U_t$ and $U_t$ to denote $\ti
U_{1,t}$ and $U_{1,t}$, respectively.


Over $\ti U_t$ ($=\ti U_{1,t}$) we have the CO-metric $\tilde{\omega}_{\co,t}\triangleq i\partial\bar\partial f_t(r^2)$,
where $f_t(s)$ is defined in (\ref{pot}). The CO-metric $\omega_{\co,t}$ on $U_{t}$ is such that
$\xi_{t}\sta(\omega_{\co,t})=\tilde\omega_{\co,t}$ (we use $\xi_t$ to denote $\xi_{1,t}=\xi|_{U_{1,t}}$);
the CO-metric on $V_t$ is $\omega_{\co,t}$ on each
$U_{t}$. The metrics $\omega_t$ on $X_t$ are deformation of $\omega_0$ away from the
singularities of $X_0$, and coincide with $\omega_{\co,t}$ over $U_t(\half)$.

One property of $\tilde \omega_{\co,t}$ we need is the following.
For  any $c$ such that $|t|^{\frac 1 2}<c<1$, the surface
$\{r=c\}\sub \tilde U_t$ is diffeomorphic to $S^2\times S^3$ and
$q=(\frac{\sqrt{c^2- t}}{\sqrt{2}},\frac{i\sqrt{c^2-
t}}{\sqrt{2}},0, t^{\frac 1 2})$ lies in this surface.
In the appendix, we prove that we can find a holomorphic coordinates
$(z_1,z_2,z_3)$ at $q$ such that the CO-metric has the form
\begin{equation}\lab{0401}
\tilde\omega_{\co,t}|_{q}=i\partial\dbar
f_t(r^2)|_q=i\sum_{\alpha=1}^3dz_\alpha\wedge d\bar z_\alpha;
\end{equation}
letting $\eta_t(s)=sf_t'(s)$, we have
\begin{equation}\lab{0402}
\partial\dbar r^2|_q=(r^2)^{\frac 1
3}\Bigl(\frac{r^4}{\eta_t^3(r^2)}\Bigr)^{\frac 1 3}\Bigr(dz_1\wedge
d\bar z_1+\frac 3 2\frac{\eta_t^3(r^2)}{r^4}dz_2\wedge d\bar
z_2+dz_3\wedge d\bar z_3\Bigr)
\end{equation}
and
\begin{equation}\lab{0403}
\partial r^2\wedge \dbar r^2|_q=\frac 3
2(r^2)^{\frac 4 3}\Bigl(\frac{\eta_t^3(r^2)}{r^4}\Bigr)^{\frac 2
3}\Bigl(1-\frac {t^2}{r^4}\Bigr)dz_2\wedge d\bar z_2.
\end{equation}

 In the appendix, we also prove that
$r^{-4}{\eta_t^3}{}$ is increasing over $[| t|,+\infty)$ and
\begin{equation} \lab{090705}
\lim_{r^2\to \mid t\mid}r^{-4}{\eta_t^3}=\frac 2 3,\quad
\lim_{r^2\to \infty}r^{-4}{\eta_t^3}=1.
\end{equation}

We let $R_{\alpha\bar \beta\gamma\bar \delta}$ be the curvature
tensor of $\tilde\omega_{\co,t}$ at $q$ in coordinate
$(z_1,z_2,z_3)$.  Let $|R_{\alpha\bar \beta\gamma\bar \delta}|$ be
its norm measured via the metric $\omega_t$. In the appendix,  we
prove

\begin{lemm}\lab{cur}
There exists a constant $C$ independent of $t$ and $r$ such that
\begin{equation*}
|R_{\alpha\bar \beta\gamma\bar \delta}|\leq Cr^{-\frac  4 3}.
\end{equation*}
\end{lemm}


Let $\tilde
\omega_e\triangleq i\partial\bar\partial r^2$ on $\tilde U_t$ be the
metric induced by the Euclidean metric on $\mathbb C^4$. Let the norm
and volume form defined by this metric be $| \cdot|
_{\tilde\omega_e}$ and $\text{vol}_{\tilde\omega_e}$. Comparing
(\ref{0401}) with (\ref{0402}), since $\tilde\omega_{\co,t}$ and
$\tilde\omega_e$ are both homogeneous, we have the relation at any
point in $\tilde U_t$:
\begin{equation}\lab{090201}
\text{vol}_{\tilde\omega_{\co,t}}=\frac 2
3r^{-2}\text{vol}_{\tilde\omega_e},
\end{equation}
and by \eqref{0401}, \eqref{0402} and the estimate \eqref{090705}, for any smooth function $f$
on $\tilde U_t$,
\begin{equation}\lab{090202}
|\nabla f| ^2_{\tilde\omega_e}\leq Cr^{-\frac 2 3}| \nabla f|
_{\tilde\omega_{\co,t}}^2.
\end{equation}
Here $|\nabla f|^2_\omega=g^{\bar \beta\alpha}\frac{\partial
f}{\partial z_\alpha}\frac{\partial f}{\partial \bar z_\beta}$ for
$\omega$ a hermitian metric having the form $\omega=g_{\alpha\bar
\beta}dz_\alpha\wedge d\bar z_\beta$, and $(g^{\bar\beta\alpha})$ is the inverse of $(g_{\alpha\bar\beta})$, that is $\sum_{\alpha}g^{\bar\beta\alpha}g_{\alpha\bar\gamma}=\delta^{\bar\beta}_{\bar\gamma}$.

We comment that the prior discussion applies to metrics
$\omega_{\co,t}$ on $U_t(\frac 1 2)$ since our chosen background
metric $\omega_t$ restricted to $U_t(\ha)$ is the CO-metric
$\tilde\omega_{\co,t}$ under the isomorphism $\xi_t$. By abuse of
notation, we shall also view $(z_1,z_2,z_3)$ as a local coordinate
of the point $\xi_t(q)\in U_t(\ha)$.

For simplicity, in the following we shall adopt the following
convention. Since we will work primarily over $X_t$, we will omit
the subscript $t$ in all the functions and forms that was used to
indicate the domain of definition. For instance, the form $\theta_t$
on $X_t$ will be abbreviated to $\theta$ when the domain manifold
$X_t$ is clear from the context. Also, we shall continue to use
$\omega_t$ to be our metric on $X_t$; thus
all norms and integrations without specification are with respect to
the metric $\omega_t$ and by the volume form of $\omega_t$. In case
we need to use a different metric, say with $\omega_e$, we will use
$|\cdot|_{\omega_e}$ and $\text{vol}_{\omega_e}$ to mean the
associated norm and volume form.

We let $\tau(\bbr)$ be a cut-off function defined on $V_t(1)$ such
that $\tau(\bbr)=1$ when $\bbr\leq \frac 1 4$ and $\tau(\bbr)=0$
when $\bbr\geq \frac 1 2$. We then extend it to $X_t$ by zero and denote by the same notation
$\tau(\bbr)$.

\begin{proof}[Proof of Lemma \ref{313}]
As commented, we only need to prove the statement of Lemma \ref{313} with
$V_t(\half)$ replaced by $U_t(\half)\defeq U_{1,t}(\half)$.
We fix a $t$ of small $|t|$. As commented, we use $\theta$ to denote
the $\theta_t$ in Lemma \ref{313}.

We introduce a sequence
$\beta_k=(\frac  3 2)^k$.
By the definition of
$\tau$ and (\ref{090201}), we have
\begin{eqnarray}\lab{090205}
\begin{aligned}
\int_{{U_t}(\frac 1 4)}| \theta| ^{2\beta_k}\bbr^{-4}\leq\frac 2
3\int_{{U_t}(\frac 1 2)}| \theta|
^{2\beta_k}\bbr^{-6}\tau^3\text{vol}_{\omega_e}.
\end{aligned}
\end{eqnarray}
The function $| \theta| ^{2\beta_k}\bbr^{-6}\tau^3$ is a non-negative
$C^\infty$-function with compact support contained in ${U_t}(\frac 1 2)$. Via
$\xi_t$, ${U_t}(\frac 1 2)$ is identified with a minimal submanifold in
$\mathbb{C}^4$ endowed with the Euclidean metric.

We quote Michael-Simon's Sobolev inequality \cite{MS} (independently
by Allard \cite{Al}): Let $M\subset\mathbb{R}^{m}$ be an
$n$-dimensional submanifold in the Euclidean $m$-space $\RR^m$, let
$H$ be its mean curvature vector, and let $f\in C^\infty_{0}(M)$ be
a nonnegative functions with compact support, then
\begin{equation*}\lab{090206}
\Bigl(\int_M
f^{\frac{n}{n-1}}\text{vol}\Bigr)^{\frac
{n-1}{n}}\leq C(n)\int_{M}(| \nabla f|+|
H|\cdot f)\text{vol}.
\end{equation*}

Applying this to the minimal submanifold $\tilde U_t\sub\CC^4$, and then applying the standard skill in page 156 of \cite{GT},
 for any nonnegative function $f$ on ${U_t}(\frac 1 2)$ with compact
support,  we see
\begin{equation*}
\Bigl(\int_{{U_t}(\frac 1 2)}f^3 \text{vol}_{\omega_e}\Bigr)^{\frac 1
3}\leq C\Bigl(\int_{{U_t}(\frac 1 2)}|\nabla
f|^2_{\omega_e}\text{vol}_{\omega_e}\Bigr)^{\frac 1 2},
\end{equation*}
where $C$ is a constant depending only on the dimension of
${U_t}(\frac 1 2)$.

Applying the volume comparison
\eqref{090201} and the norm comparison \eqref{090202} to the right hand side
of the above inequality, for $C_1$ a constant independent of $t$ we get
\begin{equation}\lab{090207}
\Bigl(\int_{{U_t}(\frac 1 2)}f^3 \text{vol}_{\omega_e}\Bigr)^{\frac 1
3}\leq C_1 \Bigl(\int_{{U_t}(\frac 1 2)}| \nabla
f|_{\omega_{\co,t}}^2\bbr^{\frac 4
3}\text{vol}_{\omega_{\co,t}}\Bigl)^{\frac 1 2}.
\end{equation}

We remark that in this section we shall use $C_i$ to denote
constants independent of $t$ and $k$. Since the exact sizes of these
constants are irrelevant, we will be very loose in keeping track of
them.

Applying (\ref{090207}) to the right hand side of (\ref{090205}) for
$f=| \theta| ^{\frac{2\beta_k}{3}}\bbr^{-2}\tau$, we have
$$
\Bigl(\int_{{U_t}(\frac{1}{4})} | \theta| ^{2\beta_k}
\bbr^{-4}\Bigr)^{\frac 2 3}   \leq \Bigl(\int_{{U_t}(\ha)}| \theta|
^{2\beta_k}\bbr^{-6}\tau^3\text{vol}_{\omega_e}\Bigr)^{\frac 2 3}
 \leq C_1^2\int_{{U_t}(\frac 1
2)}\bigl|\nabla(| \theta| ^{\frac{2\beta_k}{3}}\bbr^{-2}\tau)\bigr|^2\bbr^{\frac 4 3}
$$
\begin{equation}
\lab{090208}\qquad
\leq  3C_1^2\int_{{U_t}(\frac 1 2)}\bigl|\nabla| \theta|
^{\frac{2\beta_k}{3}}\bigr|^2\bbr^{-\frac {8}{3}}\tau^2
+3C_1^2\int_{{U_t}(\frac 1
2)}| \theta| ^{2\beta_{k-1}}| \nabla \bbr^{-2}| ^2\bbr^{\frac 4 3}\tau^2 +
\end{equation}
$$ +3C_1^2\int_{{U_t}(\frac 1 2)}| \theta|
^{2\beta_{k-1}}\bbr^{-\frac {8}{3}}| \nabla\tau| ^2.
$$

We use (\ref{0403}) to estimate the second term after the third ``$\le$'' in \eqref{090208}:
\begin{eqnarray}\lab{090210}
\begin{aligned}
\qquad \int_{{U_t}(\frac 1 2)}| \theta| ^{2\beta_{k-1}}| \nabla
\bbr^{-2}| ^2\bbr^{\frac 4 3}\tau^2 &\leq C_2 \int_{{U_t}(\frac 1
2)}| \theta| ^{2\beta_{k-1}}\bbr^{-4}\tau^2\\
&\leq C_2 \int_{{U_t}(\frac 1 4)}| \theta|
^{2\beta_{k-1}}\bbr^{-4}+4^{4}C_2\int_{X_t[\frac 1 4]}| \theta|
^{2\beta_{k-1}}.
\end{aligned}
\end{eqnarray}
From  the definition of $\tau$, the third term has an estimate:
\begin{eqnarray}\lab{090211}
\begin{aligned}
\int_{{U_t}(\frac 1 2)}| \theta| ^{2\beta_{k-1}}\bbr^{-\frac{8}{3}}|
\nabla\tau| ^2 \leq C_3 \int_{X_t[\frac 1 4]}| \theta|
^{2\beta_{k-1}}.
\end{aligned}
\end{eqnarray}

For the first term after the third ``$\le$'' in \eqref{090208}, we claim that for any $k\geq 1$,
\begin{equation}
\lab{090221}
\begin{aligned} &\int_{{U_t}(\frac 1 2)}\bigl|\nabla|
\theta| ^{\frac{2\beta_k}{3}}\bigr|^2\bbr^{-\frac 8 3}\tau^2\leq
-C_4\int_{{U_t}(\frac 1 2)}| \theta|
^{2\beta_{k-1}}\bigtriangleup_{\bar\partial}(\bbr^{-\frac 8
3}\tau^2)-
\\
&\qquad\qquad- \beta_{k-1}\int_{{U_t}(\frac 1 2)}| \theta|
^{2(\beta_{k-1}-1)}
g^{\bar\beta\alpha}(\langle\nabla_{\alpha}\nabla_{\bar\beta}\theta,\theta\rangle
+\langle\theta,\nabla_{\bar\alpha}\nabla_{\beta}\theta\rangle)
\bbr^{-\frac 8 3}\tau^2.\\
\end{aligned}
\end{equation}
Here we denote $\omega_t=\sum
g_{\alpha\bar\beta}dz_\alpha\wedge d\bar z_\beta$, (we have omitted the subscript $t$ in $g_{\alpha\bar\beta}$,) and denote the
inverse $(g_{\alpha\bar\beta})^{-1}$ of $(g_{\alpha\bar\beta})$ by
$(g^{\bar\alpha\beta})$.
We also denote $\bigtriangleup_{\bar\partial}=-g^{\bar
\beta\alpha}\frac{\partial^2}{\partial z_\alpha\partial \bar
z_\beta}$ and
$\bigtriangledown_\alpha=\bigtriangledown_{\frac{\partial}{\partial\alpha}}$
and so on.

We first prove the case $k\geq 3$. By direct calculation,
we have
\begin{equation}\lab{090213}
\int_{{U_t}(\frac 1 2)}\bigl|\nabla| \theta|
^{\frac{2\beta_k}{3}}\bigr|^2\bbr^{-\frac 8 3}\tau^2
=\frac{\beta_{k-1}^2}{4}\int_{{U_t}(\frac 1 2)}| \theta|
^{2(\beta_{k-1}-2)} \bigl|\nabla| \theta|
^2\bigr|^2\bbr^{-\frac 8 3}\tau^2.
\end{equation}
We compute
\begin{eqnarray*}
\begin{aligned}
&\beta_{k-1}(\beta_{k-1}-1)| \theta| ^{2(\beta_{k-1}-2)}\bigl|\nabla| \theta| ^2\bigr|^2\\
=&\beta_{k-1}| \theta| ^{2(\beta_{k-1}-1)}
\bigtriangleup_{\dbar}| \theta| ^2-\bigtriangleup_{\dbar}| \theta| ^{2\beta_{k-1}}\\
\leq&-\beta_{k-1}| \theta| ^{2(\beta_{k-1}-1)}g^{\bar\beta\alpha}
(\langle\nabla_\alpha\nabla_{\bar\beta}\theta,\theta\rangle
+\langle\theta,\nabla_{\bar\alpha}\nabla_\beta\theta\rangle)
-\bigtriangleup_{\dbar}| \theta| ^{2\beta_{k-1}}.
\end{aligned}
\end{eqnarray*}
Multiplying $\bbr^{-\frac  8 3}\tau^2$ to both sides of the above inequality
and then integrating over ${U_t}(\frac 1 2)$, since the CO-metric is
K\"ahler and $\tau^2$ vanishes outside ${U_t}(\frac 1 2)$,  we get
\begin{eqnarray*}
\begin{aligned}
&\beta_{k-1}(\beta_{k-1}-1)\int_{{U_t}(\frac 1 2)}| \theta|
^{2(\beta_{k-1}-2)}
\bigl|\nabla| \theta| ^2\bigr|^2\bbr^{-\frac 8 3}\tau^2\\
\leq&-\beta_{k-1}\int_{{U_t}(\frac 1 2)}| \theta|
^{2(\beta_{k-1}-1)}g^{\bar\beta\alpha}
(\langle\nabla_\alpha\nabla_{\bar\beta}\theta,\theta\rangle
+\langle\theta,\nabla_{\bar\alpha}\nabla_\beta\theta\rangle)\bbr^{-\frac
8 3}\tau^2 \\
&-\int_{{U_t}(\frac 1 2)}| \theta|
^{2\beta_{k-1}}\bigtriangleup_{\dbar}\bigl(\bbr^{-\frac 8
3}\tau^2\bigr).
\end{aligned}
\end{eqnarray*}
This and (\ref{090213}) prove \eqref{090221}.

For $k=2$, from
$
\bigtriangleup_{\dbar}| \theta| ^3=\frac 3 2 | \theta|
\bigtriangleup_{\dbar}| \theta| ^2- 3 | \theta|
\bigl|\nabla| \theta| \bigr|^2$,
a computation gives
\begin{eqnarray*}\lab{090219}
\begin{aligned}
\int_{{U_t}(\frac 1 2)}\bigl|\nabla| \theta| ^{\frac
{2\beta_2}{3}}\bigr|^2\bbr^{-\frac 8 3}\tau^2 \leq &\beta_1
\int_{{U_t}(\frac 1 2)}|  \theta| ^{2(\beta_1-1)}
\bigtriangleup_{\dbar}| \theta| ^2\bbr^{-\frac 8 3}\tau^2
-\int_{{U_t}(\frac 1 2)}| \theta|
^{2\beta_1}\bigtriangleup_{\dbar}(\bbr^{-\frac 8 3}\tau^2).
\end{aligned}
\end{eqnarray*}
This implies (\ref{090221}) in case of $k=2$.

For $k=1$, we need to estimate $\bigl|\nabla| \theta|
\bigr|^2$. When $| \theta| \not= 0$,
\begin{eqnarray*}\lab{090222}
\begin{aligned}
\bigl|\nabla| \theta| \bigr|^2=&\frac 1 4
| \theta| ^{-2} \bigl|\nabla | \theta| ^2\bigr|^2
\leq
2^{-1}g^{\bar\beta\alpha}\langle\nabla_\alpha\theta,\nabla_\beta\theta\rangle
+2^{-1}g^{\bar\beta\alpha}\langle\nabla_{\bar\beta}\theta,\nabla_{\bar\alpha}\theta\rangle\\
=&-2^{-1}\bigtriangleup_{\dbar }| \theta| ^2
-2^{-1}g^{\bar\beta\alpha}\langle\nabla_{\alpha}\nabla_{\bar\beta}\theta,\theta\rangle
-2^{-1}g^{\bar\beta\alpha}\langle\theta,\nabla_{\bar\alpha}\nabla_{\beta}\theta\rangle.
\end{aligned}
\end{eqnarray*}
When $|\theta|=0$, $|\bigtriangledown|\theta\parallel=0$ and
$-\bigtriangleup_{\bar\partial}\mid\theta\mid^2\geq 0$. We still
have above inequality. So (\ref{090221}) is valid for $k=1$ and
$\beta_0=1$.

Next we estimate the second term in (\ref{090221}) by using
Kodaira's Bochner formula (\cite{MK} p.119): for any $(p,q)$-form
$\psi=\frac{1}{p!q!}\sum \psi_{\alpha_1\cdots
\bar\beta_q}dz_{\alpha_1}\wedge\cdots\wedge d\bar z_{\beta_q}$,
\begin{eqnarray}\lab{090223}
\begin{aligned}
(\bigtriangleup_{\dbar}\psi)_{\alpha_1\cdots
\bar\beta_q}=&-\sum_{\alpha,\beta}
g^{\bar\beta\alpha}\nabla_\alpha\nabla_{\bar\beta}\psi_{\alpha_1\cdots
\bar\beta_q}\\
&+\sum_{i=1}^p\sum_{k=1}^q\sum_{\alpha,\beta}R^{\alpha\ \ \ \ \
\bar\beta}_{\ \ \alpha_i\bar\beta_k}\psi_{\alpha_1\cdots
\alpha_{i-1}\alpha\alpha_{i+1}\cdots
\bar\beta_{k-1}\bar\beta\bar\beta_{k+1}\cdots \bar\beta_q}\\
&-\sum_{k=1}^q\sum_{\beta}R_{\bar\beta_k}^{\ \
\bar\beta}\psi_{\alpha_1\cdots
\bar\beta_{k-1}\bar\beta\bar\beta_{k+1}\cdots \bar\beta_q}.
\end{aligned}
\end{eqnarray}
We use above formula to $\psi=\theta$ over ${U_t}(\frac 1 2)$.
Since $\theta=\partial\bar\partial^\ast\partial^\ast\gamma_t$,
$\partial\gamma_t=0$ and $\Theta_t^{1,3}|_{U_t(\frac 1 2)}=0$,
\begin{equation}
\bigtriangleup_{\bar\partial}\theta|_{U_t(\frac 1
2)}=(\bar\partial\bar\partial^\ast+\bar\partial^\ast\bar\partial)\theta|_{U_t(\frac
1 2)}=-\bar\partial^\ast E_t(\gamma_t)|_{U_t(\frac 1
2)}=-\bar\partial^\ast\partial\Theta^{1,3}_t |_{U_t(\frac 1 2)}=0.
\end{equation}
Then (\ref{090223}) and Lemma \ref{cur} imply
\begin{eqnarray*}\lab{090224}
\begin{aligned}
&-g^{\bar\beta\alpha}(\langle\nabla_{\alpha}\nabla_{\bar\beta}\theta,\theta\rangle
+\langle\theta,\nabla_{\bar\alpha}\nabla_{\beta}\theta\rangle)\\
= &
-g^{\bar\beta\alpha}(\langle\nabla_{\alpha}\nabla_{\bar\beta}\theta,\theta\rangle
+\langle\theta,\nabla_{\beta}\nabla_{\bar\alpha}\theta\rangle+
\langle\theta,[\nabla_{\bar\alpha},\nabla_{\beta}]\theta\rangle)
\leq C_5\bbr^{-\frac 4 3}| \theta| ^2,
\end{aligned}
\end{eqnarray*}
where $[\nabla_{\bar\alpha},
\nabla_{\beta}]=\nabla_{\bar\alpha}\nabla_\beta
-\nabla_{\beta}\nabla_{\bar\alpha}$ is the
curvature operator.

From the above inequality, we can estimate the second term after the inequality in
\eqref{090221}:
\begin{equation}\lab{090225}
\begin{aligned}
&- \beta_{k-1}\int_{{U_t}(\frac 1 2)}| \theta| ^{2(\beta_{k-1}-1)}
g^{\bar\beta\alpha}(\langle\nabla_{\alpha}\nabla_{\bar\beta}\theta,\theta\rangle
+\langle\theta,\nabla_{\bar\alpha}\nabla_{\beta}\theta\rangle)
\bbr^{-\frac 8 3}\tau^2\\
 \leq &\
C_5\beta_{k-1}\int_{{U_t}(\frac 1
2)}| \theta| ^{2\beta_{k-1}}\bbr^{-4}\tau^2\\
 \leq &\
C_5\beta_{k-1}\int_{{U_t}(\frac 1 4)}| \theta|
^{2\beta_{k-1}}\bbr^{-4}+4^4C_5\beta_{k-1} \int_{ X_t[\frac 1 4]} |
\theta| ^{2\beta_{k-1}}.
\end{aligned}
\end{equation}
From (\ref{0401}), (\ref{0402}) and (\ref{0403}),
\begin{equation*}
-\bigtriangleup_{\dbar} \bbr^{-\frac 8 3}\leq C_6 \bbr^{-4}.
\end{equation*}
Thus  the first term after the inequality in (\ref{090221}) has estimate
\begin{equation}\lab{090227}
\begin{aligned}
&-\int_{{U_t}(\frac 1 2)}| \theta|
^{2\beta_{k-1}}\bigtriangleup_{\bar\partial}(\bbr^{-\frac 8
3}\tau^2)
 \leq C_6\int_{{U_t}(\frac 1
4)}| \theta| ^{2\beta_{k-1}}\bbr^{-4}+4^4C_6\int_{X_ t[\frac 1 4]} |
\theta| ^{2\beta_{k-1}}.
\end{aligned}\end{equation}
Inserting (\ref{090225}) and (\ref{090227}) into (\ref{090221}), we
get
\begin{equation}\lab{090228}
\int_{{U_t}(\frac 1 4)}\bigl|\nabla| \theta|
^{\frac{2\beta_k}{3}}\bigr|^2\bbr^{-\frac 8 3}\tau^2 \leq
C_7\beta_{k-1}\Bigl(\int_{{U_t}(\frac 1 4)}| \theta|
^{2\beta_{k-1}}\bbr^{-4}+
\int_{X_t[\frac 1 4]} | \theta| ^{2\beta_{k-1}}\Bigr);
\end{equation}
inserting (\ref{090228}), (\ref{090210}) and (\ref{090211})
into (\ref{090208}), we obtain
\begin{equation*}\lab{090229}
\Bigl(\int_{{U_t}(\frac 1 4)}| \theta|
^{2\beta_k}\bbr^{-4}\Bigr)^{\frac {1}{\beta_k}} \leq
(C_7\beta_{k-1})^{\frac {1} {\beta_{k-1}}}\Bigl(\int_{{U_t}(\frac 1
4)}| \theta| ^{2\beta_{k-1}} \bbr^{-4}+ \int_{X_t[\frac 1 4]} |
\theta| ^{2\beta_{k-1}}\Bigr)^{\frac{1}{\beta_{k-1}}}.
\end{equation*}

So for any $k\geq 1$, the above inequality implies that either
\begin{equation*}
\Bigl(\int_{{U_t}(\frac 1 4)}| \theta|
^{2\beta_k}\bbr^{-4}\Bigr)^{\frac {1}{\beta_k}} \leq
\bl 2C_7\beta_{k-1}\br^{\frac {1} {\beta_{k-1}}}\Bigl(\int_{{U_t}(\frac 1
4)}| \theta| ^{2\beta_{k-1}} \bbr^{-4}\Bigr)^{\frac{1}{\beta_{k-1}}}
\end{equation*}
or
\begin{equation*}
\Bigl(\int_{{U_t}(\frac 1 4)}| \theta|
^{2\beta_k}\bbr^{-4}\Bigr)^{\frac {1}{\beta_k}} \leq
\bl 2C_7\beta_{k-1}\br^{\frac 1 {\beta_k-1}}\bl\text{vol}(X_t[1/4])\br^{\frac
{1} {\beta_{k-1}}}\sup_{X_t[\frac 1 4]}| \theta| ^2.
\end{equation*}
Since the volume of $X_t[\frac 1 4]$ can be controlled by a constant
independent of $t$, these two inequalities imply that for a constant $C_8$ independent of $t$ and $k$,
\begin{eqnarray*}
\begin{aligned}
\Bigl(\int_{{U_t}(\frac 1 4)}| \theta|
^{2\beta_k}\bbr^{-4}\Bigr)^{\frac {1}{\beta_k}} \leq
&\prod_{i=1}^{k-1}(C_8\beta_{i-1})^{\frac {1}
{\beta_{i-1}}}\Bigl(\int_{{U_t}(\frac 1 4)}| \theta| ^2
\bbr^{-4}+\sup_{X_t[\frac 1 4]}| \theta| ^2\Bigr).
\end{aligned}
\end{eqnarray*}
Taking limit
$k\to \infty$, we get the inequality in the statement of Lemma \ref{313}.
\end{proof}

\begin{proof}[Proof of Lemma \ref{HK}]
We keep the convention introduced in the proof of Lemma \ref{313}.
To streamline the notation, we assign the symbol $\Lambda_t$ to
$$\Lambda_t:= \int_{X_t[\frac 1 8]}(| \gamma| ^2+|
\partial\Phi^{1,3}| ^2).
$$
The Lemma \ref{HK} is then equivalent to that for a constant $C$ independent of $t$,
$$ \int_{{U_t}(\frac 1 4)}| \theta_t| ^2\bbr^{-\frac 8 3}\leq C\Lambda_t.
$$

To begin with, for a smooth positive function $\phi$, we define
$\dbar^\ast_\phi\zeta=\dbar^\ast\zeta-\ast(\partial\log\phi\wedge\ast\zeta)$,
$\nabla_\alpha^\phi=\nabla_\alpha+\partial_\alpha\log\phi$ and $X^{\
\bar\beta}_{\phi\
\bar\beta_k}=-g^{\bar\beta_k\alpha}\partial_{\bar\beta}\partial_{\alpha}\log\phi$.
We need another Kodaira's Bochner formula (\cite{MK}, p.124): For
any $(p,q)$-form $\zeta=\frac{1}{p!q!}\sum \zeta_{\alpha_1\cdots
\bar\beta_q}dz_{\alpha_1}\wedge\cdots\wedge d\bar z_{\beta_q}$,
\begin{eqnarray}\lab{090516}
\begin{aligned}
((\bar\partial\dbar^\ast_\phi+\dbar^\ast_\phi\dbar)\zeta)_{\alpha_1\cdots
\bar\beta_q}=&-\sum_{\alpha,\beta}
g^{\bar\beta\alpha}\nabla_\alpha^\phi\nabla_{\bar\beta}\zeta_{\alpha_1\cdots
\bar\beta_q}\\
+\sum_{i=1}^p&\sum_{k=1}^q\sum_{\alpha,\beta}R^{\alpha\ \ \ \ \
\bar\beta}_{\ \ \alpha_i\bar\beta_k}\zeta_{\alpha_1\cdots
\alpha_{i-1}\alpha\alpha_{i+1}\cdots
\bar\beta_{k-1}\bar\beta\bar\beta_{k+1}\cdots \bar\beta_q}\\
+\sum_{k=1}^q&\sum_{\beta}(X^{\ \bar\beta}_{\phi\
\bar\beta_k}-R_{\bar\beta_k}^{\ \ \bar\beta})\zeta_{\alpha_1\cdots
\bar\beta_{k-1}\bar\beta\bar\beta_{k+1}\cdots \bar\beta_q}.
\end{aligned}
\end{eqnarray}

We let
$\psi=\partial\partial^\ast\gamma$. Over ${U_t}(\ha)$, since the CO-metric is K\"ahler,
we have $\theta=\partial\bar\partial\sta\partial\sta \gamma=-\dbar^\ast\psi$.
We apply the Kodaira formula for $\phi=\phi_1=\bbr^{-\frac 8 3}$
and $\zeta=\psi$.
Since $\psi$ is a $(2,3)$-form and the CO-metric is Ricci flat,
\begin{equation}\lab{0001}
\int_{{U_t}(\frac 1 2)}\langle
\dbar\dbar^\ast_{\phi_1}\psi,\psi\rangle\phi_1\tau =-\int_{V_
t(\frac 1 2)}\langle
g^{\bar\beta\alpha}\nabla_\alpha^{\phi_1}\nabla_{\bar\beta}\psi,\psi\rangle\phi_1\tau
+\int_{{U_t}(\frac 1 2)}\sum_{\beta=1}^3 X^{\ \bar\beta}_{\phi_1\
\bar\beta}| \psi| ^2\phi_1\tau.
\end{equation}
Since $\tau$ has a compact support in ${U_t}(\frac 1 2)$, we compute
\begin{eqnarray*}\lab{090603}
\begin{aligned}
\int_{{U_t}(\frac 1
2)}\langle\dbar\dbar^\ast_{\phi_1}\psi,\psi\rangle\phi_1\tau
=&\int_{{U_t}(\frac 1
2)}\langle\dbar^\ast_{\phi_1}\psi,\dbar^\ast_{\phi_1}\psi\rangle\phi_1\tau+\int_{V_
t(\frac 1
2)}\langle\dbar\tau\wedge \dbar^\ast_{\phi_1}\psi,\psi\rangle\phi_1\\
\leq &\int_{{U_t}(\frac 1 2)}| \theta|
^2\phi_1\tau+\int_{{U_t}(\frac 1
2)}| \partial\log\phi_1\wedge\ast\psi| ^2\phi_1\tau\\
&-2\text{Re}\int_{{U_t}(\frac 1 2)}\langle
\ast(\partial\log\phi_1\wedge\ast\psi),\dbar^\ast\psi\rangle
\phi_1\tau+\cdots,
\end{aligned}
\end{eqnarray*}
where the dots denote terms that are integrations over $X_t[\frac 1
4]$ of smooth function including the derivatives of $\tau$. By
(\ref{307}), the dotted terms are bounded by a fixed multiple, independent of $t$, of
$\Lambda_t=\int_{X_t[\frac 1 8]}(| \gamma| ^2+|
\partial\Phi^{1,3}| ^2)$. In the remainder of this section, the term $C\Lambda_t$
will appear in various places for the same reason.

On the other hand, since $\dbar\dbar^\ast\psi=-\partial\Phi^{1,3}=0$
on ${U_t}(\frac 1 2)$,
\begin{eqnarray*}\lab{090602}
\begin{aligned}
\int_{{U_t}(\frac 1 2)}| \theta| ^2\phi_1\tau
=&\ \text{Re}\int_{{U_t}(\frac 1
2)}\langle\dbar^\ast_{\phi_1}\psi,\dbar^\ast\psi \rangle
\phi_1\tau+\text{Re}\int_{{U_t}(\frac 1 2)}\langle
\ast(\partial\log\phi_1\wedge\ast\psi),\dbar^\ast\psi\rangle
\phi_1\tau\\
\leq &\ \text{Re}\int_{{U_t}(\frac 1 2)}\langle
\ast(\partial\log\phi_1\wedge\ast\psi),\dbar^\ast\psi\rangle
\phi_1\tau+C_1\Lambda_t.
\end{aligned}
\end{eqnarray*}
We remark that the $C_1$ and the $C_i$ to appear later are all independent of $t$.
Combining the above two
inequalities, we get
\begin{eqnarray*}\lab{010901}
\begin{aligned}
\int_{{U_t}(\frac 1
2)}\langle\dbar\dbar^\ast_{\phi_1}\psi,\psi\rangle\phi_1\tau \leq
-\int_{{U_t}(\frac 1 2)}|  \theta| ^2\phi_1\tau+\int_{{U_t}(\frac
1 2)}|
\partial\log\phi_1\wedge\ast\psi| ^2\phi_1\tau+C_2\Lambda_t.
\end{aligned}
\end{eqnarray*}
Inserting the above inequality into (\ref{0001}) and applying
divergence theorem to the first term on the right hand side
(\ref{0001}), since $\psi$ is a $(2,3)$-form, we get
\begin{eqnarray}\lab{010902}
\begin{aligned}
\int_{{U_t}(\frac 1 2)}|  \theta| ^2\phi_1\tau\leq &
\int_{{U_t}(\frac 1 2)}\bigl(|
\partial\log\phi_1| ^2-\sum_{\beta=1}^3 X^{\ \bar\beta}_{\phi_1\
\bar\beta}\bigr)| \psi| ^2\phi_1\tau+C_3\Lambda_t.
\end{aligned}
\end{eqnarray}
According to  (\ref{0401})-(\ref{090705}), we have
\begin{equation*}
| \partial\log\phi_1| ^2\leq \frac 8 3 \bbr^{-\frac 4 3}\and
\sum_{\beta=1}^3 X^{\ \bar\beta}_{\phi_1\ \bar\beta}\geq \frac 8 3
\bbr^{-\frac 4 3}.
\end{equation*}
So from (\ref{010902}), we get
\begin{equation*}
\int_{{U_t}(\frac 1 4)}| \theta| ^2\bbr^{-\frac 4 3}\leq C_3\Lambda_t=
C_3\int_{X_t[\frac 1 8]}(| \gamma| ^2+|
\partial\Phi^{1,3}| )^2.
\end{equation*}
This proves Lemma \ref{HK}.
\end{proof}

\begin{proof}[Proof of Lemma \ref{est}]


For any $0<\iota<\frac 1 3$ and any $0<\delta<\frac 1 4$, by the
H\"older inequality,
\begin{equation*}
\int_{{U_t}(\delta)}| \gamma| ^2\bbr^{-\frac 4 3}\leq
\Bigl(\int_{{U_t}(\frac 1 4)}| \gamma| ^3\bbr^{- 3 \iota}\Bigr)^{\frac
2 3}\Bigl(\int_{U_ t(\delta)}\bbr^{-4+6\iota}\Bigr)^{\frac 1 3}.
\end{equation*}
 Clearly,
\begin{equation*}
\Bigl(\int_{{U_t}(\delta)}\bbr^{-4+6\iota}\Bigr)^{\frac 1 3}=
\Bigl(\frac 2 3 \int_{U_
t(\delta)}\bbr^{-6+6\iota}\text{vol}_{\omega_e}\Bigr)^{\frac 1 3}
\leq C\delta^{2\iota},
\end{equation*}
where the constant $C$ is independent on $t$ and $\delta$.
So to prove the Lemma we only need to prove that for a constant $C$ independent of
$t$,
\begin{equation}\lab{090504}
\Bigl(\int_{{U_t}(\frac 1 4)}| \gamma| ^3\bbr^{- 3\iota}\Bigr)^{\frac
2 3}\leq C\int_{X_t[\frac 1 8]}(| \gamma| ^2+| \partial\Phi^{1,3}|
^2).
\end{equation}

We will prove the above inequality in three steps.
Our first step is to establish the inequality
\begin{equation}\lab{011003}
\Bigl(\int_{{U_t}(\frac 1 4)}| \gamma|
^3\bbr^{-3\iota}\Bigr)^{\frac 2 3} \leq 8\int_{{U_t}(\frac 1 2)}|
\dbar^\ast\gamma| ^2\bbr^{-2\iota}\tau^2+C_1\int_{{U_t}(\frac 1
2)}| \gamma| ^2\bbr^{-2\iota-\frac 4 3}\tau^2+C_1\Lambda_t.
\end{equation}

We now prove this inequality. Using the method in deriving
(\ref{090208}) and (\ref{090221}) for $k=1$, we get
\begin{equation}\lab{090505}
\begin{aligned}
\Bigl(\int_{{U_t}(\frac 1 4)}| \gamma| ^3\bbr^{- 3\iota}\Bigr)^{\frac
2 3} &\leq  C_2\int_{{U_t}(\frac 1 2 )}\bigl|\nabla| \gamma|
\bigr|^2\bbr^{-2\iota}\tau^2+\\
& +C_2\int_{{U_t}(\frac 1 2)}| \gamma| ^2|  \nabla
\bbr^{-\iota-\frac 2 3 }| ^2 \bbr^{\frac 4 3}\tau^2 +
C_2\int_{{U_t}(\frac 1 2)}|  \gamma| ^2\bbr^{-2\iota}|
\nabla\tau| ^2\\
&\leq C_3\int_{{U_t}(\frac 1 2 )}g^{\bar\beta\alpha}(\langle
\nabla_\alpha\gamma,\nabla_\beta\gamma\rangle+\langle\nabla_{\bar\beta}\gamma,
\nabla_{\bar\alpha}\gamma\rangle)\bbr^{-2\iota}\tau^2\\
&\quad+C_3\int_{{U_t}(\frac 1 2)}| \gamma| ^2\bbr^{-2\iota-\frac 4
3}\tau^2 +C_3\Lambda_t.
\end{aligned}
\end{equation}

Let $\phi_2=\bbr^{-2\iota}$ and $\phi_3=\bbr^{-2\iota-\frac 4  3}$.
By divergence theorem,
\begin{equation}\lab{090508}
\begin{aligned}
&\int_{{U_t}(\frac 1
2)}g^{\bar\beta\alpha}\bigl(\langle\nabla_\alpha\gamma,
\nabla_\beta\gamma\rangle+\langle\nabla_{\bar\beta}\gamma,\nabla_{\bar\alpha}\gamma\rangle\bigr)\phi_2\tau^2\\
\leq &-2\int_{{U_t}(\frac 1
2)}g^{\bar\beta\alpha}\langle\nabla_\alpha^{\phi_2}\nabla_{\bar\beta}
\gamma,\gamma\rangle\phi_2\tau^2+\int_{{U_t}(\frac 1 2)}\langle
g^{\bar\beta\alpha}[\nabla_{\bar\beta},
\nabla_{\alpha}]\gamma,\gamma\rangle\phi_2\tau^2+C_4\Lambda\\
&+\int_{{U_t}(\frac 1 2)}g^{\bar\beta\alpha}\langle\partial_\alpha
\log\phi_2\cdot\nabla_{\bar\beta}
\gamma,\gamma\rangle\phi_2\tau^2-\int_{{U_t}(\frac 1
2)}g^{\bar\beta\alpha}\langle\nabla_\alpha\gamma,\partial_\beta(\phi_2\tau^2)\gamma\rangle.
\end{aligned}
\end{equation}
To bound the four terms after the inequality, we use that the
curvature is bounded by $C\bbr^{-\frac 4 3}$ to the second item, and
apply the H\"older inequality to the last two items. After
simplification, we get
\begin{equation}\lab{090515}
\begin{aligned}& \int_{{U_t}(\frac 1 2 )}g^{\bar\beta\alpha}(\langle
\nabla_\alpha\gamma,\nabla_\beta\gamma\rangle+\langle\nabla_{\bar\beta}\gamma,
\nabla_{\bar\alpha}\gamma\rangle)\phi_2\tau^2\\
\leq &-4\int_{{U_t}(\frac 1
2)}g^{\bar\beta\alpha}\langle\nabla_\alpha^{\phi_2}\nabla_{\bar\beta}
\gamma,\gamma\rangle\phi_2\tau^2+C_4\int_{{U_t}(\frac 1 2)}| \gamma|
^2\phi_3\tau^2+C_4\Lambda_t.
\end{aligned}
\end{equation}

We now deal with the first term after ``$\le$'' in the above inequality. We
use  Kodaira's  formula (\ref{090516}) to the case $\zeta=\gamma$ and $\phi=\phi_2$.
Since $\gamma$ is a $(2,3)$-form and CO-metric is Ricci flat, (\ref{090516}) reduces to
\begin{eqnarray*}\lab{090517}
\begin{aligned}
-\sum_{\alpha,\beta}
g^{\bar\beta\alpha}\nabla_\alpha^{\phi_2}\nabla_{\bar\beta}\gamma=
\bar\partial\dbar^\ast_{\phi_2}\gamma-\sum_{\beta=1}^3X^{\ \
\bar\beta}_{\phi_2\ \bar\beta}\gamma.
\end{aligned}
\end{eqnarray*}
So we get
\begin{equation*}\lab{090518}
-\int_{{U_t}(\frac 1 2)}g^{\bar\beta\alpha}\langle
\nabla_\alpha^{\phi_2}\nabla_{\bar\beta}\gamma,\gamma\rangle\phi_2\tau^2
= \int_{{U_t}(\frac 1
2)}\langle\bar\partial\dbar^\ast_{\phi_2}\gamma,\gamma\rangle\phi_2\tau^2-\int_{V_
t(\frac 1 2)}\sum_{\beta=1}^3X^{\ \ \bar\beta}_{\phi_2 \
\bar\beta}| \gamma| ^2\phi_2\tau^2.
\end{equation*}
By the H\"older inequality, we estimate
\begin{equation*}\lab{090519}
\begin{aligned}
\int_{{U_t}(\frac 1 2)}\langle\dbar\dbar^\ast_{\phi_2}\gamma,\gamma \rangle\phi_2\tau^2
=&\int_{{U_t}(\frac 1
2)}\langle\dbar^\ast_{\phi_2}\gamma,\dbar^\ast_{\phi_2}\gamma
\rangle\phi_2\tau^2
-\int_{{U_t}(\frac 1 2)}\langle \dbar\tau^2\wedge\dbar^\ast_{\phi_2}\gamma,\gamma\rangle\phi_2\\
\leq &\,2\!\int_{{U_t}(\frac 1 2)}| \dbar^\ast\gamma| ^2\phi_2\tau^2
+2\int_{{U_t}(\frac 1 2)}|
\partial\log{\phi_2}| ^2| \gamma| ^2\phi_2\tau^2+C_5\Lambda_t.
\end{aligned}
\end{equation*}
Putting together, we get
\begin{equation}\lab{300}
\begin{aligned}
-\int_{{U_t}(\frac 1 2)}g^{\bar\beta\alpha}\langle
\nabla_\alpha^{\phi_2}\nabla_{\bar\beta}\gamma,\gamma\rangle\phi_2\tau^2&
\leq  \int_{{U_t}(\frac 1 2)}\bigl(2|
\partial\log{\phi_2}| ^2-\sum X^{\ \ \bar\beta}_{\phi_2 \
\bar\beta}\bigr)
| \gamma| ^2\phi_2\tau^2+\\
&\quad +2\int_{{U_t}(\frac 1 2)}| \dbar^\ast\gamma|
^2\phi_2\tau^2+C_5\Lambda_t.
\end{aligned}
\end{equation}
On the other hand, by direct calculation,
\begin{equation*}\lab{090520}
| \partial\log{\phi_2} | ^2\leq\frac 3 2 \iota^2  \bbr^{-\frac
4 3}\and
\sum X^{\ \ \bar\beta}_{\phi_2\ \bar\beta}\geq 2\iota \bbr^{-\frac 4
3}.
\end{equation*}
So inequality (\ref{300}) implies
\begin{equation}\lab{011001}
\begin{aligned}
&-\int_{{U_t}(\frac 1 2)}g^{\bar\beta\alpha}\langle
\nabla_\alpha^{\phi_2}\nabla_{\bar\beta}\gamma,\gamma\rangle\phi_2\tau^2\\
\leq &2\int_{{U_t}(\frac 1 2)}| \dbar^\ast\gamma|
^2\phi_2\tau^2+\int_{{U_t}(\frac 1 2)}(3\iota^2-2\iota) | \gamma|
^2\phi_3\tau^2+C_5\Lambda_t.
\end{aligned}
\end{equation}
Finally, inserting (\ref{011001}) into (\ref{090515}) and then inserting (\ref{090515})
into (\ref{090505}), since $0<\iota<\frac 1 3$, we complete our first step in establishing the inequality \eqref{011003}.

Our second step is to prove
\begin{eqnarray}\lab{090528}
\begin{aligned}
\int_{{U_t}(\frac 1 2)}| \gamma| ^2\phi_3\tau^2 \leq
&\frac{2}{2\iota-3\iota^2}\int_{{U_t}(\frac 1 2)}| \dbar^\ast\gamma|
^2\phi_2\tau^2+C_6\Lambda_t.
\end{aligned}
\end{eqnarray}

For this, we first apply the divergence theorem to the left had side of (\ref{011001}):
\begin{eqnarray}\lab{090531}
\begin{aligned}
(2\iota-3\iota^2)\int_{{U_t}(\frac 1 2)}| \gamma| ^2\phi_3\tau^2 \leq&
2\int_{{U_t}(\frac 1 2)}| \dbar^\ast\gamma|
^2\phi_2\tau^2+C_6\Lambda_t.
\end{aligned}
\end{eqnarray}
This inequality implies (\ref{090528}) since when $\iota<\frac 1 3$,
$2\iota-3\iota^2>0$.

Our third step is to prove
\begin{equation}\lab{step3}
\int_{{U_t}(\frac 1 2)}| \dbar^\ast\gamma| ^2\phi_2\tau^2\leq
C_7\int_{X_t[\frac 1 8]}(| \gamma| ^2+\mid
\partial\Phi^{1,3}\mid ^2)).
\end{equation}

To achieve this, we write
\begin{eqnarray}\lab{090500}
\begin{aligned}
2\int_{{U_t}(\frac 1 2)}| \dbar^\ast\gamma| ^2\phi_2\tau^2
=&2\text{Re}\int_{{U_t}(\frac 1
2)}\langle\dbar^\ast_{\phi_2}\gamma,\dbar^\ast\gamma\rangle\phi_2\tau^2\\
+&2\text{Re}\int_{{U_t}(\frac 1 2)}\langle
\ast(\partial\log\phi_2\wedge\ast\gamma),\dbar^\ast\gamma
\rangle\phi_2\tau^2. \end{aligned}
\end{eqnarray}
The first term after the equal sign in (\ref{090500}) is bounded from above by
$$
2\text{Re}\int_{{U_t}(\frac 1 2)}\langle
\gamma,\dbar\dbar^\ast\gamma\rangle\phi_2\tau^2+C_8\Lambda_t,
$$
which is bounded from above by
$$\leq 2b\int_{{U_t}(\frac 1 2)}| \gamma|
^2\phi_3\tau^2+\frac{1}{2b}\int_{{U_t}(\frac 1 2)}|
\dbar\dbar^\ast\gamma| ^2\phi_4\tau^2+C_8\Lambda_t,
$$
for some $b>0$, and for $\phi_4=\bbr^{-2\iota+\frac 4 3}$ and
$\phi_3=\bbr^{-2\iota-\frac 4 3}$. By (\ref{0403}), the second
item after the equal sign in (\ref{090500}) is bounded by
\begin{eqnarray*}
\begin{aligned}
\leq \int_{{U_t}(\frac 1 2)}| \dbar^\ast\gamma| ^2\phi_2\tau^2 +\frac
3 2 \iota^2\int_{{U_t}(\frac 1 2)}| \gamma| ^2\phi_3\tau^2.
\end{aligned}
\end{eqnarray*}
Therefore (\ref{090500}) implies
\begin{equation*}\lab{090534}
\int_{{U_t}(\frac 1 2)}| \dbar^\ast\gamma| ^2\phi_2\tau^2 \leq (\frac
3 2 \iota^2+2b)\int_{{U_t}(\frac 1 2)}| \gamma| ^2\phi_3\tau^2 +\frac
1 {2b} \int_{{U_t}(\frac 1 2)}| \dbar\dbar^\ast\gamma|
^2\phi_4\tau^2+C_8\Lambda_t.
\end{equation*}

Inserting (\ref{090528}) into the above inequality and simplifying, we obtain
\begin{equation}\lab{090536}
\frac{\iota(2-6\iota)-4b}{\iota(2-3\iota)} \int_{{U_t}(\frac 1 2)}|
\dbar^\ast\gamma| ^2\phi_2\tau^2 \leq \frac 1 {2b} \int_{{U_t}(\frac 1
2)}| \dbar\dbar^\ast\gamma| ^2\phi_4\tau^2+ C_9\Lambda_t.
\end{equation}
Since $\iota<\frac 1 3$, for any given $\iota$ we can choose $b$ such
that $\iota(1-3\iota)-2b>0$. Then the above inequality implies
\begin{equation}\lab{090537}
\int_{{U_t}(\frac 1 2)}| \dbar^\ast\gamma| ^2\phi_2\tau^2 \leq
\frac{\iota(2-3\iota)}{2b\bigl(\iota(1-3\iota)-2b\bigr)}
\int_{{U_t}(\frac 1 2)}| \dbar\dbar^\ast\gamma|
^2\phi_4\tau^2+C_{10}\Lambda_t.
\end{equation}

Finally, we need to estimate $\int_{{U_t}(\frac 1 2)}|
\dbar\dbar^\ast\gamma| ^2\phi_4\tau^2$. Since the CO-matric is
K\"ahler and $\partial\gamma=\dbar\gamma=0$, then
$\partial\partial^\ast\gamma=\dbar\dbar^\ast\gamma$. When restricted
to ${U_t}(\frac 1 2)$,
$\dbar\dbar^\ast\partial\partial^\ast\gamma=-\partial\Phi^{1,3}=0$.
From these identities, {since $0<\iota<\frac 1 3$,}
\begin{equation}\lab{090538}
\begin{aligned}
&\int_{{U_t}(\frac 1
2)}| \dbar\dbar^\ast\gamma| ^2r^{-2\iota+\frac 4 3}\tau^2
\leq \int_{{U_t}(\frac 1 2)}| \dbar\dbar^\ast\gamma| ^2\tau^2\\
=&\int_{{U_t}(\frac 1 2)}\langle
\partial\partial^\ast\gamma,\dbar\dbar^\ast\gamma\rangle\tau^2
\leq \int_{{U_t}(\frac 1
2)}\langle\dbar\dbar^\ast\partial\partial^\ast\gamma,\gamma\rangle\tau^2+C_{11}\Lambda_t=C_{11}\Lambda_t.
\end{aligned}
\end{equation}
Combining the above two inequalities, we prove the inequality \eqref{step3}.
This completes our third step.

In the end, we insert (\ref{step3}) into (\ref{090528}), insert
(\ref{090528}) and (\ref{step3}) into (\ref{011003}); we get
(\ref{090504}). This proves Lemma \ref{est}.
\end{proof}

\begin{proof}[Proof of Lemma \ref{est2}] The proof is parallel to that of
the previous Lemma, except that in Lemma \ref{est} the form $\gamma$
is a $(2,3)$-form while in this Lemma $\partial^\ast\gamma$ is a
$(1,3)$-form. Replacing $\gamma$ by $\partial^\ast\gamma$, we find
that all inequalities up to (\ref{090537}) are valid. So to prove
Lemma \ref{est2} we only need to estimate $\int_{{U_t}(\frac 1 2)}|
\dbar\dbar^\ast\partial^\ast\gamma| ^2\bbr^{-2\iota+\frac
43}\tau^2$. Since
$\dbar\dbar^\ast\partial^\ast\gamma=\partial^\ast\partial\partial^\ast\gamma$,
{by the same method in proving (\ref{090538})}, we get
\begin{equation*}
\int_{{U_t}(\frac 1 2)}| \dbar\dbar^\ast\partial^\ast\gamma|
^2\bbr^{-2\iota+\frac 4 3}\tau^2 \leq C_{12}\int_{X_t[\frac 1 8]}(|
\gamma| ^2+| \partial\Phi^{1,3}| ^2).
\end{equation*}
This proves Lemma \ref{est2}.
\end{proof}

\begin{appendix}
\section{The geometry of Candelas-de la Ossa's metrics}

We first recall some notations from Candelas-de la Ossa's paper
\cite{CO}. We consider the family $V_t$:
\begin{equation*}
V_t=\{(w_1,\cdots,w_4)\mid \sum_{i=1}^4(w_i)^2= t\}\sub\CC^4.
\end{equation*}
Since the individual $V_t$ only depend on $|t|$, in the following we shall work with $t>0$.
We let
$r^2=\sum_{i=1}^4|  w_i| ^2$
be the radial coordinate. We set
\begin{equation*} \lab{6004}
\tilde\omega_{\co,t}=i\partial\bar\partial f_t(r^2)
\end{equation*}
The condition that the metric be Ricci-flat is
\begin{equation}\lab{6005}
r^2(r^4- t^2)(\eta_t^3)'+3 t^2\eta_t^3=2r^8\quad \text{with}\quad \eta_t(r^2)=r^2 f'_{ t}(r^2).
\end{equation}
The scale has been chosen so $\eta_t$ has the same asymptotic
behavior as $r^{\frac 4 3}$ for large $r$. After setting
\begin{equation*}
r^2= t\cosh \tau, \ \  {\text{for}\ \  \tau\geq 0}
\end{equation*}
and integrating, we pick the solution
\begin{equation}\lab{6007}
\eta_t=\frac {2^{-1/3} t^{2/3}}{\tanh\tau}(\sinh 2\tau-2\tau)^{1/3}.
\end{equation}
Note that this choice of $\eta_t$ makes the metric regular at $r^2=
t$. {Also note that from (\ref{6005}),
$f'_t(s)=s^{-1}\eta_t(s)$, and that $f_t(s)$
defined in (\ref{pot}) is a solution of this equation.}

In this appendix, we want to estimate the curvature of the CO
metric. Since it is homogeneous (see \cite{CO}), we only need to
perform our calculation at points $q=(\frac{\sqrt{r^2-
t}}{\sqrt{2}},\frac{i\sqrt{r^2- t}}{\sqrt{2}},0, t^{\frac 1 2})$. At
first we pick some orthogonal coordinate at this point. {Since
$dw_1\wedge dw_2\wedge dw_3\ne 0$ near $q$, we can  take
$(w_1,w_2,w_3)$  as a (holomorphic) coordinate in a neighborhood of the
point $q$.} By directly
calculation, we get
$$
\partial\bar\partial r^2|_q=\frac{r^2+ t}{2 t}\bl dw_1\wedge d\bar
w_1+dw_2\wedge d\bar w_2\br+dw_3\wedge d\bar w_3
+i\frac {r^2- t}{2 t}\bl dw_2\wedge d\bar w_1- dw_1\wedge d\bar
w_2\br
$$
and
$$
\partial r^2\wedge \bar\partial r^2|_q =2(r^2- t)dw_2\wedge d\bar
w_2.
$$

To simplify them, we introduce a new coordinate $(u_1,u_2,u_3)$ at the point $q$:
\begin{equation*}
w_1=\frac{2 t}{r_q^2+ t}u_1-i\frac{r_q^2- t}{r_q^2+ t}u_2,\quad
w_2=u_2,\quad w_3=u_3,
\end{equation*}
where $r^2_q\triangleq r^2(q)$. Under this coordinate, the
$\partial\bar\partial r^2$ and $\partial r^2\wedge \bar\partial r^2$
are expressed as
\begin{equation}\lab{0410}
\partial\bar\partial r^2|_q=\frac{2 t}{r^2+ t}du_1\wedge d\bar
u_1+\frac{2r^2}{r^2+ t}du_2\wedge d\bar u_2+du_3\wedge d\bar
u_3
\end{equation}
and
$$
\partial r^2\wedge \bar\partial r^2|_q=2(r^2- t)du_2\wedge d\bar
u_2.
$$
Combined with (\ref{6005}),
\begin{equation}\lab{6019}
f_t'+r^2f_t''=\eta_t'=\frac{2r^8-3 t^2\eta_t^3}{3r^2(r^4-
t^2)\eta_t^2};
\end{equation}
so at the point $q$ the CO  metric is
\begin{equation*}
i\partial\bar\partial f_t|_q = \frac{2 t\eta_t}{r^2(r^2+
t)}idu_1\wedge d\bar u_1+\frac {4r^4}{3\eta_t^2(r^2+ t)}idu_2\wedge
d\bar u_2+\frac {\eta_t}{r^2}idu_3\wedge d\bar u_3.
\end{equation*}

At last we introduce a new coordinate $(z_1,z_2,z_3)$ near the point
$q$ as:
\begin{equation*}
z_1=\Bigl(\frac {2 t\eta_t(q)}{r^2_q(r^2_q+ t)}\Bigr)^{\frac 1
2}u_1,\quad z_2=\Bigl(\frac{4r_q^4}{3\eta_t^2(q)(r^2_q+
t)}\Bigr)^{\frac 1 2}u_2,\quad
z_3=\Bigl(\frac{\eta_t(q)}{r^2_q}\Bigr)^{\frac 1 2}u_3.
\end{equation*}
The CO metric at this point is then expressed as
\begin{equation*}
i\partial\dbar f_t(r^2)|_q=i\sum_{j=1}^3dz_j\wedge d\bar z_j.
\end{equation*}
Under this coordinate, we can rewrite (\ref{0410}) as
$$
\partial\dbar r^2|_q=(r^2)^{\frac 1
3}\Bigl(\frac{r^4}{\eta_t^3}\Bigr)^{\frac 1 3}\Bigr(dz_1\wedge d\bar
z_1+\frac 3 2\frac{\eta_t^3}{r^4}dz_2\wedge d\bar z_2+dz_3\wedge d\bar z_3\Bigr)
$$
and
$$
\partial r^2\wedge \dbar r^2|_q=\frac 3 2(r^2)^{\frac 4
3}\Bigl(\frac{\eta_t^3}{r^4}\Bigr)^{\frac 2 3}\Bigl(1-\frac
{t^2}{r^4}\Bigr)dz_2\wedge d\bar z_2.
$$

To estimate the curvature of the CO metric, we need to investigate the asymptotic behavior of
$\frac{\eta_t^3}{r^4}$.

\begin{lemm} Over $[t,+\infty)$, the function
$r^{-4}{\eta_t^3}$ is an increasing function and
\begin{equation*} \lab{6030}
\lim_{r^2\to  t}r^{-4}{\eta_t^3}=\frac 2 3,\quad \lim_{r^2\to
\infty}r^{-4}{\eta_t^3}=1.
\end{equation*}
\end{lemm}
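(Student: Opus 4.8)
The plan is to convert the claim into an elementary inequality for one hyperbolic function. Recall from the derivation preceding \eqref{6007} that the radial variable is parametrized by $r^2 = t\cosh\tau$ with $\tau\in[0,\infty)$, and that $\tau\mapsto r^2$ is a strictly increasing bijection onto $[t,\infty)$; so it suffices to regard $r^{-4}\eta_t^3$ as a function of $\tau$. Substituting the closed form \eqref{6007} gives $\eta_t^3 = \tfrac12 t^2(\tanh\tau)^{-3}(\sinh 2\tau - 2\tau)$, while $r^4 = t^2\cosh^2\tau$; the factors $t^2$ cancel, and a short simplification using $\sinh 2\tau = 2\sinh\tau\cosh\tau$ and $\tanh^3\tau\,\cosh^2\tau = \sinh^3\tau/\cosh\tau$ yields
\[
\frac{\eta_t^3}{r^4} \;=\; \frac{(\sinh\tau\cosh\tau - \tau)\cosh\tau}{\sinh^3\tau} \;=\; 1 + \frac{\sinh\tau - \tau\cosh\tau}{\sinh^3\tau} \;=:\; g(\tau),
\]
where the second equality uses $\cosh^2\tau - \sinh^2\tau = 1$. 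Thus it remains to show that $g$ extends continuously to $\tau=0$ with $g(0)=\tfrac23$, that $g(\tau)\to 1$ as $\tau\to\infty$, and that $g$ is strictly increasing on $(0,\infty)$.

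The two limits are immediate. As $\tau\to 0^+$, Taylor expansion gives $\sinh\tau - \tau\cosh\tau = -\tfrac13\tau^3 + O(\tau^5)$ and $\sinh^3\tau = \tau^3 + O(\tau^5)$, so $g(\tau)\to 1-\tfrac13 = \tfrac23$; since $\tau=0$ corresponds to $r^2=t$, this is the first boundary value. As $\tau\to\infty$, the numerator $\sinh\tau - \tau\cosh\tau$ grows like $-\tfrac12\tau e^{\tau}$ while $\sinh^3\tau$ grows like $\tfrac18 e^{3\tau}$, so the fraction tends to $0$ and $g(\tau)\to 1$; since $\tau\to\infty$ corresponds to $r^2\to\infty$, this is the second boundary value.

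For the monotonicity I would differentiate $g$ directly. Writing $N(\tau) = \sinh\tau - \tau\cosh\tau$ (so $N' = -\tau\sinh\tau$) and $D(\tau) = \sinh^3\tau$, the quotient rule gives $g'(\tau) = (N'D - ND')/D^2$, whose numerator after cancelling the common factor $\sinh^2\tau$ equals $3\tau\cosh^2\tau - \tau\sinh^2\tau - 3\sinh\tau\cosh\tau$. Using $3\cosh^2\tau - \sinh^2\tau = 2 + \cosh 2\tau$ and $\sinh\tau\cosh\tau = \tfrac12\sinh 2\tau$, and substituting $x = 2\tau$, this equals $\tfrac12 H(x)$ with $H(x) = x\cosh x + 2x - 3\sinh x$, so $g'(\tau) = H(2\tau)/(2\sinh^4\tau)$. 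Hence everything reduces to $H(x)>0$ for $x>0$, which follows by repeated differentiation: $H(0)=H'(0)=H''(0)=0$ and $H'''(x) = x\sinh x > 0$ for $x>0$, so successively $H''>0$, $H'>0$, and $H>0$ on $(0,\infty)$. Therefore $g'>0$ on $(0,\infty)$, giving the asserted monotonicity and completing the proof. The only step requiring a small idea is recognizing that the hyperbolic substitution makes $\eta_t^3/r^4$ the $t$-independent function $g(\tau)$; everything after that is mechanical, with the positivity of $H$ as the one nontrivial computation, and it is routine, so I do not anticipate a real obstacle.
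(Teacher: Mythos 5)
Your proof is correct and follows essentially the same route as the paper: after the substitution $r^2=t\cosh\tau$ both arguments reduce the derivative to the same expression $H(2\tau)/(2\sinh^4\tau)$ (the paper's $h_1(\tau)$ is exactly your $H(2\tau)$), and both compute the boundary values by elementary asymptotics. The only cosmetic difference is that you establish $H>0$ via $H(0)=H'(0)=H''(0)=0$ and $H'''(x)=x\sinh x>0$, whereas the paper expands $h_1'$ in a power series; these are interchangeable.
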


\begin{proof}
 Let $h(\tau)=r^{-4}{\eta_t^3}$. From (\ref{6007}),
\begin{eqnarray*}
h(\tau)=\frac 1 2 \frac{\cosh\tau(\sinh2\tau-2\tau)}{\sinh^3\tau}.
\end{eqnarray*}
Differentiating,
\begin{equation*}
h'(\tau)=\frac 1 {2\sinh ^4\tau}h_1(\tau)\quad\text{where}\quad
h_1(\tau)=4\tau+e^{2\tau}(\tau- 3/ 2)+e^{-2\tau}(\tau+3/2)
\end{equation*}
and
\begin{equation*}
h_1'(\tau)=2\tau e^{2\tau}-2e^{2\tau}-2\tau
e^{-2\tau}-2e^{-2\tau}+4.
\end{equation*}
Thus for any $\tau>0$,
\begin{equation*}
h_1'(\tau)=4\tau\sum_{n=1}^{\infty}\frac{(2\tau)^{2n+1}}{(2n+1)!}\cdot\frac
n {n+1}>0.
\end{equation*}
Since $h_1(0)=0$, $h_1(\tau)>0$ and $h'(\tau)>0$. So over
$[0,+\infty)$, the function $r^{-4}{\eta_t^3}$ is a increasing
function of $\tau$. Since $r^2=t\cosh\tau$ for $\tau\geq 0$ is an
increasing function in $\tau$, $r^{-4}\eta^3$ is increasing in
$r^2$. Since  $\tau\to 0$ when $r^2\to t$, and $\tau\to \infty$ when
$r^2\to\infty$, we obtain the two desired limits by applying the
L'Hospital rule. This proves the Lemma.
\end{proof}

We next investigate $\eta_t'$. From $\eta_t^3=r^4 h(\tau)$,
\begin{equation*}
3\eta_t^2\eta_t'=2r^2h(\tau)+r^4h'(\tau)\frac{d\tau}{dr^2}=2r^2h(\tau)+
t^{-1}r^4h'(\tau)\sinh^{-1}\tau>0,
\end{equation*}
hence $r^{\frac 2 3}\eta_t'>0$. On the other hand by (\ref{6019}),
we get
\begin{equation*}
r^{\frac 2 3}\eta_t'=\frac{2-3\frac{
t^2}{r^4}\frac{\eta_t^3}{r^4}}{3(1-\frac{
t^2}{r^4})(\frac{\eta_t^3}{r^4})^{\frac 2
3}}=\bigl(\frac{\eta_t^3}{r^4}\bigr)^{\frac 1
3}+\frac{2-3\frac{\eta_t^3}{r^4}}{3(1-\frac
{t^2}{r^4})(\frac{\eta_t^3}{r^4})^{\frac 2 3}}.
\end{equation*}
Then from Lemma \ref{6030}, we see that
$$0<r^{\frac 2 3}\eta_t'<1.
$$

In the following for two functions $\alpha(r,t)$ and $\beta(r,t)$ in $r$ and $t$
we shall use $\alpha\lesssim \beta$  to mean that
there is a
constant $C$ independent on $r$ and $ t$ such that
\begin{equation*}
|  \alpha(r,t)| \leq C|  \beta(r,t)| .
\end{equation*}
Under this convention, the previous Lemma and the last inequality can be abbreviated as
\begin{equation*}
\eta_t\lesssim r^{\frac 4 3} \and  \eta_t'\lesssim  r^{-\frac 2 3}.
\end{equation*}
For higher derivatives, by introducing $\epsilon=\frac { t}{r^2}$, the identities (\ref{6005}) and
(\ref{6019}) imply
\begin{equation*}
\eta_t''\lesssim  r^{-\frac 8 3}(1-\epsilon)^{-1}, \quad
\eta_t^{(3)}\lesssim  r^{-\frac {14} 3}(1-\epsilon)^{-2}.
\end{equation*}
Therefore, by the second identity of (\ref{6005}), we obtain the following asymptotic estimates
\begin{equation*}
f'_t\lesssim  r^{-\frac 2 3}, \quad f_t ''\lesssim  r^{-\frac 8 3}, \quad f_{
t}^{(3)}\lesssim  r^{-\frac {14} 3}(1-\epsilon)^{-1} \and f_{
t}^{(4)}\lesssim  r^{-\frac {20}{3}}(1-\epsilon)^{-2}.
\end{equation*}

To proceed, we need to the partial derivatives of $r^2$ with respect to $z_i$ and $\bar z_i$. For
simplicity, we shall use the subscript $i$ to denote the partial derivative with respect to $z_i$,
and use $\bar i$ for derivatives with respect to $\bar z_i$. Thus, for instance, $\frac{\partial^2
r^2}{\partial z_i\bar\partial z_j}=(r^2)_{i\bar j}$.

Under this convention, we compute directly that at
the point $q$, the first order partial derivatives
\begin{equation}\lab{0529}
(r^2)_1=(r^2)_3=0\and (r^2)_2=-\frac{\sqrt{6}}{2}i(r^2- t)^{\frac 1
2}(r^2+ t)^{\frac 1 2}\frac{\eta_t}{r^2}\lesssim r^{\frac 4
3}(1-\epsilon)^{\frac 1 2};
\end{equation}
the second order derivatives $(r^2)_{i\bar j}=0$ except the following
\begin{equation}\lab{0530}
(r^2)_{1\bar 1}=(r^2)_{3\bar 3}
=\frac{r^2}{\eta_t}\lesssim  r^{\frac 2 3},\quad
(r^2)_{2\bar 2}=\frac 3 2
\frac{\eta_t^2}{r^2}\lesssim  r^{\frac 2 3};
\end{equation}
the second derivatives $(r^2)_{i j}=0$ except the following
\begin{equation*}\lab{0531}
(r^2)_{11}=(r^2)_{33}=-\frac{r^2}{\eta_t}\lesssim  r^{\frac 2
3},\quad (r^2)_{22}=-\frac 3 2 \frac{\eta_t^2}{r^2}\epsilon \lesssim
r^{\frac 2 3}.
\end{equation*}
For the third order partial derivatives of type $i\bar j k$, we have
the vanishing $(r^2)_{i\bar j k}=0$ except the following
\begin{eqnarray*}
\begin{aligned}
&(r^2)_{1\bar 1 1}=\frac{r^{3}(1-\epsilon)^{\frac 1 2}}{ t^{\frac 1
2}\eta_t^{\frac 3 2}(1+\epsilon)^{\frac 1 2}} \lesssim
\epsilon^{-\frac 1 2}(1-\epsilon)^{\frac 1 2},\quad (r^2)_{1\bar 2
1}=-i\frac{\sqrt{6}}{2}\frac{(1-\epsilon)^{\frac 1
2}}{(1+\epsilon)^{\frac 1 2}} \lesssim (1-\epsilon)^{\frac 1
2},\\
&(r^2)_{2\bar 1 2}=\frac{3}{2}\frac{ t^{\frac 1 2}\eta_t^{\frac 3
2}(1-\epsilon)^{\frac 1 2}}{r^3(1+\epsilon)^{\frac 1 2}} \lesssim
\epsilon^{\frac 1 2}(1-\epsilon)^{\frac 1 2},\quad\ \ \ (r^2)_{2\bar
2 2}=-i\frac{3\sqrt{6}}{4}\frac{ t\eta_t^3(1-\epsilon)^{\frac 1
2}}{r^6(1+\epsilon)^{\frac 1 2}} \lesssim   \epsilon
(1-\epsilon)^{\frac 1 2},
\\
& (r^2)_{3\bar 1 3}=\frac{r^3(1-\epsilon)^{\frac 1 2}}{ t^{\frac 1
2}\eta_t^{\frac 3 2}(1+\epsilon)^{\frac 1 2}}\lesssim
\epsilon^{-\frac 1 2}(1-\epsilon)^{\frac 1 2},\quad \ \ \
(r^2)_{3\bar 2 3}=-i\frac{\sqrt{6}}{2}\frac{(1-\epsilon)^{\frac 1
2}}{(1+\epsilon)^{\frac 1 2}}\lesssim  (1-\epsilon)^{\frac 1 2}.
\end{aligned}
\end{eqnarray*}
For the fourth order partial derivatives, we still have the
vanishing except the following
\begin{eqnarray*}
\begin{aligned}
&(r^2)_{1\bar 1 1 \bar 1}=\frac{r^4}{ t\eta_t^2}\lesssim  r^{\frac 4
3} t^{-1},\ \ \ (r^2)_{1\bar 2 1\bar 2}=(r^2)_{2\bar 12\bar
1}=\frac{3\eta_t}{2r^2}\lesssim r^{-\frac 2
3},\\
&(r^2)_{2\bar 2 2\bar 2}=\frac{9 t\eta_t^4}{4r^8}\lesssim tr^{-\frac
8 3},\ \ \ (r^2)_{1\bar 3 1\bar 3}=(r^2)_{3\bar 13\bar
1}=\frac{r^4}{ t\eta_t^2}\lesssim r^{\frac 4 3}
t^{-1},\\
&(r^2)_{3\bar 33\bar 3} =\frac{r^4}{ t\eta_t^2}\lesssim  r^{\frac 4
3} t^{-1},\ \ \  (r^2)_{2\bar 32\bar 3}=(r^2)_{3\bar 23\bar
2}=\frac{3\eta_t}{2r^2}\lesssim r^{-\frac 2 3}.
\end{aligned}
\end{eqnarray*}

We now use these asymptotic estimate of the partial derivatives of $r^2$
to prove  Lemma \ref{cur}.  {Since $(z_1,z_2,z_3)$ is the orthogonal coordinate at the point $p$, we only need to prove
that {\sl there is a constant $C$ independent of $t$ and $r$ such that the
curvature tensor $R_{i\bar j k\bar l}$ of the CO metric}
$\tilde\omega_{\co,t}$ at $q$ has bound
\begin{equation}\lab{bound-1}
R_{i\bar jk\bar l}\lesssim r^{-\frac  4 3}.
\end{equation}

The curvature at $q$ has the form}
\begin{equation*}
R_{i\bar j k\bar l}=-(f_t)_{i\bar jk\bar l}+(f_t)_{ik\bar
q}(f_t)_{q\bar j\bar l}.
\end{equation*}
One group of terms appearing in $(f_t)_{i\bar jk\bar l}$ are of the
type
$$
f_t^{(4)}\cdot(r^2)_i (r^2)_{\bar j} (r^2)_{k} (r^2)_{\bar l},\quad
f_t^{(3)}\cdot
\sum
(r^2)_{i_1 i_2} (r^2)_{i_3} (r^2)_{ i_4},\quad f_t''\cdot
\sum(r^2)_{i_1 i_2} (r^2)_{i_3 i_4},
$$
where the summations are taken over all possible permutation
$\{i_1,i_2,i_3,i_4\}=\{i,\bar j, k,\bar l\}$. For such type of
terms, using the previous estimate, we check directly that they are
bounded by $C r^{-\frac{4}{3}}$.

The other group of terms in appearing $(f_t)_{i\bar j k\bar l}$ are
of the type:
\begin{align}
&f_t'\cdot (r^2)_{i\bar jk\bar l}\\
&f_t''\cdot \bigl((r^2)_{i\bar j k} (r^2)_{\bar l}+(r^2)_{i\bar l
k}(r^2)_{\bar j}\bigr),\\
&f_t''\cdot\bigl((r^2)_{\bar j k\bar l}(r^2)_{i}+(r^2)_{\bar j i\bar
l}(r^2)_{k}\bigr).
\end{align}
Of these, (A.8) vanishes when $i\not=k$ or $j\not=l$, (A.9) vanishes
when $i\not=k$ and (A.10) vanishes when $j\not=l$. The remaining
cases in (A.8)-(A.10) may be not vanishing, and will be treated
separatly momentarily.

We now look at the product term $(f_t)_{ik\bar q}(f_t)_{q\bar j\bar
l}$. First in the expression of $(f_ t)_{ik\bar q}$, the following
two types of terms
\begin{equation*}
f_t^{(3)}\cdot (r^2)_i(r^2)_k (r^2)_{\bar q} \and f_t''\cdot
\sum_{i_1,i_2,i_3}(r^2)_{i_1i_2}(r^2)_{i_3}
\end{equation*}
are  bounded by $C r^{-\frac{2}{3}}$; therefore corresponding
product terms
\begin{equation*}
\begin{aligned}
&\Bigl(f_t^{(3)}\cdot (r^2)_i(r^2)_k (r^2)_{\bar q}+f_t''\cdot
\sum_{i_1,i_2,i_3}(r^2)_{i_1i_2}(r^2)_{i_3}\Bigr)\\
\times& \Bigl(f_t^{(3)}\cdot (r^2)_{\bar j}(r^2)_{\bar l} (r^2)_{
q}+f_t''\cdot \sum_{j_1,j_2,j_3}(r^2)_{j_1j_2}(r^2)_{j_3}\Bigr)
\end{aligned}
\end{equation*}
in the expansion of $(f_t)_{ik\bar q}(f_t)_{q\bar j\bar l}$ are also
bounded by $Cr^{-\frac 4 3}$. Here the summations are over all
possible permutation $\{i_1,i_2,i_3\}=\{i,k,\bar q\}$ and
$\{j_1,j_2,j_3\}=\{\bar j,\bar l, q\}$.

The remainding  terms in $(f_t)_{ik\bar q}(f_t)_{q\bar j\bar l}$ are
of the following types:
\begin{align}
 & (f_t')^2\cdot (r^2)_{ik\bar q}(r^2)_{\bar j\bar l q},\\
&f_t'\cdot(r^2)_{ik\bar q}\cdot
\bigl(f_t''\sum(r^2)_{i_1i_2}(r^2)_{i_3}+f_ t^{(3)}\cdot (r^2)_{\bar
j}(r^2)_{\bar l}(r^2)_q\bigr),\\
&f_t'\cdot (r^2)_{\bar j\bar l q}\cdot \bigl(f_t''\sum
(r^2)_{i_1i_2}( r^2)_{i_3}+f_ t^{(3)}\cdot
(r^2)_{i}(r^2)_{k}(r^2)_{\bar q}\bigr),
\end{align}
where the summation in the second line is taken over all possible
permutation $\{i_1,i_2,i_3\}=\{\bar j,\bar l,q\}$ and summation in
the last line is taken over all possible permutation
$\{i_1,i_2,i_3\}=\{i,k,\bar q\}$. Like before, they vanish when
$i\not=k$ in case (A.11) and (A.12) or when $j\not=l$ in case (A.11)
and (A.13).

Combining the above discussion, we see that when $i\not= k$ and
$j\not= l$, the bound \eqref{bound-1} follow immediately. For
others, we need to treat case by case.

For the case $i\not= k$ and $j=l$, we have
\begin{equation*}
R_{i\bar jk\bar j}\lesssim  r^{-\frac 4 3}-f''_ t\cdot (r^2)_{\bar
j\bar
j i}(r^2)_k\bigl(1-f'_t\cdot (r^2)_{i\bar i}\bigr) -f_t''\cdot (r^2)_{\bar j\bar j k} (r^2)_{i}\bigl(1-f_
t'\cdot (r^2)_{k\bar k}\bigr).
\end{equation*}
We claim that the last term is always zero. Indeed, because of
(\ref{0529}), if $i\not= 2$, then the last term is equal to zero. If
$i=2$, then $k\not=2$, and by (\ref{6005}) and (\ref{0530}) we have
$1-f_t'\cdot (r^2)_{k\bar k}=0$. This proves the claim that the last
item always vanishes. For the same reason, the second item vanishes.
Therefore, when $i\ne k$ and $j=l$, the estimate \eqref{bound-1}
holds.

For the case $i=k$ and $j\not= l$, we have $R_{i\bar ji\bar
l}=\overline{R_{j\bar i l\bar i}}\lesssim  r^{-\frac 4 3}$. This
proves the bound \eqref{bound-1} in this case.

Finally, we need to consider the cases $i=k$, $j=l$ and $i\not =j$. We should consider
these cases individually. In case $i\bar j k\bar l=1\bar 3 1\bar 3$, we have
\begin{equation*}
\begin{aligned}
R_{1\bar 31\bar 3}\lesssim  &r^{-\frac 4 3}-f'_{ t}\cdot
(r^2)_{1\bar 31 \bar 3}+(f'_t)^2\cdot (r^2)_{11\bar 1}(r^2)_{\bar
3\bar 31}\\
& +\bigl(f_ t'\cdot (r^2)_{11\bar 2}+f_ t''\cdot
(r^2)_{11}(r^2)_{\bar 2}\bigr)\cdot \bigl(f_ t'\cdot (r^2)_{\bar
3\bar 3 2}+f_ t''\cdot (r^2)_{\bar 3\bar 3}(r^2)_{2}\bigr).\\
\end{aligned}
\end{equation*}
The last term is clearly $\lesssim $ $r^{-\frac 4 3}$; the second and  third items combined give
\begin{equation*}
-f'_{ t}\cdot (r^2)_{1\bar 31 \bar 3}+(f'_t)^2\cdot (r^2)_{11\bar
1}(r^2)_{\bar 3\bar 31}=-f_t'\frac{r^4}{ t\eta_t^2}+(f'_t)^2
\frac{r^6(r^2- t)}{ t\eta_t^3(r^2+ t)} =\frac{-2r^2}{\eta_t(r^2+
t)}\lesssim  r^{-\frac 4 3}.
\end{equation*}
This proves the bound \eqref{bound-1} for $
R_{1\bar 31\bar 3}$.
By similar method, we obtain desired bound \eqref{bound-1} for
$R_{1\bar 2 1\bar 2}$ and $R_{2\bar 3 2\bar 3}$.

Finally, we consider the case $i=j=k=l$. Since the metric is Ricci-flat, we have
\begin{equation*}
R_{i\bar i i\bar i}=-\sum_{j\not =i}R_{i\bar i j\bar j}\lesssim
r^{-\frac 4 3}.
\end{equation*}
This completes the proof of Lemma \ref{cur}.

\begin{rema}
We remark that when $r\to  t$, the induced metric on the surface $r^2=\text{const.}$ approaches
\begin{equation*}
\frac  1 2 \Bigl(\frac{2 t^2}{3}\Bigr)^{\frac 1 3}ds^2|_{S^3},
\end{equation*}
where $ds^2|_{S^3}$ is the standard metric on $S^3$. The curvature
of the limiting metric is $C t^{-\frac 2 3}$ for some constant $C$.
\end{rema}
\end{appendix}

\end{document}